\documentclass[preprint,review]{elsarticle}


\usepackage{lineno}

\usepackage{amsmath,amssymb}
\usepackage{stmaryrd}
\usepackage{enumerate}
\usepackage{algorithm}
\usepackage[noend]{algpseudocode}
\usepackage{calrsfs}

\usepackage{graphicx}
\usepackage{amssymb}
\usepackage{array}
\usepackage{amsmath}
\usepackage{amsthm}
\usepackage{epstopdf}
\usepackage{epsfig}
\usepackage{subfigure}
\usepackage{pstricks}
\usepackage{fancyheadings}

\usepackage{stackengine}

\usepackage[margin=1.0in]{geometry}
\DeclareMathAlphabet{\pazocal}{OMS}{zplm}{m}{n}

\usepackage{graphicx}

\usepackage{stackengine}

\makeatletter
\def\BState{\State\hskip-\ALG@thistlm}
\makeatother

\newcommand{\lJump}{[\![}
\newcommand{\rJump}{]\!]}

\newtheorem{theorem}{Theorem}

\newtheorem{remark}{Remark}
\newtheorem{definition}{Definition}

\begin{document}
\begin{frontmatter}
\title{A new discontinuous Galerkin  method  for elastic waves with physically motivated numerical fluxes}
\author[label1,label2,label4]{Kenneth Duru}
\author[label3]{Leonhard Rannabauer }
\author[label2]{Alice-Agnes Gabriel}
\author[label2]{Heiner Igel}

 \address[label1]{Mathematical Sciences Institute, The Australian National University, Canberra, Australia}
  \address[label2]{Department of Geophysics, Ludwig-Maximilian University, Munich, Germany}
   \address[label3]{Technical University of Munich, Germany}
  \address[label4]{Corresponding author: kenneth.duru@anu.edu.au}

  \pagenumbering{arabic}
\begin{abstract}

The discontinuous Galerkin  (DG) method  is an established method for computing  approximate solutions of partial differential equations in many applications. Unlike continuous finite elements, in DG methods, numerical fluxes  are  used to enforce inter-element  conditions, and internal and external physical boundary conditions.  However, for certain problems   such as elastic wave propagation in complex media,  where several wave types and wave speeds are simultaneously present, a standard numerical flux may not be compatible with the physical boundary conditions. If surface or interface waves are present, this incompatibility may lead to numerical instabilities.  We present a stable and arbitrary order accurate DG method for elastic waves with a physically motivated numerical flux. Our numerical flux is compatible with all well-posed, internal and external, boundary conditions, including linear and nonlinear  frictional constitutive equations for modelling spontaneously propagating shear ruptures in elastic solids and dynamic earthquake rupture processes. 

First, we generate boundary or interface data by solving a Riemann-like problem constrained against  the physical conditions acting at internal or external element boundaries.
Second, we penalise the data on the boundary against incoming characteristics. Third, we construct a flux fluctuation vector obeying the eigen-structure of the underlying PDE. Finally, we append the flux fluctuation vector to the discretized PDE with physically motivated penalty weights.

 By construction our choice of penalty parameters yield an upwind scheme and a discrete energy estimate analogous to the continuous energy estimate. The  spectral radius of the resulting spatial operator has an upper bound which is independent of the boundary and interface conditions, thus it is suitable for efficient explicit time integration. We present numerical experiments in one and two space dimensions verifying high order accuracy and asymptotic numerical stability, and demonstrating potentials for modelling complex nonlinear frictional problems in elastic solids.
\end{abstract}
\begin{keyword}
elastic wave equation \sep first order systems \sep boundary conditions \sep interface conditions \sep stability \sep discontinuous Galerkin method \sep  spectral method \sep penalty method.
\end{keyword}

\end{frontmatter}
\section{Introduction}\label{sec:introduction}
High order accurate and explicit time-stable solvers are well suited  for hyperbolic wave propagation problems. See, for example, the pioneering work by Kreiss and Oliger \cite{KreissOliger1972}.   However, because  of the complexities of real geometries, internal interfaces, nonlinear boundary/interface conditions and the presence of disparate spatial and temporal scales present in real media and sources, discontinuities and sharp wave fronts become  fundamental features of the solutions. Thus, in addition to high order accuracy, geometrically flexible and adaptive numerical algorithms are critical for high fidelity and efficient simulations of wave phenomena in many applications. 
The discontinuous Galerkin method (DG method)  has been demonstrated to posses  the desirable properties needed to effectively simulate wave phenomena occurring in geometrically complex and heterogeneous media \cite{HesthavenWarburton2002, delaPuenteAmpueroKaser2009,  PeltiesdelaPuenteAmpueroBrietzkeKaser2012}. Since its introduction \cite{ReedHill1973}, the DG method has been developed and analyzed for hyperbolic partial differential equations (PDEs), see for examples \cite{CockburnShu1989}--\cite{CockburnHouShu1990}, \cite{HesthavenWarburton2008}--\cite{HesthavenWarburton2002} and the references therein. The DG method combines ideas from high order finite element methods with traditional finite volume and finite difference methods, yielding local discrete operators with spectral accuracy. 
The power of DG method lies in the local nature of the spatial operators with high order accuracy, and the flexibility of the method for resolving complex geometries using unstructured and/or boundary conforming curvilinear meshes \cite{delaPuenteAmpueroKaser2009, PeltiesdelaPuenteAmpueroBrietzkeKaser2012, KoprivaGassner2014, Warburton2013, DumbserKaser2006}.  
Because of the spatial locality of the operators, DG method easily lends itself to efficient parallel numerical algorithms on modern heterogeneous high performance computing platforms \cite{Heineckeetal2014, BursteddeWilcoxGhattas2011}. DG method has been successfully applied to a variety of applied mathematics problems, and in particular to wave propagation and computational fluid dynamics  problems \cite{DumbserPeshkovRomenski, KirbyKarniadakis2005}.

 In the past decade,  DG method has   gained popularity in engineering and applied sciences, and it is increasingly becoming  attractive, as a method of choice for computing approximate solutions of PDEs in academia and industry. However, wave propagation problems often appear with nontrivial boundary conditions that are not covered by standard DG method methods. Examples include linear and nonlinear friction laws, describing earthquake rupture physics,  nonlocal transparent boundary conditions, local absorbing boundary conditions, and other dynamic boundary conditions that result from local or nonlocal coupling with differential equations on the boundary. 

In the current work, we continue the effort to develop and analyze DG method, focusing on seismological applications. We are particularly interested in reliable numerical modeling of nonlinear earthquake source processes and  high fidelity simulations of elastic waves in heterogeneous and geometrically complex solid Earth models.
Seismic waves emanating from geophysical events propagate over hundreds to thousands of kilometers interacting with tectonic forces, geological structure, complicated topography and earthquake source processes on scales down to millimeters. Exploration seismology and natural earthquake hazard mitigation increasingly rely on multi-scale (0--20 Hz) and multi-physics (non-linear rheology, fluid and heat transport, dynamic rupture sources) simulations.  The fracture mechanical description of non-linear frictional failure (dynamic rupture) on a pre-defined fault can be treated as an internal boundary condition \cite{KanekoLapustaAmpuero2008, DuruandDunham2016, delaPuenteAmpueroKaser2009, PeltiesdelaPuenteAmpueroBrietzkeKaser2012}. Non-linear boundary conditions and material behavior may lead to very large gradients in the numerical solution. Accurate and efficient numerical simulation of these problems require carefully designed and provably stable numerical methods.

The DG method has been successfully applied to solve the elastic wave equation, including (elementwise constant) heterogeneous material properties \cite{delaPuenteAmpueroKaser2009, PeltiesdelaPuenteAmpueroBrietzkeKaser2012, DumbserKaser2006}.
However, a crucial component of DG method is the numerical flux \cite{DeGraziaMengaldoMoxeyVincentSherwin2013, Huynh2007}, inherited from finite volume and finite difference methods \cite{Godunov1959, Rusanov1961} for hyperbolic PDEs, based on approximate or exact solutions of the Riemann problem.   It is rather not surprising that high order flux reconstruction finite volume methods \cite{DeGraziaMengaldoMoxeyVincentSherwin2013, Huynh2007} have been shown to be analogous to the DG method. Once the  solution of the  Riemann problem is available,  information  is exchanged across the element boundaries using numerical fluxes.    The Rusanov flux \cite{Rusanov1961} (also called local Lax-Friedrichs flux) is widely used,  because of its simplicity and robustness. Other numerical fluxes such as the centered flux, Godunov flux, Roe flux, and the Engquist-Osher flux, have also been used. The choice of a numerical flux is critical for accuracy and stability of the DG method \cite{Qiu2008, KirbyKarniadakis2005, KoprivaNordstromGassner2016}. For example, including nonlinear frictional models by direct adaption of a Godunov flux introduces a very selective numerical dissipation avoiding spurious high-frequency oscillations which can be problematic in many other solvers of dynamic earthquake rupture and seismic wave propagation \cite{delaPuenteAmpueroKaser2009, PeltiesdelaPuenteAmpueroBrietzkeKaser2012}.   This is due to the upwind property of the Godunov flux, which has been corroborated in the recent paper, \cite{KoprivaNordstromGassner2016}, elucidating the benefits of an upwind flux over a centered flux for first order hyperbolic problems.  However,  issues of normal stress  inconsistency and instability have been reported, when incorporating  nonlinear frictional models in DG method using standard numerical fluxes,  such as the Godunov flux. Thus for problems where interesting linear/nonlinear physical phenomena occur at internal and external boundaries there is a need to develop numerical fluxes that obey the underlying physics.

 For elastic wave propagation in complex media, and where several wave types and wave speeds are simultaneously present, a numerical flux may not be compatible with physical boundary conditions.  In particular, if surface or interface waves are present, this incompatibility can lead to (longtime) numerical instabilities which will eventually destroy the accuracy of numerical simulations. Our preliminary numerical studies show that the Rusanov flux \cite{Rusanov1961}  exhibits  numerical instability when Rayleigh surface waves are present.   In this study we develop a new DG  flux incorporating the physical conditions acting at the element boundaries. 
 The new physically motivated numerical  flux is designed to be compatible with all well-posed and energy stable physical boundary conditions, including linear and nonlinear friction laws, modeling earthquake rupture dynamics \cite{KanekoLapustaAmpuero2008, DuruandDunham2016, PeltiesdelaPuenteAmpueroBrietzkeKaser2012}.  
 
 The main objective of this initial paper is to formulate an alternative way to couple DG elements in elastic solids using physical conditions, with rigorous mathematical support. Our fundamental idea is to use friction to glue DG elements together, in elastic solids, in a provably stable manner. To the best of our knowledge, this has never been reported before in the literature. Thus, all DG inter-element interfaces are frictional interfaces with associated frictional strength.  Classical inter-element interfaces where slip is not permitted have infinite frictional strength, and can never be broken by any load of finite magnitude. Other interfaces where frictional  slip are accommodated have finite frictional strength, and are governed by a generic friction law \cite{Scholz1998, Rice1983,  JRiceetal_83,  Andrews1985}.  External boundaries of the domain are closed with a general linear energy-stable boundary conditions, modeling various geophysical phenomena.   Further, we design a numerical flux obeying the eigen-structure of the PDE and the underlying physics at the internal and external DG element boundaries.

 The paper  begins the development of  a unified provably stable and robust adaptive DG framework for the numerical treatment of  1) nonlinear frictional sliding in elastic solids,  2) for coupling classical DG inter-element interfaces in elastic solids where slip is not permitted, and 3) numerical enforcement of external well-posed boundary conditions modeling various geophysical phenomena. This is critical for reliable and efficient numerical simulations of dynamic earthquake ruptures and time-domain propagating elastic waves in complex Earth models, and numerical simulations of engineering applications where frictional failure can be fatal. 
  We  remark that an analogous  method has been used in a finite difference framework \cite{DuruandDunham2016} to model frictional sliding during dynamic earthquake ruptures \cite{Harris2018, Scholz1998, Rice1983, JRiceetal_83}. However, static and/or dynamic adaptive mesh refinement in a finite difference setting is a great challenge. More importantly, this is the first time physical conditions, such as friction, have been proposed to be used to couple locally adjacent DG elements together, to the global domain.
 For clarity, we will focus on a one space dimensional (1D) model problem. We remark that most of the difficulties we hope to alleviate often appear in higher (2D and 3D) space dimensions.  However, the 1D model problem is simple and sufficient to  demonstrate the fundamentals of our idea, and the procedure and analysis can be easily extended to the multi-dimensional linear elastic wave equation in complex geometries. 
  
 We note that the elastic wave equation is hyperbolic,  can be decomposed into characteristics, and the characteristics are the natural carrier of information in the system.  
 The holy grail of prescribing well-posed boundary conditions is to ensure that boundary data preserve the amplitude of the outgoing characteristics on the boundary. Boundary conditions can then be enforce by modifying the amplitude of the incoming characteristics \cite{GustafssonKreissOliger1995}.
 In order to generate boundary/interface data, we solve  a Riemann-like problem and constrain the solution so that the amplitude of the outgoing characteristic is preserved and the solution satisfies physical boundary/interface conditions (eg. force balance and friction law). The solution is exact and unique.
To communicate data across internal and external element boundaries, we penalize the numerical boundary/interface data on the boundary/interface against incoming characteristics only.  
Next we construct  a flux fluctuation vector obeying the structure of the underlying PDE. 
Finally, we append the flux fluctuation vector to the discretized PDE with physically motivated penalty weights.   By construction our choice of penalty parameters yield an upwind scheme and a discrete energy estimate analogous to the continuous energy estimate. We present numerical experiments, using Lagrange basis with Gauss-Legendre-Lobatto (GLL) quadrature nodes and Gauss-Legendre (GL) quadrature nodes, separately,    verifying accuracy and numerical stability. We present 2D numerical experiments demonstrating the extension of our method to multiple spatial dimensions, verifying high order accuracy  for Rayleigh surface waves and make comparisons with the Rusanov flux. We simulate dynamic earthquake rupture model problems in 1D and 2D, demonstrating the robustness of the method.

The remainder of the paper will proceed as follows. In section 2 we present a model problem and derive continuous energy estimates that our numerical approximation should emulate. Boundary and interface data are constructed in section 3. In section 4, we present the DG method and the new boundary and inter-element procedures, beginning from the integral formulation down to numerical approximations. Numerical stability is proven in section 5, using the energy method. In section 6, we present some numerical examples. In section 7, we draw conclusions and suggest future work.
\section{Model problem}\label{sec:model_problem}
Consider the elastic wave equation in a heterogeneous one space dimensional domain
{\small
\begin{equation}\label{eq:elastic_1D}
\begin{split}
\rho(x) \frac{\partial v}{\partial t} &= \frac{\partial \sigma}{\partial x}, \quad \frac{1}{\mu (x)} \frac{\partial \sigma}{\partial t} = \frac{\partial v}{\partial x}, \quad x \in [0, L], \quad t \ge 0.
\end{split}
\end{equation}
}
The unknowns are $v(x,t)$,  the particle velocity, and $\sigma(x,t)$, the stress field. The material parameter $ \rho(x)$ is the mass density and  $\mu(x)$ is the shear modulus. Define the shear wave-speed by $c_s = \sqrt{\mu/\rho}$. In order to complete the statement of the problem,  and define a well-posed an initial boundary value problem (IBVP), we will need initial conditions at $t = 0$ and boundary conditions at $x = 0, L$.  We prescribe the initial condition in $\mathbb{L}^2(0, L)$,
{
\begin{align}\label{eq:initial_data}
(v(x, 0), \sigma(x, 0)) =  (v_0(x), \sigma_0(x)) \in \mathbb{L}^2(0, L).
\end{align}
}
Now we introduce the shear impedance $Z_s$, the left-going characteristic $p$,  and the right-going characteristic $ q$ defined by
{\small
\begin{equation}\label{eq:characteristics}
p = \frac{1}{2}\left(Z_sv + \sigma\right), \quad q = \frac{1}{2}\left(Z_s v - \sigma\right), \quad Z_s = \rho c_s.
\end{equation}
}
Note  that at the left boundary, $x = 0$, ${p}$ is the outgoing characteristic and $q$ is the incoming characteristic.
Conversely, at  the right boundary, $x = L$,  ${q}$ is the outgoing characteristic and $p$ is the incoming characteristic.

\subsection{Boundary conditions}\label{subsec:boundary_conditions}
When prescribing well-posed boundary conditions, one thing we earnestly seek  is to ensure that boundary data preserve the amplitude of the outgoing characteristics on the boundary. Boundary conditions can then be enforced by modifying the amplitude of the incoming characteristics. In general,  boundary data for the incoming characteristics  can be expressed as a linear combination of the outgoing characteristics \cite{GustafssonKreissOliger1995}.  We consider the  general linear well-posed boundary conditions 
{\small
\begin{equation}\label{eq:boundary_conditions}
q = r_0 p, \quad \text{at} \quad x = 0, \quad \text{and} \quad p = r_L q, \quad \text{at} \quad  x = L,
\end{equation}
}
with the reflection coefficients $r_0$, $r_L$ being real numbers and  $|r_0|, |r_L| \le 1$. 
The amplitude of the incoming characteristic is altered via the reflection coefficients $ r_0$, $r_L $. Note that at $x = 0$,  while  $r_0 = -1$ yields a clamped wall, $r_0 = 0$  yields  an absorbing boundary, and  with $r_0 = 1$  we have a free-surface boundary condition. Similarly, at $x = L$, $r_L = -1$ yields a clamped wall, $r_L = 0$ yields an absorbing boundary, and  $r_L = 1$  gives a free-surface boundary condition.
We have tacitly considered homogeneous boundary forcing, however, the analysis carries over  to the case of inhomogeneous boundary forcing. By rearranging and collecting terms together, the boundary condition \eqref{eq:boundary_conditions} can be rewritten in terms of the primitive variables, $v, \sigma$,  having
{\small
\begin{equation}\label{eq:BC}
\begin{split}
B_0(v, \sigma, Z_{s}, r_0): =\frac{Z_{s}}{2}\left({1-r_0}\right){v} -\frac{1+r_0}{2} {\sigma} = 0,  \quad \text{at} \quad x = 0, \\
 B_L(v, \sigma, Z_{s}, r_L): =\frac{Z_{s}}{2} \left({1-r_L}\right){v} + \frac{1+r_L}{2}{\sigma} = 0, \quad \text{at} \quad  x = L.
 \end{split}
\end{equation}
}

To see that the IBVP, \eqref{eq:elastic_1D} with \eqref{eq:boundary_conditions} or \eqref{eq:BC},  is well-posed we seek an integral form of the PDE \eqref{eq:elastic_1D} by multiplying the elastic wave equation by a set of arbitrary  test functions $(\phi_v(x), \phi_\sigma(x)) \in \mathbb{L}^2(0, L)$ and integrate over the whole domain. We have
{\small
\begin{equation}\label{eq:weak_form_1}
\begin{split}
\int_0^L\left({\rho(x)} \phi_v(x)\frac{\partial v(x, t)}{\partial t}  - \phi_v(x)\frac{\partial \sigma(x, t)}{\partial x} \right) dx = 0,
\end{split}
\end{equation}
\begin{equation}\label{eq:weak_form_2}
\begin{split}
\int_0^L\left(\frac{1}{\mu(x)}\phi_{\sigma}(x)\frac{\partial \sigma(x, t)}{\partial t}  - \phi_{\sigma}(x)\frac{\partial v(x, t)}{\partial x} \right) dx = 0.
\end{split}
\end{equation}
}
We introduce the mechanical energy defined by
{\small
\begin{equation}\label{eq:continuous_energy}
E(t) = \frac{1}{2}\int_0^L{\left({\rho(x)} v^2(x, t) + \frac{1}{\mu(x)}\sigma^2(x, t)\right) dx},
\end{equation}
}
where $E(t)$ is the sum of the kinetic energy and the strain energy.

Now, replace $\phi_v(x)$ with $v(x,t)$ in \eqref{eq:weak_form_1} and $\phi_{\sigma}(x)$ with ${\sigma}(x,t)$ in \eqref{eq:weak_form_2}. Integrating the second term in \eqref{eq:weak_form_1} by parts, and summing the equations \eqref{eq:weak_form_1}--\eqref{eq:weak_form_2}, we find that the spatial derivatives vanish. We have 
{\small
\begin{equation}\label{eq:continuous_energy_rate}
\frac{d E(t)}{dt} = -v(0, t)\sigma(0, t) + v(L, t)\sigma(L, t).
\end{equation}
}
From the boundary conditions \eqref{eq:BC}, it is easy to check that  $v(0, t)\sigma(0, t) \ge 0$ and $v(L, t)\sigma(L, t) \le 0$, for all $|r_0|, |r_L| \le 1$. 
The boundary terms in \eqref{eq:continuous_energy_rate} are negative semi-definite, $-v(0, t)\sigma(0, t) + v(L, t)\sigma(L, t) \le 0$, and dissipative. This energy loss through the boundaries is what the numerical method should mimic. Since boundary terms are negative semi-definite,   we therefore have
{\small
\begin{equation}\label{eq:continuous_energy_rate_1}
\frac{d E(t)}{dt}  \le 0. 
\end{equation}
}
Thus, the  mechanical energy is bounded by the initial mechanical energy for all times, $E(t)  \le E(0) $.
\subsection{Interface conditions}\label{subsec:interface_conditions}
In this section we define physical interface  conditions that must be satisfied when elastic blocks are in contact. One idea of this study is to use friction to couple DG elements to the global domain. Therefore, we consider a generic nonlinear  friction law, accommodating frictional slip motion.

To begin, consider the domain $\Omega = \Omega_{-}\cup  \Omega_{+}$, with $  \Omega_{-}:= [0, x_0]$,  $\Omega_{+}:= [x_0, L]$, $0<x_0<  L$.  We denote field variables and material parameters in the sub-domains  $  \Omega_{\pm}$ with the superscripts $\pm$:  $v^{\pm}$,  $\sigma^{\pm}$, $\rho^{\pm}$,  $\mu^{\pm}$, $Z_s^{\pm}$. Since there are two characteristics going in and out of the interface we need exactly two interface conditions coupling the elastic subdomains. Define tractions $T^- = \sigma^-$, $T^{+} = - \sigma^{+}$, acting on the interface. We begin with force balance:
\begin{align}
T^- = -T^+ \iff \sigma^{-} = \sigma^{+} = \sigma.
\end{align}
To complete the interface condition we introduce discontinuity in particle velocity:  $\llbracket v \rrbracket := v^{+} - v^{-}$, and  define the absolute slip-rate $V: = \left|\llbracket v \rrbracket\right|$. 
We introduce the compressive normal stress $\sigma_n > 0$ and define the frictional constitutive relation, we have
\begin{align}
  \sigma =  \alpha  \llbracket v \rrbracket, \quad \alpha = \sigma_n\frac{f(V)}{V} \ge 0 .
\end{align}
Here $f(V) \ge 0$ with $f(0) =0$ is the nonlinear friction coefficient.
Note that
\begin{align}
 {V} \to 0 \iff  \alpha = \sigma_n\frac{f(V)}{V} \to \infty .
\end{align}
For later use, we summarize the interface condition:
{
\begin{align}\label{eq:physical_interface}
\text{force balance}:  \quad &\sigma^{-} = \sigma^{+} = \sigma, \nonumber \\
\text{friction law}: \quad & \sigma =\alpha \llbracket v \rrbracket, \quad \alpha = \sigma_n\frac{f(V)}{V}\ge 0.
\end{align}
}
Tractions on the interface are related to particle velocities via  $\sigma = \alpha \llbracket v \rrbracket$, with $ \alpha \ge 0$.
The  parameter {\small $\alpha \ge 0$} is related to the nonlinear frictional strength of the interface. Note that there are two limiting values, a locked interface: {\small \small $ \alpha \to \infty \iff \lJump  {v}  \rJump \to 0$}, and a frictionless interface: {\small \small $\alpha \to 0 \iff \sigma \to 0$}. These limiting cases are degenerate but physically feasible.

Since  {\small \small $ \alpha \to \infty \iff \lJump  {v}  \rJump \to 0$}, the limit $ \alpha \to \infty$ in  \eqref{eq:physical_interface} is an alternative way of expressing the continuity of particle velocities across an interface, thus gives the natural condition to be used to patch DG elements together, when slip motion is not present. However, we can model nonlinear frictional slip motion by replacing $f(V)$ in \eqref{eq:physical_interface} with an appropriate friction law \cite{Scholz1998, Rice1983, JRiceetal_83}.

We define the mechanical energy in each subdomain by
{\small
\begin{equation}\label{eq:continuous_energy_element}
E^{\pm}(t) = \frac{1}{2}\int_{\Omega_{\pm}}{\left({\rho^{\pm}(x)} |v^{\pm}(x, t)|^2 + \frac{1}{\mu^{\pm}(x)}|\sigma^{\pm}(x, t)|^2\right) dx}.
\end{equation}
}
The elastic wave equation with the physical  interface condition \eqref{eq:physical_interface},   satisfies the energy equation
{\small
\begin{equation}\label{eq:continuous_energy_rate_1}
\frac{d E(t)}{dt}  = -\sigma \llbracket v \rrbracket -v^-(0, t)\sigma^-(0, t) + v^+(L, t)\sigma^+(L, t),
\end{equation}
}
with $E(t) = E^{-}(t) + E^{+}(t)$. The interior term $ -\sigma \llbracket v \rrbracket$ is the rate of work done by friction during frictional slip,  which is dissipated as heat. Note the negative work rate,  and  since for $\alpha \ge 0$ we have $\sigma \llbracket v \rrbracket = \alpha \llbracket v \rrbracket ^2 = \frac{1}{\alpha} \sigma ^2 \ge 0$.  At the limit  {\small \small $ \alpha \to \infty \iff \lJump  {v}  \rJump \to 0$} or {\small \small $\alpha \to 0 \iff \sigma \to 0$},  the interior term vanishes, {\small ${\sigma} {\lJump {v}  \rJump}  \to 0$}.  Thus, at {\small $\alpha \to \infty$ or $\alpha \to 0$}, the energy equation \eqref{eq:continuous_energy_rate_1} is completely equivalent to \eqref{eq:continuous_energy_rate}.

Our main objective is to formulate an inter-element procedure incorporating the physical interface condition \eqref{eq:physical_interface} and the boundary condition \eqref{eq:BC}, so that a discrete energy equation analogous to \eqref{eq:continuous_energy_rate_1} can be derived. The procedure should be formulated in a unified manner such that numerical flux functions are compatible with the general linear boundary condition \eqref{eq:boundary_conditions} or \eqref{eq:BC}.  Furthermore,  the procedure should be efficient for explicit time stepping schemes, thus   avoiding numerical stiffness, for all  {\small $0\le \alpha \le \infty$}. The numerical treatment should be easily extended to higher space dimensions (2D and 3D). 

\section{ Hat-variables}\label{sec:hat_variables}
We will now reformulate the boundary condition  \eqref{eq:boundary_conditions} and  interface condition \eqref{eq:physical_interface} by introducing transformed (hat-) variables so that we can simultaneously construct (numerical) boundary/interface data for particle velocities and tractions.  The hat-variables encode the solution of the IBVP on the boundary/interface. The hat-variables  will be constructed such that they preserve the amplitude of the outgoing characteristics and  satisfy the physical boundary conditions \cite{DuruandDunham2016} exactly. To be more specific, the hat-variables are solutions of the Riemann problem constrained against physical boundary/interface conditions \eqref{eq:BC}  and \eqref{eq:physical_interface}.
\subsection{Boundary data}\label{subsec:boundary_hat_variables}
We will  construct boundary data which satisfy the physical boundary conditions \eqref{eq:BC} exactly and preserve the amplitude of the outgoing characteristic  $p$ \text{at}   $x = 0$, and  $q$ at $x = L$.
To begin, define the hat-variables preserving the amplitude of outgoing characteristics
{\small
\begin{equation}\label{eq:BC_hat_1bc}
\frac{1}{2}\left(Z_{s}(0)\widehat{v}_0 + \widehat{\sigma}_0\right) = p_0, \quad \frac{1}{2}\left({Z_{s}(L)}\widehat{v}_L - \widehat{\sigma}_L \right)= q_L,
\end{equation}
}
with
{\small
\begin{equation}\label{eq:characteristics_2}
p_0 = \frac{1}{2}\left(Z_s(0)v(0,t) + \sigma(0,t)\right), \quad q_L = \frac{1}{2}\left(Z_s(L) v(L,t) - \sigma(L,t)\right).
\end{equation}
}
Since hat-variables also satisfy the physical boundary condition, we must have
{\small
\begin{equation}\label{eq:BC_hat_2bc}
\frac{Z_{s}(0)}{2}\left({1-r_0}\right)\widehat{v}_0 -\frac{1+r_0}{2} \widehat{\sigma}_0 = 0, \quad \frac{Z_{s}(L)}{2} \left({1-r_L}\right)\widehat{v} _L+ \frac{1+r_L}{2}\widehat{\sigma}_L = 0.
\end{equation}
}
The algebraic problem for the hat-variables, defined by equations \eqref{eq:BC_hat_1bc} and \eqref{eq:BC_hat_2bc},  has a unique solution, namely
{\small
\begin{align}\label{eq:data_hat}
\widehat{v}_0  = \frac{(1+r_0)}{Z_{s}(0)}p_0, \quad \widehat{\sigma}_0  = {(1-r_0)}p_0, \nonumber \\
\widehat{v}_L = \frac{(1+r_L)}{Z_{s}(L)}q_L,  \quad \widehat{\sigma}_L  = {-(1-r_L)}q_L.
\end{align}
}
The expressions in \eqref{eq:data_hat} define a rule to update particle velocities and tractions on the external boundaries $x = 0, L$,
{\small
\begin{align}\label{eq:boundary_data_hat}
v(x, t)  &= \widehat{v}_0(x, t) , \quad {\sigma}(x, t) = \widehat{\sigma}_0(x, t) , \quad \text{at} \quad x = 0, \nonumber \\
v(x, t)  &= \widehat{v}_L(x, t)  , \quad {\sigma}(x, t) = \widehat{\sigma}_L(x, t) , \quad \text{at} \quad x = L.
\end{align}
}
It is particularly important to note that the boundary procedure \eqref{eq:boundary_data_hat} is equivalent  to the original boundary condition \eqref{eq:boundary_conditions}.
To verify this, consider a free-surface boundary condition at $x = 0$, with $r_0 = 1$. From \eqref{eq:data_hat}  and \eqref{eq:boundary_data_hat} we have 
$
{\sigma}(0, t) = \widehat{\sigma}_0(0, t) = 0, 
$
and 
$
{v}(0, t) = \widehat{v}_0(0, t) = v(0,t).
$
The traction on the boundary, at $x = 0$, vanishes and the particle velocity on the boundary, at $x = 0$, is not altered by the boundary procedure \eqref{eq:boundary_data_hat}.

By construction, the hat-variables $\widehat{v}_0, \widehat{\sigma}_0$, $\widehat{v}_L, \widehat{\sigma}_L$ satisfy the following algebraic identities:
\begin{subequations}\label{eq:identity_bc}
\small
\begin{equation}\label{eq:identity_1_bc}
\widehat{p}_0 = p_0,  \quad \widehat{q}_L = q_L,
\end{equation}
\begin{equation}\label{eq:identity_2_bc}
\left(p_0\right)^2-\left(\widehat{q}_0\right)^2 = Z_s(0)\widehat{\sigma}_0\widehat{v}_0, \quad \left(q_L\right)^2 -\left(\widehat{p}_L\right)^2 = -Z_s(L)\widehat{\sigma}_L\widehat{v}_L,
\end{equation}
\begin{equation}\label{eq:identity_3_bc}
 \widehat{\sigma}_0\widehat{v}_0 = \frac{1-r_0^2}{Z_{s}(0)}|p_0|^2 \ge 0, \quad \widehat{\sigma}_L\widehat{v}_L = -\frac{1-r_L^2}{Z_{s}(L)}|q_0|^2 \le 0.
\end{equation}
\end{subequations}
The first identity \eqref{eq:identity_1_bc} holds by  definition \eqref{eq:BC_hat_1bc}. Using \eqref{eq:identity_1_bc} in $\left(p_0\right)^2-\left(\widehat{q}_0\right)^2$ and  $ \left(q_L\right)^2 -\left(\widehat{p}_L\right)^2$ gives the second identity  \eqref{eq:identity_2_bc}. From the solutions of the hat-variables in \eqref{eq:data_hat} it is clear that \eqref{eq:identity_3_bc} holds.
The algebraic identities \eqref{eq:identity_1_bc}--\eqref{eq:identity_3_bc} will be crucial in proving numerical stability.

\subsection{Interface data}\label{subsec:interface_hat_variables}
Similarly, for the interface we define the outgoing characteristics 
\begin{equation}\label{eq:outgoing_charact}
q^{-}:= \frac{1}{2}\left(Z_{s}^{-} v^{-} - \sigma^{-}\right), \quad p^{+}:= \frac{1}{2}\left(Z_{s}^{+} v^{+} + \sigma^{+}\right),
\end{equation}
that must be preserved by the interface data. By combining \eqref{eq:outgoing_charact} with force balance, $\sigma^{-} = \sigma^{+} = \sigma$, we obtain
\begin{equation}\label{eq:radiation_damping_line}
{\sigma} = \Phi  - \eta  \llbracket {v} \rrbracket,
\end{equation}
where
\[
\Phi = \eta \left(\frac{2}{Z_{s}^{+}}p^{+} - \frac{2}{Z_{s}^{-}} q^{-}\right), \quad  \eta = \frac{Z_{s}^{-}Z_{s}^{+}}{Z_{s}^{+}+Z_{s}^{-}} > 0.
\]
Note that $\Phi$ is the stress transfer functional and $\eta  \llbracket {v} \rrbracket$ is the radiation damping term \cite{DuruandDunham2016, GeubelleRice1995}. Equation \eqref{eq:radiation_damping_line} arises naturally in the boundary integral formulation of linear elasticity \cite{GeubelleRice1995}. In particular, ${\sigma} = \Phi$ is the traction on a locked  interface, $\llbracket {v} \rrbracket = 0$, which is altered by outgoing wave radiation, according to \eqref{eq:radiation_damping_line}, when the interface is slipping, $\llbracket {v} \rrbracket \ne 0$.

We want to construct interface data $\widehat{v}^{-}, \widehat{\sigma}^{-}$, $\widehat{v}^{+}, \widehat{\sigma}^{+}$, and the absolute slip-rate $\widehat{V} = |{\lJump  \widehat{v}  \rJump }| \ge 0$, such that the data satisfy the physical interface conditions (force balance + friction law) 
\begin{align}\label{eq:physical_interface_hat}
\text{force balance}:  \quad &{ \widehat{\sigma}^{-} = \widehat{\sigma}^{+} = \widehat{\sigma}}, \nonumber\\
\text{friction law}: \quad &  \widehat{\sigma} = \alpha {\lJump  \widehat{v}  \rJump },  \quad \alpha = \sigma_n\frac{f(\widehat{V})}{\widehat{V}}\ge 0,
\end{align}
 and preserve the amplitude of the outgoing characteristics
\begin{align}\label{eq:charac_interface_hat}
\widehat{q}^{-}:=\frac{1}{2}\left(Z_{s}^{-} \widehat{v}^{-} - \widehat{\sigma}^{-}\right) = q^{-}, \quad \widehat{p}^{+}:= \frac{1}{2}\left(Z_{s}^{+} \widehat{v}^{+} + \widehat{\sigma}^{+}\right) = p^{+}.
\end{align}
As before, combining both equations in \eqref{eq:charac_interface_hat}  and enforcing force balance,  ${ \widehat{\sigma}^{-} = \widehat{\sigma}^{+} = \widehat{\sigma}}$, defined in \eqref{eq:physical_interface_hat},  we obtain
\[
\widehat{\sigma} = \Phi  - \eta  \llbracket \widehat{v} \rrbracket.
\]
Thus, we obtain the nonlinear algebraic problem for tractions and slip-rate,
\begin{equation}\label{eq:algebraic_problem}
\widehat{\sigma} = \Phi  - \eta  \llbracket \widehat{v} \rrbracket, \quad   \widehat{\sigma} = \alpha \llbracket \widehat{v} \rrbracket,   \quad \alpha = \sigma_n\frac{f(\widehat{V})}{\widehat{V}}\ge 0.
\end{equation}
However, if the friction coefficient $f(\widehat{V})$ is linear the corresponding algebraic problems in \eqref{eq:algebraic_problem} will be linear.
By combing the two equations in \eqref{eq:algebraic_problem} to
\begin{equation}\label{eq:algebraic_problem_0}
\sigma_n{f(\widehat{V})}  + \eta \widehat{V} = |\Phi|,
\end{equation}
which is a nonlinear algebraic equation for the absolute slip-rate $\widehat{V}\ge 0$. We can now solve  \eqref{eq:algebraic_problem_0} for the absolute slip-rate $\widehat{V}$ using any root finding algorithm, and compute $\alpha \ge 0$.
 The above algebraic problem \eqref{eq:algebraic_problem} has a unique solution which is solved exactly,
\begin{equation}\label{eq:sol_linear_equation}
\widehat{\sigma} = \frac{\alpha}{\eta + \alpha}\Phi , \quad  \llbracket \widehat{v} \rrbracket = \frac{1}{\eta + \alpha}\Phi,   \quad \alpha = \sigma_n\frac{f(\widehat{V})}{\widehat{V}}\ge 0.
\end{equation}
We therefore have
\[
\widehat{\sigma}^{-}  = \widehat{\sigma}^{+} = \widehat{\sigma},
\]
and
\[
\widehat{v}^{-}  = \frac{1}{Z_{s}^{+} }\left(2p^{+} - \widehat{\sigma}^{+}\right) - \llbracket \widehat{v} \rrbracket, \quad
\widehat{v}^{+}  = \frac{1}{Z_{s}^{-} }\left(2q^{-} + \widehat{\sigma}^{-}\right) + \llbracket \widehat{v} \rrbracket.
\]
We have constructed a rule to update tractions and particle velocities  on the interface, $x = x_0$,
\begin{align}\label{eq:interface_data_hat}
{\sigma}^{-} &= \widehat{\sigma}^{-}, \quad  {\sigma}^{+}  =  \widehat{\sigma}^{+},\nonumber \\
{v}^{-} &= \widehat{v}^{-},  \quad {v}^{+}  = \widehat{v}^{+}.
\end{align}
In \eqref{eq:interface_data_hat}, we have equivalently redefined the physical interface condition \eqref{eq:physical_interface}.

By construction, the hat-variables $\widehat{v}^{-}, \widehat{\sigma}^{-}$, $\widehat{v}^{+}, \widehat{\sigma}^{+}$ satisfy the following algebraic identities:
\begin{subequations}\label{eq:identity}
\small
\begin{equation}\label{eq:identity_1}
\widehat{p}^+ = p^+,  \quad \widehat{q}^- = q^-,
\end{equation}
\begin{equation}\label{eq:identity_2}
\left(p^+\right)^2-\left(\widehat{q}^+\right)^2 = Z_s^+\widehat{\sigma}\widehat{v}^+, \quad \left(q^-\right)^2 -\left(\widehat{p}^-\right)^2 = -Z_s^-\widehat{\sigma}\widehat{v}^-,
\end{equation}
\begin{equation}\label{eq:identity_3}
\frac{1}{Z_s^+}\left(\left(p^+\right)^2-\left(\widehat{q}^+\right)^2\right)  + \frac{1}{Z_s^-}\left(\left(q^-\right)^2 -\left(\widehat{p}^-\right)^2\right) =  \widehat{\sigma} \llbracket \widehat{v} \rrbracket = \frac{\alpha}{(\eta + \alpha)^2}|\Phi|^2,
\end{equation}
\end{subequations}
where
\begin{align*}\label{eq:charac_interface_hat}
\widehat{p}^{-}:=\frac{1}{2}\left(Z_{s}^{-} \widehat{v}^{-} +\widehat{\sigma}^{-}\right) , \quad \widehat{q}^{+}:= \frac{1}{2}\left(Z_{s}^{+} \widehat{v}^{+} - \widehat{\sigma}^{+}\right) .
\end{align*}
The first identity \eqref{eq:identity_1} holds by the definition \eqref{eq:charac_interface_hat}. Using \eqref{eq:identity_1} in $\left(p^+\right)^2-\left(\widehat{q}^+\right)^2$ and  $ \left(q^-\right)^2 -\left(\widehat{p}^-\right)^2$ gives the second identity  \eqref{eq:identity_2}. The third identity \eqref{eq:identity_3} follows trivially from \eqref{eq:identity_2} with $\widehat{\sigma} = \frac{\alpha}{\eta + \alpha}\Phi$, $\widehat{v}^+ - \widehat{v}^- := \llbracket \widehat{v} \rrbracket = \frac{1}{\eta + \alpha}\Phi$.
The data is unique and exact. Note the consistency at the limits: {\small \small $ \alpha \to \infty \iff \lJump  \widehat{v}  \rJump \to 0$, $\widehat{\sigma}\lJump  \widehat{v}  \rJump \to 0$},  and {\small \small $\alpha \to 0 \iff \widehat{\sigma} \to 0$, $\widehat{\sigma} \lJump  \widehat{v}  \rJump \to 0$.
As before, the identities defined in \eqref{eq:identity_1}--\eqref{eq:identity_3} will be crucial in proving numerical stability.
\section{ The discontinuous Galerkin method}
We begin by discretizing the interval $x \in [0, L]$ into $K$ elements denoting the $k$-th element by $e^k = [x_k, x_{k+1}]$, where $k = 1, 2, \dots, K$, with $x_1 = 0$ and $x_{K+1} = L$.
Therefore, the integral form \eqref{eq:weak_form_1}--\eqref{eq:weak_form_2} yield
\begin{equation}\label{eq:velocity_weak}
\begin{split}
\sum_{k = 1}^{K}\int_{x_k}^{x_{k+1}}\left({\rho(x)} \phi_v(x)\frac{\partial v(x, t)}{\partial t}  - \phi_v(x)\frac{\partial \sigma(x, t)}{\partial x} \right) dx = 0, 
\end{split}
\end{equation}
\begin{equation}\label{eq:stress_weak}
\small
\sum_{k = 1}^{K}\int_{x_k}^{x_{k+1}}\left(\frac{1}{\mu(x)}\phi_{\sigma}(x)\frac{\partial \sigma(x, t)}{\partial t}  - \phi_{\sigma}(x)\frac{\partial v(x, t)}{\partial x} \right) dx = 0.
\end{equation}
\subsection{Inter-element and boundary procedure, and energy identity}
We will begin the development and construction  of the inter-element and boundary procedure for the continuous integral form \eqref{eq:velocity_weak}--\eqref{eq:stress_weak}. As we will see later the procedure and  analysis will naturally carry over when numerical approximations are introduced. We will end the discussion with the derivation of an energy equation analogous to \eqref{eq:continuous_energy_rate}.

Next   we consider the element boundaries, $x = x_k, x_{k+1}$, and generate boundary and interface data $\widehat{v}(x, t)$, $ \widehat{\sigma} (x, t)$.
Note that, by both physical and mathematical considerations, the only way information can be propagated into an element is through the incoming characteristics on the boundaries,  $q$ at $x_k$ and $p$ at $x_{k+1}$. We construct flux fluctuations by penalizing data against incoming characteristics $p$ and $q$,
\begin{equation}
\small
 F(x_k, t):=q-\widehat{q} =\frac{Z_{s}(x_k)}{2} \left(v(x_k, t)-\widehat{v}(x_k, t) \right) - \frac{1}{2}\left(\sigma(x_k, t)- \widehat{\sigma} (x_k, t)\right),
\end{equation}
\begin{equation}
\small
G(x_{k+1}, t) := p-\widehat{p} = \frac{Z_{s}(x_{k+1}) }{2} \left(v(x_{k+1}, t) -\widehat{v}(x_{k+1}, t)  \right) + \frac{1}{2}\left(\sigma(x_{k+1}, t) - \widehat{\sigma}(x_{k+1}, t) \right).
\end{equation}
Note that $q$ is the incoming characteristic at the left element boundary $x = x_k$ and $p$ is incoming characteristic at right element boundary $x = x_{k+1}$.
Therefore, $F(x_k, t)$ penalizes data against the incoming characteristic at $x = x_k$ and $G(x_{k+1}, t)$ penalizes data against the incoming characteristic at $x = x_{k+1}$. 
\begin{remark}
Note the uniform treatment of all DG element boundaries $x = x_k, x_{k+1}$, by the flux fluctuations $F(x_{k}, t)$ and $G(x_{k+1}, t)$.
The difference between external element  boundaries $x_k = 0$, $x_{k+1} =L$ and internal element  boundaries $x_k > 0$, $x_{k+1} < L$ is determined by the algebraic problem yielding the corresponding hat-variables $\widehat{v}$,  $\widehat{\sigma}$.
\end{remark}
Since we have not introduced any approximation yet, we must have $v(x_k, t) \equiv \widehat{v}(x_k, t)$,  $\sigma(x_k, t) \equiv  \widehat{\sigma}(x_k, t)$ and $v(x_{k+1}, t) \equiv  \widehat{v}(x_{k+1}, t)$,  $\sigma(x_{k+1}, t) \equiv  \widehat{\sigma}(x_{k+1}, t)$.    Thus, at the external boundaries, at $x_1 = 0$, $x_{K+1} = L$,  the fluctuations satisfy  the boundary operator $B_0(v(x_{1}, t), \sigma(x_{1}, t), Z_{s}(x_1), r_0)=0$, $B_L(v(x_{K+1}, t), \sigma(x_{K+1}, t), Z_{s}(x_{K+1}), r_L)=0$, obtaining
\begin{align}
 F(x_1, t)\equiv B_0(v(x_{1}, t), \sigma(x_{1}, t), Z_{s}(x_1), r_0)=0, \quad  G(x_{K+1}, t)\equiv B_L(v(x_{K+1}, t), \sigma(x_{K+1}, t), Z_{s}(x_{K+1}), r_L)=0.
\end{align}
Next, append the flux fluctuations, $F(x_{k}, t) \to 0 $, $G(x_{k+1}, t) \to 0$,   to the integral form \eqref{eq:velocity_weak}--\eqref{eq:stress_weak} with special penalty weights. Thus, we have the weak form
\begin{equation}\label{eq:velocity_weak_pen}
\begin{split}
&\sum_{k = 1}^{K}{\left(\int_{x_k}^{x_{k+1}}\left({\rho(x)} \phi_v(x)\frac{\partial v(x, t)}{\partial t}  - \phi_v(x)\frac{\partial \sigma(x, t)}{\partial x} \right) dx \right)}+ \sum_{k = 1}^{K}{\left(\phi_v(x_k) F(x_k, t) +  \phi_v(x_{k+1})G(x_{k+1}, t)\right)} = 0, 
\end{split}
\end{equation}
\begin{equation}\label{eq:stress_weak_pen}
\begin{split}
&\sum_{k = 1}^{K}\left(\int_{x_k}^{x_{k+1}}\left(\frac{1}{\mu(x)}\phi_{\sigma}(x)\frac{\partial \sigma(x, t)}{\partial t}  - \phi_{\sigma}(x)\frac{\partial v(x, t)}{\partial x} \right) dx \right) -\sum_{k = 1}^{K}\left( \frac{\phi_{\sigma}(x_{k})}{Z_s(x_{k})}F(x_{k}, t) -   \frac{\phi_{\sigma}(x_{k+1})}{Z_s(x_{k+1})}G(x_{k+1}, t)\right) = 0.
\end{split}
\end{equation}

We have  weakly implemented the boundary and interface conditions by penalizing data against the incoming characteristics at the element boundaries at $x = x_k$ and $x = x_{k+1}$.  Recall that we are yet to introduce numerical approximations, therefore the flux fluctuations vanish identically, that is  $G(x_{k+1}, t) = F(x_{k}, t)  = 0$. However, when numerical approximations are introduced the flux fluctuations will be proportional to the truncation error.  Note  that the external physical boundary conditions and the inter-element conditions are treated in a unified manner.

The penalty weights have been chosen such that the physical dimensions of all terms in equations \eqref{eq:velocity_weak_pen}--\eqref{eq:stress_weak_pen} match. For instance in the stress equation \eqref{eq:stress_weak_pen}, we have penalized the flux functions by the shear admittance, $1/Z_s(x)$. This is motivated by a dimensional analysis. As we will see later, this physically motivated penalty weight is also critical for numerical stability.

\begin{remark}
The following remarks are of significant importance, and summarize the procedure:
\begin{itemize}
\item[1.] All DG inter-element faces are held together by a frictional strength, $\alpha \ge 0$.
\item[2.] Classical DG element internal faces where slip is not permitted have infinite frictional strength, $\alpha \to \infty$, and can never slip.
\item[3.] Weak  interfaces have  finite frictional strength,  $\alpha \ge 0$, and the  slip  motion is governed by a friction law.
\item[4.] External DG element faces, at $x = 0, L$, are closed with the linear well-posed boundary conditions \eqref{eq:boundary_conditions}.
\item[5.]  We construct transformed (hat-) variables that encode the solutions of the IBVP at element faces.
\item[6.] By construction the DG flux fluctuations, $G(x_{k+1}, t)$  $F(x_{k}, t)$, have been designed to satisfy the boundary condition  \eqref{eq:boundary_conditions} and  the frictional interface condition \eqref{eq:physical_interface} exactly.
\end{itemize}
\end{remark}

We can now state our first main result.
\begin{theorem}\label{theorem:main_1}
The weak form \eqref{eq:velocity_weak_pen}--\eqref{eq:stress_weak_pen} satisfies the energy identity
\begin{equation}\label{eq:weak_energy}
\begin{split}
\frac{d}{dt} {E}(t) = &-\sum_{k=1}^{K}\left(\frac{1}{Z_{s}(x_k)}|F(x_k, t)|^2 + \frac{1}{Z_{s}(x_{k+1})}|G(x_{k+1}, t)|^2 \right) - \sum_{k=2}^{K}\frac{\alpha(x_k)}{\left(\eta(x_k) + \alpha(x_k)\right)^2}|\Phi(x_k)|^2  \\
& - \frac{1-r_0^2}{Z_{s}(0)}|p_0|^2 -\frac{1-r_L^2}{Z_{s}(L)}|q_L |^2,
\end{split}
\end{equation}
with $p_0$, $q_L$ defined in \eqref{eq:characteristics_2}.
\end{theorem}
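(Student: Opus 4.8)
The plan is to reproduce the continuous energy argument of Section~\ref{sec:model_problem}, but now carried out element by element, and then to show that the element-boundary terms generated by integration by parts combine with the flux-fluctuation penalties into precisely the dissipative right-hand side of \eqref{eq:weak_energy}, using the algebraic identities \eqref{eq:identity_bc} and \eqref{eq:identity} as the engine of the calculation.

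First I would take $\phi_v(x) = v(x,t)$ in \eqref{eq:velocity_weak_pen} and $\phi_\sigma(x) = \sigma(x,t)$ in \eqref{eq:stress_weak_pen}. The two volume time-derivative terms assemble into $\tfrac{d}{dt}E(t)$. Integrating $-\int_{x_k}^{x_{k+1}} v\,\partial_x\sigma\,dx$ by parts on each element and adding the two equations, the interior terms $\int_{x_k}^{x_{k+1}}\sigma\,\partial_x v\,dx$ cancel, leaving
\begin{equation*}
\frac{d}{dt}E(t) = \sum_{k=1}^{K}\left[v\sigma\right]_{x_k}^{x_{k+1}} - \sum_{k=1}^{K}\left(\frac{2q(x_k)}{Z_s(x_k)}F(x_k,t) + \frac{2p(x_{k+1})}{Z_s(x_{k+1})}G(x_{k+1},t)\right),
\end{equation*}
where I have used the penalty weights together with $v - \sigma/Z_s = 2q/Z_s$ and $v + \sigma/Z_s = 2p/Z_s$ to recast the penalty contributions purely in terms of the incoming characteristics. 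This is exactly where the physically motivated weight $1/Z_s$ in the stress equation is indispensable.

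Next I would convert the boundary terms via $v\sigma = (p^2 - q^2)/Z_s$ and regroup the entire right-hand side node by node. Each interior node $x_j$, $2\le j\le K$, is shared by the left element $e^{j-1}$ (supplying one-sided traces $p^-,q^-$ and the fluctuation $G(x_j)=p^- - \widehat{p}^{\,-}$) and the right element $e^{j}$ (supplying $p^+,q^+$ and $F(x_j)=q^+ - \widehat{q}^{\,+}$), while $x_1=0$ and $x_{K+1}=L$ receive contributions from a single element. At an interior node I would complete the square in $p^- - \widehat{p}^{\,-}$ and in $q^+ - \widehat{q}^{\,+}$, producing $-|G(x_j)|^2/Z_s^{-} - |F(x_j)|^2/Z_s^{+}$ plus a remainder equal to $\bigl((\widehat{p}^{\,-})^2-(q^-)^2\bigr)/Z_s^{-} - \bigl((p^+)^2-(\widehat{q}^{\,+})^2\bigr)/Z_s^{+}$; by \eqref{eq:identity_2} this remainder is $\widehat{\sigma}\widehat{v}^- - \widehat{\sigma}\widehat{v}^+ = -\widehat{\sigma}\llbracket\widehat{v}\rrbracket$, which by \eqref{eq:identity_3} equals $-\alpha(x_j)|\Phi(x_j)|^2/(\eta(x_j)+\alpha(x_j))^2$. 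At $x_1$ and $x_{K+1}$ the same completion of the square, now using $\widehat{q}_0 = r_0 p_0$ and $\widehat{p}_L = r_L q_L$ together with \eqref{eq:identity_2_bc}--\eqref{eq:identity_3_bc}, gives $-|F(x_1)|^2/Z_s(0) - (1-r_0^2)|p_0|^2/Z_s(0)$ and $-|G(x_{K+1})|^2/Z_s(L) - (1-r_L^2)|q_L|^2/Z_s(L)$. Summing the nodal contributions reproduces \eqref{eq:weak_energy}, and since the hat-variables here are the exact constrained Riemann solution, the identity holds for both linear and nonlinear friction.

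The main obstacle is the bookkeeping at interior nodes: one must keep the left and right one-sided traces distinct, match each fluctuation $F$, $G$ to the element that produced it and to the corresponding impedance $Z_s^{\pm}$, and track the signs arising from the upper and lower limits of $\left[v\sigma\right]_{x_k}^{x_{k+1}}$. Once the cross terms are isolated, the only genuine content is recognising that the identities \eqref{eq:identity} are engineered so that the leftover non-square part is exactly the slip dissipation $\widehat{\sigma}\llbracket\widehat{v}\rrbracket = \alpha|\Phi|^2/(\eta+\alpha)^2 \ge 0$; establishing this sign-definite remainder is where the construction of the hat-variables pays off, and I expect no further analytic difficulty beyond this careful algebra since all required identities are already in hand.
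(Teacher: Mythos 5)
Your proposal is correct and follows essentially the same route as the paper's proof: substituting the solution for the test functions, integrating by parts so the interior terms cancel, combining the penalty terms with the boundary terms via the characteristics (your completion of the square in $q-\widehat q$ and $p-\widehat p$ is exactly what the paper packages into identities \eqref{eq:identity_pen_1}--\eqref{eq:identity_pen_2}), and then invoking \eqref{eq:identity_2_bc}--\eqref{eq:identity_3_bc} and \eqref{eq:identity_2}--\eqref{eq:identity_3} to turn the leftover $p^2-\widehat q^{\,2}$ and $q^2-\widehat p^{\,2}$ terms into the friction dissipation and reflection-coefficient terms. The only cosmetic difference is that you regroup the final sums node by node rather than element by element, which is the same bookkeeping in a different order.
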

\begin{proof}
As in section \eqref{subsec:boundary_conditions}, by replacing  $\phi_v(x)$ with $v(x,t)$ in \eqref{eq:velocity_weak_pen} and $\phi_{\sigma}(x)$ with ${\sigma}(x,t)$ in \eqref{eq:stress_weak_pen}, and integrate by parts the spatial derivative term in \eqref{eq:velocity_weak_pen} we have
\begin{equation}\label{eq:velocity_weak_pen_1}
\begin{split}
\small
&\sum_{k = 1}^{K}{\left(\int_{x_k}^{x_{k+1}}\left({\rho(x)} v(x,t)\frac{\partial v(x, t)}{\partial t}  + \sigma(x,t)\frac{\partial v(x, t)}{\partial x} \right) dx -v(x_{k+1}, t)\sigma(x_{k+1}, t)  \right)}\\
+& \sum_{k = 1}^{K}{\left(v(x_k, t)\sigma(x_k, t)+ v(x_k, t) F(x_k, t) +  v(x_{k+1}, t)G(x_{k+1}, t)  \right)} = 0, 
\end{split}
\end{equation}
\begin{equation}\label{eq:stress_weak_pen_1}
\begin{split}
\small
&\sum_{k = 1}^{K}\left(\int_{x_k}^{x_{k+1}}\left(\frac{1}{\mu(x)}\sigma(x,t)\frac{\partial \sigma(x, t)}{\partial t}  - \sigma(x,t)\frac{\partial v(x, t)}{\partial x} \right) dx \right) \\
&-\sum_{k = 1}^{K}\left( \frac{{\sigma}(x_{k}, t)}{Z_s(x_{k})}F(x_{k}, t) -   \frac{{\sigma}(x_{k+1}, t)}{Z_s(x_{k+1})}G(x_{k+1}, t)\right) = 0.
\end{split}
\end{equation}
Thus, summing  \eqref{eq:velocity_weak_pen_1} and \eqref{eq:stress_weak_pen_1} together,  the interior terms involving spatial derivatives cancel leaving only the boundary terms, having 
\begin{equation}\label{eq:identity_pen_3}
\begin{split}
&\frac{d}{dt}\left[\sum_{k=1}^{K} \frac{1}{2}\int_{x_k}^{x_{k+1}}{\left({\rho(x)} v^2(x, t) + \frac{1}{\mu(x)}\sigma^2(x, t)\right) dx}\right] = \sum_{k = 1}^{K}{\left(v(x_{k+1}, t)\sigma(x_{k+1}, t)\right) }  \\
&-\sum_{k = 1}^{K}{\left(v(x_k, t)\sigma(x_k, t)+ v(x_k, t) F(x_k, t) +  v(x_{k+1}, t)G(x_{k+1}, t) \right) } \\
&+\sum_{k = 1}^{K}\left( \frac{{\sigma}(x_{k}, t)}{Z_s(x_{k})}F(x_{k}, t) -   \frac{{\sigma}(x_{k+1}, t)}{Z_s(x_{k+1})}G(x_{k+1}, t)\right).
\end{split}
\end{equation}
Note that
\begin{equation}\label{eq:identity_pen_1}
\begin{split}
\small
&v(x_k,t) F(x_k,t) + v(x_k,t) \sigma(x_k,t)  - \frac{1}{Z_s(x_k) }\sigma(x_k,t)  F(x_k,t) \\
&= \frac{1}{Z_s(x_k) }\left(|F(x_k,t)|^2 + p^2(x_k,t) - \widehat{q}^2(x_k,t)\right),
\end{split}
\end{equation}
\begin{equation}\label{eq:identity_pen_2}
\begin{split}
\small
&v(x_{k+1},t) G(x_{k+1}, t) - v(x_{k+1},t) \sigma(x_{k+1},t)  + \frac{1}{Z_s(x_{k+1}) }\sigma(x_{k+1},t)  G(x_{k+1},t) \\
&= \frac{1}{Z_s(x_{k+1}) }\left(|G(x_{k+1}, t)|^2 + q^2(x_{k+1}, t) - \widehat{p}^2(x_{k+1}, t)\right).
\end{split}
\end{equation}
 If we define,
\begin{equation}\label{eq:continuous_energy1}
E^k(t) = \frac{1}{2}\int_{x_k}^{x_{k+1}}{\left({\rho(x)} v^2(x, t) + \frac{1}{\mu(x)}\sigma^2(x, t)\right) dx},
\end{equation}
then we have $E(t) = \sum_{k=1}^{K}E^k(t)$. Thus,  using \eqref{eq:identity_pen_1}-\eqref{eq:identity_pen_2} in the right hand side of \eqref{eq:identity_pen_3} gives
\begin{equation}\label{eq:identity_pen_4}
\begin{split}
&\frac{d}{dt} E(t) = -\sum_{k = 1}^{K}{\left( \frac{1}{Z_s(x_k) }\left(|F(x_k,t)|^2 + p^2(x_k,t) - \widehat{q}^2(x_k,t)\right)\right) }  \\
&-\sum_{k = 1}^{K}{\left(\frac{1}{Z_s(x_{k+1}) }\left(|G(x_{k+1}, t)|^2 + q^2(x_{k+1}, t) - \widehat{p}^2(x_{k+1}, t)\right)\right) }. 
\end{split}
\end{equation}
Using the  identities \eqref{eq:identity_1_bc}--\eqref{eq:identity_3_bc} and \eqref{eq:identity_1}--\eqref{eq:identity_3},  with
\[ 
\widehat{\sigma}(x_k)= \frac{\alpha(x_k) }{\eta(x_k)  + \alpha(x_k) }\Phi(x_k)  , \quad   \llbracket \widehat{v}(x_k) \rrbracket= \frac{1}{\eta(x_k) + \alpha(x_k)}\Phi(x_k),
\]
in the right hand side of \eqref{eq:identity_pen_4} gives the energy identity \eqref{eq:weak_energy} \end{proof}

Since $|r_0|\le 1$, $|r_L|\le 1$ and $ \widehat{\sigma} \llbracket \widehat{v} \rrbracket = \frac{\alpha}{(\eta + \alpha)^2}\Phi^2 \ge 0$, then the boundary terms in the right hand side of \eqref{eq:weak_energy} are negative semi-definite. The term $ \widehat{\sigma} \llbracket \widehat{v} \rrbracket = \frac{\alpha}{(\eta + \alpha)^2}\Phi^2 \ge 0$ represents the rate of work done by friction at the interface, which is dissipated as heat.
Note again that  the flux fluctuations vanish identically  $G(x_{k+1}, t) \equiv 0$,  $F(x_{k}, t) \equiv  0$ for exact solutions, that satisfy the PDE and the boundary and interface conditions, \eqref{eq:BC}  and \eqref{eq:physical_interface}.  Thus, the energy equation \eqref{eq:weak_energy} is completely identical to \eqref{eq:continuous_energy_rate_1}. At the limit $\alpha \to \infty \iff \widehat{\sigma}(x_k)\llbracket \widehat{v}(x_k) \rrbracket \to 0$, we obtain the energy identity \eqref{eq:continuous_energy_rate}.
However, when numerical approximations are introduced the numerical solutions will be accurate up to the truncation error, and $G(x_{k+1}, t) \ne 0$,  $F(x_{k}, t) \ne  0$.  The flux fluctuations, $G(x_{k+1}, t)$,  $F(x_{k}, t)$ will be proportional to the truncation error and will introduce some  numerical dissipation.  However, the numerical dissipation will vanish in the limit of mesh refinement,  $\Delta{x}_k \to 0$ with $\Delta{x}_k = x_{k+1} - x_k$. The remaining terms in the right hand side of \eqref{eq:weak_energy} match exactly the physical energy rate given by the boundary condition  \eqref{eq:BC} and  interface condition \eqref{eq:physical_interface}.

\subsection{The  Galerkin approximation}
Since $(\phi_v(x), \phi_\sigma(x)) \in \mathbb{L}^2(0, L)$ we can selectively choose $(\phi_v(x), \phi_\sigma(x))$ to be nonzero in one element, $ [x_k, x_{k+1}]$, having
\begin{equation}\label{eq:elemental_weak_form_velocity}
\begin{split}
\int_{x_k}^{x_{k+1}} & \left({\rho(x)} \phi_v(x)\frac{\partial v(x, t)}{\partial t}  - \phi_v(x)\frac{\partial \sigma(x, t)}{\partial x} \right) dx + \phi_v(x_k) F(x_k, t) +  \phi_v(x_{k+1})G(x_{k+1}, t) = 0, 
\end{split}
\end{equation}
\begin{equation}\label{eq:elemental_weak_form_stress}
\begin{split}
\int_{x_k}^{x_{k+1}}  &  \left(\frac{1}{\mu(x)}\phi_{\sigma}(x)\frac{\partial \sigma(x, t)}{\partial t}  - \phi_{\sigma}(x)\frac{\partial v(x, t)}{\partial x} \right) dx - \frac{\phi_{\sigma}(x_{k})}{Z_s(x_{k})}F(x_{k}, t) +   \frac{\phi_{\sigma}(x_{k+1})}{Z_s(x_{k+1})}G(x_{k+1}, t) = 0.
\end{split}
\end{equation}

Next, we map the element $[x_k, x_{k+1}]$ to a reference element $\xi \in [-1, 1]$ by the linear transformation 
\begin{align}\label{eq:transf}
x = x_k + \frac{\Delta{x}_k}{2}\left(1 + \xi \right), \quad \Delta{x}_k = x_{k+1} - x_k.
\end{align}
Introducing the linear tranformation \eqref{eq:transf} in the elemental weak form \eqref{eq:elemental_weak_form_velocity}--\eqref{eq:elemental_weak_form_stress}, we have
\begin{align}\label{eq:transf_elemental_weak_form_velocity}
\small
\frac{\Delta{x}_k}{2}\int_{-1}^{1}{\rho(\xi)} \phi_v(x)\frac{\partial v(\xi, t)}{\partial t}d\xi  &= \int_{-1}^{1}\phi_v(\xi)\frac{\partial \sigma(\xi, t)}{\partial \xi} d\xi 
-  \phi_v(-1) F(-1, t) -  \phi_v(1)G(1, t), 
\end{align}
\begin{align}\label{eq:transf_elemental_weak_form_stress}
\small
\frac{\Delta{x}_k}{2}\int_{-1}^{1}\frac{1}{\mu(\xi)}\phi_{\sigma}(\xi)\frac{\partial \sigma(\xi, t)}{\partial t} d\xi  &= \int_{-1}^{1}\phi_{\sigma}(\xi)\frac{\partial v(\xi, t)}{\partial \xi} d\xi  + \frac{\phi_{\sigma}(-1)}{Z_s(-1)}F(-1, t) -   \frac{\phi_{\sigma}(1)}{Z_s(1)}G(1, t) .
\end{align}
Inside the transformed  element  $\xi \in [-1, 1]$, approximate the solution  and material parameters by a polynomial interpolant,  and write 
\begin{equation}\label{eq:variables_elemental}
v^k(\xi, t) = \sum_{j = 1}^{N+1}v_j^k(t) \mathcal{L}_j(\xi), \quad \sigma^k(\xi, t)  = \sum_{j = 1}^{N+1}\sigma_j^k(t) \mathcal{L}_j(\xi),
\end{equation}
\begin{equation}\label{eq:material_elemental}
\rho^k(\xi) = \sum_{j = 1}^{N+1}\rho_j^k \mathcal{L}_j(\xi), \quad \mu^k(\xi) = \sum_{j = 1}^{N+1}\mu_j^k \mathcal{L}_j(\xi),
\end{equation}
where $ \mathcal{L}_j$ is the $j$th interpolating polynomial of degree $N$. If we consider  nodal basis then the interpolating polynomials satisfy $ \mathcal{L}_j(\xi_i) = \delta_{ij}$. 
The interpolating nodes $\xi_i$, $i = 1, 2, \dots, N+1$ are the nodes of a Gauss quadrature with
\begin{equation}
 \sum_{i = 1}^{N+1} f(\xi_i)w_i \approx \int_{-1}^{1}f(\xi) d\xi,
\end{equation}
where $w_i$ are quadrature weights.
We will only use quadrature rules that are exact for all polynomial integrand $f(\xi)$ of degree $\le 2N-1$. Admissible  candidates are Gauss-Lobatto quadrature rule with GLL nodes and Gauss-Legendre quadrature rule with GL nodes. Note that, boundary points $\xi = -1, 1$ are part of   GLL quadrature nodes  while    boundary points $\xi = -1, 1$ are not  part of  GL quadrature nodes. 
The material parameters are interpolated exactly at the quadrature nodes.

 We now make a classical  Galerkin approximation by choosing test functions $(\phi_v(\xi), \phi_\sigma(\xi))$ in the same space as the basis functions, so that the residual is orthogonal to the space of test functions. 
 
Introduce the weighted elemental mass matrix $W^N(a)$ and the stiffness matrix $Q^N $ defined by
{\small
\begin{equation}
W^N_{ij}(a) = \sum_{m = 1}^{N+1} w_m \mathcal{L}_i(\xi_m)  {\mathcal{L}_j(\xi_m)} a(\xi_m), \quad Q^N_{ij} = \sum_{m = 1}^{N+1} w_m \mathcal{L}_i(\xi_m)  {\mathcal{L}_j^{\prime}(\xi_m)}.
\end{equation}
}
For all positive coefficients $a(\xi) > 0$ and quadrature weights $w_m > 0$, the mass matrix is symmetric positive definite, $W^N(a)  = \left(W^N(a)\right)^T > 0$. 
 If we consider  nodal basis $\mathcal{L}_j(\xi)$ with $ \mathcal{L}_j(\xi_i) = \delta_{ij}$, then the mass matrix is diagonal with
 \begin{equation}
W^N_{ij}(a) = w_j a(\xi_j) \delta_{ij}.
\end{equation}
Note that integration-by-parts yields
\begin{equation}\label{eq:ibp_property}
\small
\int_{-1}^{1}\mathcal{L}_i(\xi)  {\mathcal{L}_j^{\prime}(\xi)} d\xi = -\int_{-1}^{1}\mathcal{L}_i^{\prime}(\xi)  {\mathcal{L}_j(\xi)} d\xi + {\mathcal{L}_j(1)}{\mathcal{L}_i(1)} - {\mathcal{L}_j(-1)}{\mathcal{L}_i(-1)}.
\end{equation}
Thus using the fact that the quadrature rule is exact  for all polynomial  intergrand of degree $\le 2N-1$ and defining the transpose of the stiffness matrix
\[
\left(Q^N\right)^T_{ij} = \sum_{m = 1}^{N+1} w_m \mathcal{L}_i^{\prime}(\xi_m)  {\mathcal{L}_j(\xi_m)} = \int_{-1}^{1}\mathcal{L}_i^{\prime}(\xi)  {\mathcal{L}_j(\xi)} d\xi ,
\]
  implies that
\begin{equation}\label{eq:sbp_property_a}
Q^N_{ij} + \left(Q^N\right)^T_{ij} =  B^N_{ij},
\end{equation}
where 
\begin{equation}\label{eq:sbp_property_b}
B^N_{ij} = {\mathcal{L}_j(1)}{\mathcal{L}_i(1)} - {\mathcal{L}_j(-1)}{\mathcal{L}_i(-1)}.
\end{equation}
Equation \eqref{eq:sbp_property_a}-\eqref{eq:sbp_property_b} is the discrete equivalence of the integration-by-parts property \eqref{eq:ibp_property}.
If boundary points $\xi = -1,1$ are quadrature nodes and we consider nodal bases  with $ \mathcal{L}_j(\xi_i) = \delta_{ij}$ then we have $B^N = \text{diag}\left([-1, 0,0, \dots, 0, 1]\right).$ 
In the finite difference literature \cite{DelReyFernandezBoomZingg2014, DuruandDunham2016} equation \eqref{eq:sbp_property_a}-\eqref{eq:sbp_property_b} is analogous to the so-called  summation-by-parts (SBP) property.

The elemental degrees of freedom to be evolved are arranged as vectors of length $N+1$
\[
\boldsymbol{v}^k(t) = [v^k_1(t) , v^k_2(t) , \dots, v^k_{N+1}(t) ]^T, \quad \boldsymbol{\sigma}^k(t)  = [{\sigma}^k_1(t) , {\sigma}^k_2(t) , \dots, {\sigma}^k_{N+1}(t) ]^T.
\]
The evolution equations for the elemental degrees of freedom are a semi-discrete approximation of the IBVP, \eqref{eq:elastic_1D} with \eqref{eq:boundary_conditions} or \eqref{eq:BC} and \eqref{eq:physical_interface}, which can be written as a linear system of ODEs
\begin{equation}\label{eq:elemental_pde1}
\begin{split}
\small
\frac{\Delta{x}_k}{2} W^N ({\boldsymbol{\rho}}^{k}) \frac{d \boldsymbol{v}^k( t)}{ d t} &= Q^N \boldsymbol{\sigma}^k( t) - \boldsymbol{e}_{1}F^k(-1, t)- \boldsymbol{e}_{N+1}G^k(1, t),
\end{split}
\end{equation}
\begin{equation}\label{eq:elemental_pde2}
\begin{split}
\small
\frac{\Delta{x}_k}{2} W^N \left({1}/{\boldsymbol{\mu}^{k}}\right) \frac{d \boldsymbol{\sigma}^k( t)}{ d t} &= Q^N \boldsymbol{v}^k( t)  + \boldsymbol{e}_{1}\frac{1}{Z_{s}^{k}(-1)}F^k(-1, t)- \boldsymbol{e}_{N+1}\frac{1}{Z_{s}^{k}(1)}G^k(1, t),
\end{split}
\end{equation}
where 
\begin{align*}
\boldsymbol{e}_{1} = [ \mathcal{L}_1(-1), \mathcal{L}_2(-1), \dots,  \mathcal{L}_{N+1}(-1) ]^T, \quad  \boldsymbol{e}_{N+1} = [ \mathcal{L}_1(1), \mathcal{L}_2(1), \dots,  \mathcal{L}_{N+1}(1) ]^T,
\end{align*}
and
\begin{align*}
G^k(1, t):= \frac{Z_{s}^{k}(1)}{2} \left(v^{k}(1, t)-\widehat{v}^{k}(1, t) \right) + \frac{1}{2}\left(\sigma^{k}(1, t)- \widehat{\sigma}^{k}(1, t)\right), 
\end{align*}
\begin{align*}
F^{k}(-1, t):= \frac{Z_{s}^{k}(-1)}{2} \left(v^{k}(-1, t)-\widehat{v}^{k}(-1, t) \right) - \frac{1}{2}\left(\sigma^{k}(-1, t)- \widehat{\sigma}^{k}(-1, t)\right).
\end{align*}
Equations \eqref{eq:elemental_pde1}-\eqref{eq:elemental_pde2} are a discontinuous Galerkin approximation of the IBVP, \eqref{eq:elastic_1D} with  \eqref{eq:BC} and \eqref{eq:physical_interface}. 

The hat-variables,  at the element  boundaries, $\xi = -1, 1$, are computed as outlined in sections \ref{subsec:boundary_hat_variables} and \ref{subsec:interface_hat_variables}. 
The only difference is that  instead of the  continuous solutions used in sections \ref{subsec:boundary_hat_variables} and \ref{subsec:interface_hat_variables}, the numerical boundary/interface data for the characteristics are generated using the elemental polynomial approximations, $v^k (\xi, t), \sigma^k(\xi, t)$,   and the approximated material parameters $\rho^k (\xi), \mu^k(\xi)$,    defined in \eqref{eq:variables_elemental}--\eqref{eq:material_elemental}, and evaluated at the boundaries, at $\xi = -1, 1$. However, as before, the discrete hat-variables satisfy the same algebraic identities, \eqref{eq:identity_1_bc}--\eqref{eq:identity_3_bc} and \eqref{eq:identity_1}--\eqref{eq:identity_3},  as the continuous counterparts. 

The system of ODEs \eqref{eq:elemental_pde1}-\eqref{eq:elemental_pde2} is a semi-discrete approximation of the IBVP, \eqref{eq:elastic_1D} with \eqref{eq:boundary_conditions} or \eqref{eq:BC}. For the semi-discrete approximation \eqref{eq:elemental_pde1}--\eqref{eq:elemental_pde2}, the flux fluctuations  will vanish identically, $F^{k}(-1, t) \to 0$, $G^k(1, t) \to 0$, only in the limit of mesh refinement, $\Delta{x}_k \to 0$.

\section{Stability}
In this section, we will prove that the semi-discrete approximation \eqref{eq:elemental_pde1}--\eqref{eq:elemental_pde2} is asymptotically stable. We will derive discrete energy equation analogous to the continuous energy equation \eqref{eq:weak_energy}. To begin, define the elemental discrete energy
\begin{equation}\label{eq:local_energy}
\small
 \mathcal{E}^k(t) = \frac{\Delta{x}_k}{2}\left(\frac{1}{2}\left(\boldsymbol{v}^k( t)\right)^TW^N ({\rho}^{k})  \boldsymbol{v}^k( t) + \frac{1}{2}\left(\boldsymbol{\sigma}^k( t)\right)^TW^N ({1/\mu}^{k})  \boldsymbol{\sigma}^k( t)\right).
\end{equation}
If we consider a nodal polynomial basis $\mathcal{L}_j(\xi)$ with $ \mathcal{L}_j(\xi_i) = \delta_{ij}$, then the mass matrix is diagonal and we have
{\small
\begin{equation}\label{eq:local_energy2}
\small
 \mathcal{E}^k(t) = \frac{\Delta{x}_k}{2}\sum_{j=1}^{N+1}\left(\frac{w_j}{2} \left( {\rho}_j^{k}|{v}_j^k( t)|^2 +  \frac{1 }{\mu^k_j}|\sigma^k_j( t)|^2\right) \right).
\end{equation}
}

If  the semi-discrete energy  $\mathcal{E}(t) = \sum_{k=1}^{K} \mathcal{E}^k(t) $ is never permitted to grow in time for any $\Delta{x} > 0$,  we say that the semi-discrete approximation \eqref{eq:elemental_pde1}-\eqref{eq:elemental_pde2} is asymptotically  stable. We will make this statement more precise with the definition
\begin{definition}\label{def:asymtotic_stability}
Let  $\mathcal{E}(t) = \sum_{k=1}^{K} \mathcal{E}^k(t) $  denote the global semi-discrete energy. The semi-discrete approximation \eqref{eq:elemental_pde1}-\eqref{eq:elemental_pde2} is asymptotically stable if 
\begin{align}
\frac{d}{dt} \mathcal{E}(t) \le 0, \quad \forall \Delta{x} > 0.
\end{align}
\end{definition}

Our second main result is the following theorem:
\begin{theorem}\label{theorem:main_2}
The semi-discrete approximation \eqref{eq:elemental_pde1}-\eqref{eq:elemental_pde2} satisfies the energy equation
{\small
\begin{equation}\label{eq:energy_equation_discrete}
\begin{split}
\frac{d}{dt}\mathcal{E}(t) &= -\sum_{k=1}^{K}\left(\frac{1}{Z_{s}^k(-1)}|F^k(-1, t)|^2 + \frac{1}{Z_{s}^k(1)}|G^k(1, t)|^2\right)   -\sum_{k=2}^{K} \frac{\alpha^k}{\left(\eta^k + \alpha^k\right)^2}|\Phi^k|^2 \\
& - \frac{1-r_0^2}{Z_{s}^1(-1)}|p_0|^2 -\frac{1-r_L^2}{Z_{s}^K(1)}|q_L|^2,
\end{split}
\end{equation}
}
with $ \mathcal{E}(t) = \sum_{k=1}^{K} \mathcal{E}^k(t) $,  and 
\begin{align*}
p_0 = \frac{1}{2}\left(Z_s^1(-1)v^1(-1, t) + \sigma^1(-1, t)\right), \quad q_L = \frac{1}{2}\left(Z_s^K(1) v^K(1, t) - \sigma^K(1, t)\right).
\end{align*}
\end{theorem}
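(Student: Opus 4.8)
The plan is to transcribe the proof of Theorem~\ref{theorem:main_1} to the semi-discrete level, letting the summation-by-parts identity \eqref{eq:sbp_property_a}--\eqref{eq:sbp_property_b} play the exact role that integration by parts played in the continuous argument. The key simplification is that the pointwise algebraic identities \eqref{eq:identity_pen_1}--\eqref{eq:identity_pen_2}, \eqref{eq:identity_1_bc}--\eqref{eq:identity_3_bc} and \eqref{eq:identity_1}--\eqref{eq:identity_3} involve only the traces $v^k(\pm1,t)$, $\sigma^k(\pm1,t)$ and the hat-variables, and these hold verbatim for the discrete hat-variables; hence no approximation error enters the boundary algebra, and the whole computation reduces to reproducing the continuous boundary expressions from the matrix form of the ODEs.

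First I would differentiate \eqref{eq:local_energy}, using symmetry of the mass matrices, to obtain
\[
\frac{d}{dt}\mathcal{E}^k(t) = \frac{\Delta x_k}{2}\left(\left(\boldsymbol{v}^k\right)^T W^N(\rho^k)\frac{d\boldsymbol{v}^k}{dt} + \left(\boldsymbol{\sigma}^k\right)^T W^N(1/\mu^k)\frac{d\boldsymbol{\sigma}^k}{dt}\right).
\]
Substituting the right-hand sides of \eqref{eq:elemental_pde1}--\eqref{eq:elemental_pde2} removes the mass matrices and the factors $\Delta x_k/2$, leaving the two stiffness terms $\left(\boldsymbol{v}^k\right)^T Q^N\boldsymbol{\sigma}^k$ and $\left(\boldsymbol{\sigma}^k\right)^T Q^N\boldsymbol{v}^k$ together with flux-fluctuation contributions in which $\left(\boldsymbol{v}^k\right)^T\boldsymbol{e}_1 = v^k(-1,t)$, $\left(\boldsymbol{v}^k\right)^T\boldsymbol{e}_{N+1}=v^k(1,t)$, and similarly for $\boldsymbol{\sigma}^k$. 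Adding the two stiffness terms and invoking \eqref{eq:sbp_property_a} collapses them to $\left(\boldsymbol{v}^k\right)^T B^N\boldsymbol{\sigma}^k = v^k(1,t)\sigma^k(1,t) - v^k(-1,t)\sigma^k(-1,t)$, which is precisely the boundary term generated by integration by parts in \eqref{eq:velocity_weak_pen_1}--\eqref{eq:stress_weak_pen_1}.

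At this point $\frac{d}{dt}\mathcal{E}^k(t)$ has exactly the summand structure of \eqref{eq:identity_pen_3}, and I would regroup the contributions at the left ($\xi=-1$) and right ($\xi=1$) faces and apply \eqref{eq:identity_pen_1}--\eqref{eq:identity_pen_2} to write each as $-\tfrac{1}{Z_s^k(-1)}\bigl(|F^k(-1,t)|^2 + (p^k)^2 - (\widehat q^k)^2\bigr)$ and $-\tfrac{1}{Z_s^k(1)}\bigl(|G^k(1,t)|^2 + (q^k)^2 - (\widehat p^k)^2\bigr)$. Summing over $k=1,\dots,K$ then produces the fluctuation terms in $Z_s^{-1}(|F^k|^2 + |G^k|^2)$ verbatim, while the remaining characteristic differences must be paired across shared nodes. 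At each interior node $x_k$ ($k=2,\dots,K$), the right face of element $k-1$ and the left face of element $k$ together furnish $\tfrac{1}{Z_s^+}((p^+)^2-(\widehat q^+)^2) + \tfrac{1}{Z_s^-}((q^-)^2-(\widehat p^-)^2)$, which by the interface identity \eqref{eq:identity_3} equals the friction dissipation $\tfrac{\alpha^k}{(\eta^k+\alpha^k)^2}|\Phi^k|^2$; at the two external faces $x_1=0$ and $x_{K+1}=L$, the boundary identities \eqref{eq:identity_2_bc}--\eqref{eq:identity_3_bc} give the terms $\tfrac{1-r_0^2}{Z_s^1(-1)}|p_0|^2$ and $\tfrac{1-r_L^2}{Z_s^K(1)}|q_L|^2$. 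Collecting everything yields \eqref{eq:energy_equation_discrete}.

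I expect the main obstacle to be the interface bookkeeping in this last step: correctly identifying the discrete traces taken from the two neighbours of node $x_k$ with the $+$ and $-$ quantities $p^+,q^-,\Phi^k,\eta^k,\alpha^k$ entering \eqref{eq:identity_3}, and checking the sign conventions so that the $|F|^2$ and $|G|^2$ terms survive as the genuinely dissipative residual rather than being absorbed. A secondary point to verify is that the single SBP relation \eqref{eq:sbp_property_a} handles the GLL and GL node families uniformly: although $B^N$ is diagonal only for GLL nodes, the identity $\left(\boldsymbol{v}^k\right)^T B^N\boldsymbol{\sigma}^k = v^k(1,t)\sigma^k(1,t)-v^k(-1,t)\sigma^k(-1,t)$ follows directly from \eqref{eq:sbp_property_b} in both cases, so no separate argument is needed. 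Everything else is a direct transcription of the continuous computation.
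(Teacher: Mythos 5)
Your proposal is correct and follows essentially the same route as the paper's own proof: multiply the semi-discrete equations by the solution vectors, use the SBP relation \eqref{eq:sbp_property_a}--\eqref{eq:sbp_property_b} to reduce the stiffness terms to the trace product $v^k(1,t)\sigma^k(1,t)-v^k(-1,t)\sigma^k(-1,t)$, apply the pointwise identities \eqref{eq:identity_pen_11}--\eqref{eq:identity_pen_22}, and then sum over elements using \eqref{eq:identity_2_bc}--\eqref{eq:identity_3_bc} and \eqref{eq:identity_2}--\eqref{eq:identity_3} at the external and internal faces. The only cosmetic difference is that the paper applies the discrete integration by parts to the velocity equation alone and cancels $-\left(\boldsymbol{v}^k\right)^T\left(Q^N\right)^T\boldsymbol{\sigma}^k$ against $\left(\boldsymbol{\sigma}^k\right)^TQ^N\boldsymbol{v}^k$, whereas you add the two stiffness terms first and invoke $Q^N+\left(Q^N\right)^T=B^N$ directly; these are the same computation.
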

\begin{proof}
The derivation of the energy equation \eqref{eq:energy_equation_discrete} follows from standard energy method and calculations. That is, from the left we multiply equation \eqref{eq:elemental_pde1} by $[{v}^k_1( t), {v}^k_2( t), \dots, {v}^k_{N+1}( t)]^T$ and  \eqref{eq:elemental_pde2} by $[{\sigma}^k_1( t), {\sigma}^k_2( t), \dots, {\sigma}^k_{N+1}( t)]^T$.  We use   the discrete integration-by-parts property \eqref{eq:sbp_property_a}--\eqref{eq:sbp_property_b} in the velocity equation \eqref{eq:elemental_pde1} only, having
\begin{equation}\label{eq:elemental_pde1_energy}
\begin{split}
\small
&\frac{\Delta{x}_k}{2} \boldsymbol{v}^k( t)^TW^N ({\rho}^{k}) \frac{d \boldsymbol{v}^k( t)}{ d t} = -\boldsymbol{v}^k( t)^T\left(Q^N\right)^T \boldsymbol{\sigma}^k( t) -{v}^k(-1, t){\sigma}^k(-1, t) \\
& + {v}^k(1, t){\sigma}^k(1, t) - {v}^k(-1, t)F^k(-1, t)- {v}^k(1, t)G^k(1, t),
\end{split}
\end{equation}
\begin{equation}\label{eq:elemental_pde2_energy}
\begin{split}
\small
&\frac{\Delta{x}_k}{2} \boldsymbol{\sigma}^k( t)^TW^N \left({1}/{{\mu}^{k}}\right) \frac{d \boldsymbol{\sigma}^k( t)}{ d t} = \boldsymbol{\sigma}^k( t)^TQ^N \boldsymbol{v}^k( t) \\
& + \frac{1}{Z_{s}^{k}(-1)}{\sigma}^k( -1, t)F^k(-1, t)- \frac{1}{Z_{s}^{k}(1)}{\sigma}^k( 1, t)G^k(1, t).
\end{split}
\end{equation}
Then summing the products, \eqref{eq:elemental_pde1_energy} and \eqref{eq:elemental_pde2_energy} together, in the right hand side, the interior terms cancel, leaving   the element boundary terms only, having
\begin{equation}\label{eq:local_energy_pde}
\begin{split}
\small
 &\frac{d}{dt}\left[\frac{\Delta{x}_k}{2}\left(\frac{1}{2}\left(\boldsymbol{v}^k( t)\right)^TW^N ({\rho}^{k})  \boldsymbol{v}^k( t) + \frac{1}{2}\left(\boldsymbol{\sigma}^k( t)\right)^TW^N ({1/\mu}^{k})  \boldsymbol{\sigma}^k( t)\right)\right] = \\
 &-{v}^k(-1, t){\sigma}^k(-1, t) + {v}^k(1, t){\sigma}^k(1, t) - {v}^k(-1, t)F^k(-1, t)- {v}^k(1, t)G^k(1, t)\\
 & + \frac{1}{Z_{s}^{k}(-1)}{\sigma}^k( -1, t)F^k(-1, t)- \frac{1}{Z_{s}^{k}(1)}{\sigma}^k( 1, t)G^k(1, t).
 \end{split}
\end{equation}
In the left hand side of \eqref{eq:local_energy_pde}, we recognize the  elemental semi-discrete energy   $\mathcal{E}^k(t)$ defined in \eqref{eq:local_energy}.
As in \eqref{eq:identity_pen_1}-\eqref{eq:identity_pen_2}, note again that
\begin{equation}\label{eq:identity_pen_11}
\begin{split}
\small
&v^k(-1,t) F^k(-1,t) + v^k(-1,t) \sigma^k(-1,t)  - \frac{1}{Z_s^k(-1) }\sigma^k(-1,t)  F^k(-1,t) \\
&=\frac{1}{Z_s^k(-1) }\left(|F^k(-1,t)|^2 + \left(p^k(-1, t)\right)^2 - \left(\widehat{q}^k(-1, t)\right)^2\right),
 \end{split}
\end{equation}
\begin{equation}\label{eq:identity_pen_22}
\begin{split}
\small
&v^k(1,t) G^k(1, t) - v^k(1,t) \sigma^k(1,t)  + \frac{1}{Z_s^k(1) }\sigma^k(1,t)  G^k(1,t)  \\
&=\frac{1}{Z_s^k(1) }\left(|G^k(1, t)|^2 + \left(q^k(1, t)\right)^2 - \left(\widehat{p}^k(1, t)\right)^2\right).
 \end{split}
\end{equation}
Thus, using \eqref{eq:identity_pen_11}-\eqref{eq:identity_pen_22} in the right hand side of \eqref{eq:local_energy_pde}, yields
\begin{equation}\label{eq:local_energy_pde2}
\begin{split}
\small
 \frac{d}{dt}\mathcal{E}^k(t)  = &-\frac{1}{Z_s^k(-1) }\left(|F^k(-1,t)|^2 + \left(p^k(-1, t)\right)^2 - \left(\widehat{q}^k(-1, t)\right)^2\right) \\
 &- \frac{1}{Z_s^k(1) }\left(|G^k(1, t)|^2 + \left(q^k(1, t)\right)^2 - \left(\widehat{p}^k(1, t)\right)^2\right).
 \end{split}
\end{equation}
 Adding contributions from all elements and using \eqref{eq:sol_linear_equation}, that is
 \[
 \widehat{\sigma}^{k+1}(-1, t) = \widehat{\sigma}^{k}(1, t) = \widehat{\sigma}^k = \frac{\alpha^k}{\eta^k + \alpha^k} \Phi^k, \quad   \widehat{v}^{k+1}(-1, t) - \widehat{v}^{k}(1, t):= \llbracket \widehat{v}^k \rrbracket = \frac{1}{\eta^k + \alpha^k} \Phi^k,
 \]
  and  the identities \eqref{eq:identity_2_bc}--\eqref{eq:identity_3_bc}  and  \eqref{eq:identity_2}--\eqref{eq:identity_3}  
 gives the energy equation \eqref{eq:energy_equation_discrete}.
 \end{proof} 
  
  The  energy equation \eqref{eq:energy_equation_discrete} is completely analogous to the continuous equation \eqref{eq:weak_energy} and \eqref{eq:continuous_energy_rate_1}. 
  Note that  the quantity  in the right hand side of \eqref{eq:energy_equation_discrete} are surface terms, and their units match energy-rate per surface area. Thus, we have generated numerical  data in a manner that is consistent with physical laws and  enforced element boundary data using characteristics, the natural carrier of information in the system.  Note that {\small $\widehat{\sigma}^k {\lJump  \widehat{v}^k  \rJump} =  \frac{\alpha^k}{(\eta^k + \alpha^k)^2}|\Phi^k|^2 \to 0$}, for {\small $\alpha^k \to \infty$ or $\alpha^k \to 0$}. This implies that the spectral radius of the discrete operator has an upper bound which is independent of $\alpha^k \ge 0$. If we had used characteristics to directly enforce the physical condition \eqref{eq:physical_interface},  we will have  {\small$ \sigma^k \lJump  {v}^k  \rJump  = \alpha^k \lJump  {v}^k  \rJump ^2\ge 0$}. The semi-discrete approximation  will yield an energy estimate, however, it will potentially introduce artificial numerical stiffness, for $\alpha^k \gg 1$, which will require implicit time integration, for practical problems.   
 Note that the energy equation \eqref{eq:energy_equation_discrete}  is valid for both nodal and modal polynomial basis,  and for  Gauss-Lobatto-Legendre nodes, with boundary points as quadrature nodes, and Gauss-Legendre nodes, where boundary points are not quadrature nodes.
 \begin{remark}
 Theorems \ref{theorem:main_1} and \ref{theorem:main_2} prove both asymptotic stability and robustness of the method.
 Note that all  internal DG element faces are frictional interfaces. In principle, we can allow all adjacent elements to slide against each other, and the frictional slip motion  governed by a nonlinear friction law.
 This will not affect stability, but will increase energy decay rate, due to work done by friction which is dissipated as heat.
 One major outcome of our approach is that it yields a unified provable stable and robust adaptive DG framework for the numerical treatment of  nonlinear frictional source terms accommodating slip, classical DG inter-element interfaces where slip is not permitted, and external well-posed boundary conditions modeling various geophysical phenomena.
 \end{remark}

The analysis here focuses on a 1D model problem, however with limited modifications the results can be extended to multidimensional (2D and 3D) tensor product DG method approximations of the elastic wave equation on quadrilateral and hexahedral meshes, and also on triangular and tetrahedral  meshes.

\section{Numerical experiments}
Here, we perform numerical experiments to verify numerical stability and accuracy.  Lagrange polynomial bases are used with GLL and GL quadrature nodes, separately. Numerical solutions are evolved in time using the high order  ADER  scheme \cite{DumbserPeshkovRomenski, delaPuenteAmpueroKaser2009, PeltiesdelaPuenteAmpueroBrietzkeKaser2012}  of the same order of accuracy with the spatial discretization. 
Thus, for polynomial approximations of degree $N$, we will expect optimal asymptotic convergence rate of $N+1$. We will proceed later to a 2D model problem, make comparisons with the Rusanov flux and verify accuracy for Rayleigh surface waves. Finally, we  will present numerical experiments, in 2D, demonstrating the extension of our method to curvilinear elements and potentials for propagating ruptures on dynamically adaptive meshes.

\subsection{One space dimension}
We will now present numerical examples in 1D. We will begin with wave propagation in a heterogeneous medium, and lock all interior element boundaries, {\small $\alpha \to \infty$}.
Next we consider a dynamic rupture model, where a nonlinear frictional fault  is present. The fault will be governed by a slip-weakening friction law \cite{Andrews1985}.
\subsubsection{Wave propagation in a heterogeneous medium}
We consider a 1D domain, $0 \le x \le L = 10$ km, with the heterogeneous shear wave velocity profile   $c_s = c_0 + c_{\epsilon}(x)$.  The component $c_0$ is a mean velocity and the perturbation component $c_{\epsilon}(x)$ models small scale heterogeneity.  We use the mean shear wave velocity $c_0 = 3343$ m/s, density $\rho = 2700$ $\mathrm{kg/m^3}$, typical for crustal rocks, and set $c_{\epsilon}(x) = \epsilon\sin(n\pi x/L)$. The velocity perturbation oscillates $n=20$ times in the domain, with the amplitude $\epsilon =0.1$ km/s. Note that we can extract the shear modulus ${\mu}(x) = \rho(x)c_s^2(x)$.
  
We have chosen the initial and  boundary conditions to match the exact solution
\begin{equation}\label{eq:exact_solution}
v_e(x,t) = \cos\left({k\pi}t\right) \sin\left(\frac{n}{L}\pi x + a_0\right), \quad \sigma_e(x,t) = \frac{n}{Lk}\sin\left({k\pi}t\right) \cos\left(\frac{n}{L}\pi x + a_0\right).
\end{equation}
We chose the phase shift $a_0 = 10$, temporal wave number $k = 2$ $s^{-1}$, and the spatial wave number $n/L = 2$ ${km}^{-1}$, so that the wavelength  is in consonance with that of the small scale  heterogeneity. At the left boundary $x = 0$ we set a traction boundary condition $\sigma(0, t) = \sigma_e(0, t)$, and at the right boundary $x = L$,  we set a velocity boundary condition $v(L, t) = v_e(L, t)$. The boundary conditions are implemented weakly as discussed in previous sections.

We discretize the domain with  uniform  elements of size $ \Delta{x} = {L}/{K}$ km, where $K$ is the number of elements used. It is  important to note that the material parameters vary arbitrarily within each element. The numerical experiments shown here are performed with Lagrange  polynomial bases of degree $N = 2, 4, 6, 8, 10$. We set the time step 
\begin{align}\label{eq:explicit_time_step}
\Delta{t} = \frac{CFL}{\max_{\it{x}}(c_s( \it{x} ))(2N+1)}\Delta{x} ,  \quad \text{with}~\ {CFL} = 0.5.
\end{align}
To begin, we use  a polynomial degree $N = 4$,  and set the number of elements $K = 80$, resulting in $400$ degrees of freedom, for each unknown field, to be evolved in time. This yields 8 elements per wavelength. We evolve the solutions, for a long time, until  $t = 100$ s. 
The numerical relative error at $t = t_n$ is defined by
\begin{equation}\label{eq:L2_Error}
\mathrm{error}(t_n) =\frac{\sqrt{\sum_i\left(|v_i^n-v_e(x_i, t_n)|^2 + |\sigma_i^n-\sigma_e(x_i, t_n)|^2\right)}}{\max_{t_n}\sqrt{\left(\sum_i\left(|v_e(x_i, t_n)|^2 + |\sigma_e(x_i, t_n)|^2\right)\right)}},
\end{equation}
where $v_i^n, \sigma_i^n$ is the numerical solution and $v_e(x_i, t_n), \sigma_e(x_i, t_n)$ is the exact solution at $x = x_i$, $t = t_n$.
The numerical solution (at $t = 100$ s) superimposed with  the analytical solution, and the error are plotted in Figure \ref{fig:MMS}. Note that the  error is bounded  for the entire simulation time. Numerical errors resulting from the GLL nodes   differ from  the numerical errors  from the GL nodes by a factor 4.  The bounded  error in Figure \ref{fig:MMS} results from the discrete energy estimate \eqref{eq:energy_equation_discrete},  and  the upwind property of our numerical flux. This is consistent with the analysis in \cite{KoprivaNordstromGassner2016} for the 1D scalar advection equation.

\begin{figure}[h!]
\begin{subfigure}
    \centering
\stackunder[5pt]{\includegraphics[width=0.495\textwidth]{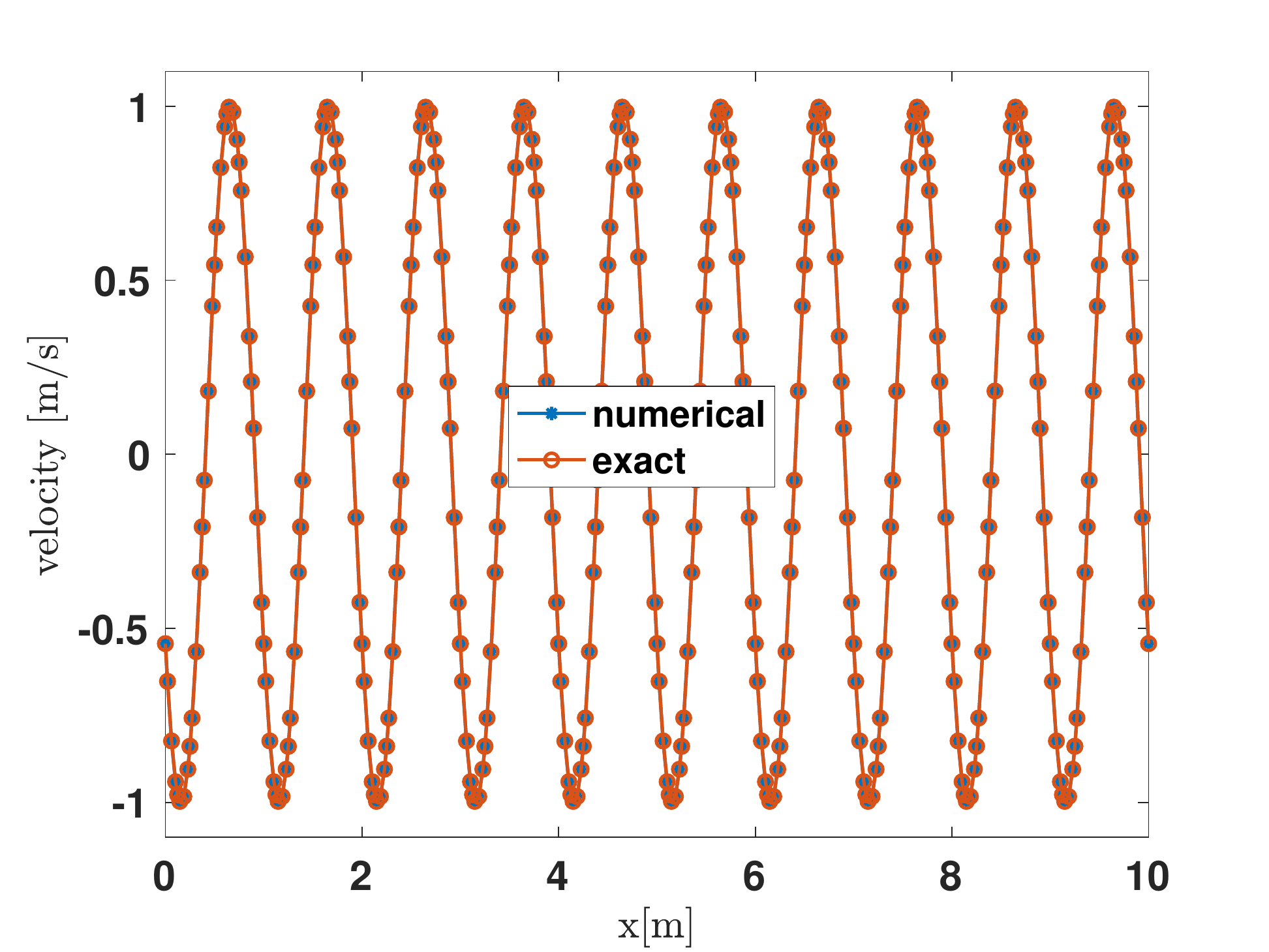}}{Numerical and exact solutions at $t = 100$ $s$.}%
\hspace{0.0cm}%
\stackunder[5pt]{\includegraphics[width=0.475\textwidth]{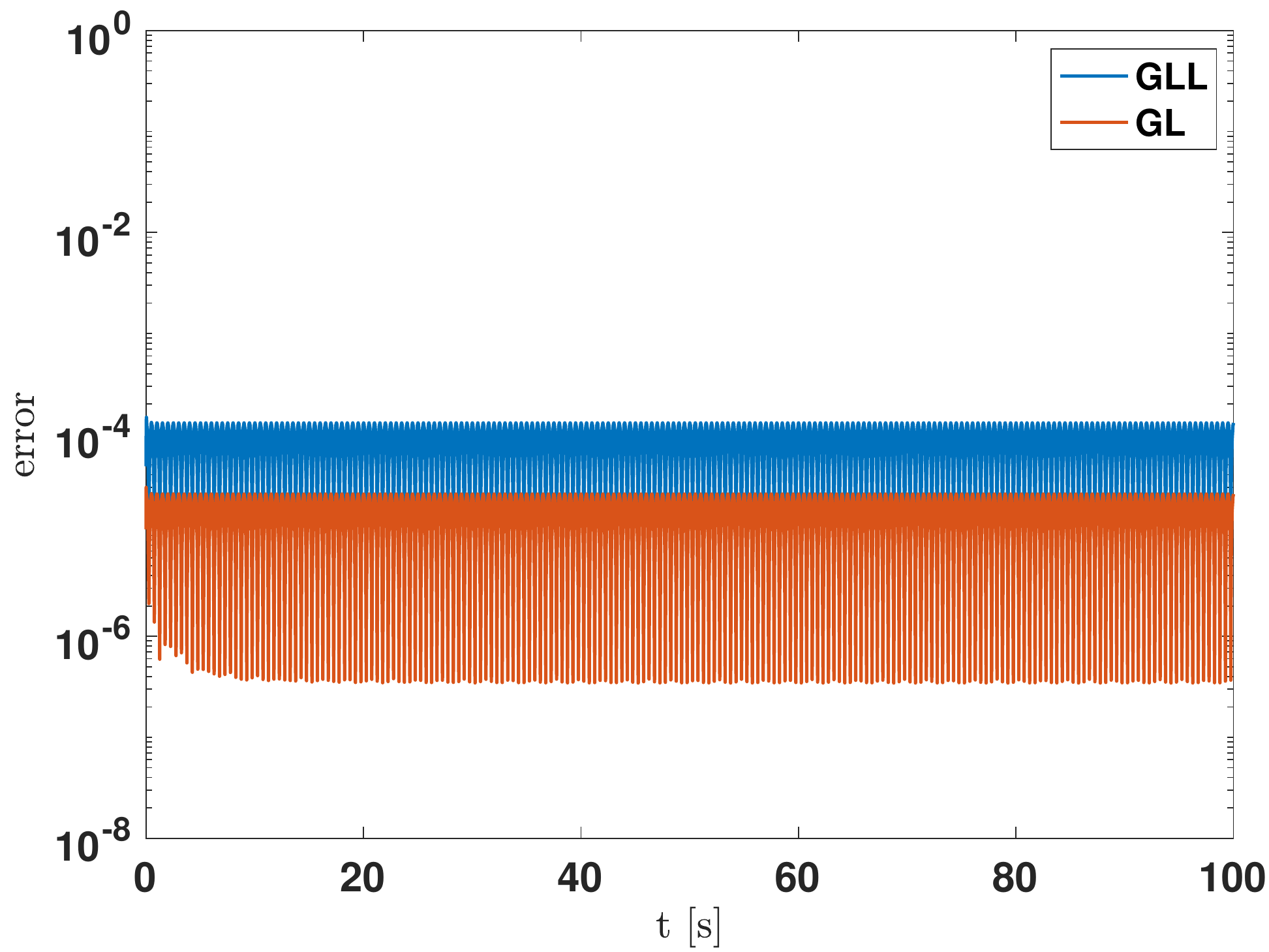}}{Time history of the  numerical error.}%
     \end{subfigure}
    \caption{ Particle velocity at $t = 100$ $s$ and time history of the  numerical error using a ${N = 4}$ (polynomial degree) and $K = 80$ number of elements.}
    \label{fig:MMS}
\end{figure}

We have run the simulation again for various resolutions. The time history of the numerical errors are shown in Figure \ref{fig:TH}. 
\begin{figure}[h!]
\begin{subfigure}
    \centering
\stackunder[5pt]{\includegraphics[width=0.475\textwidth]{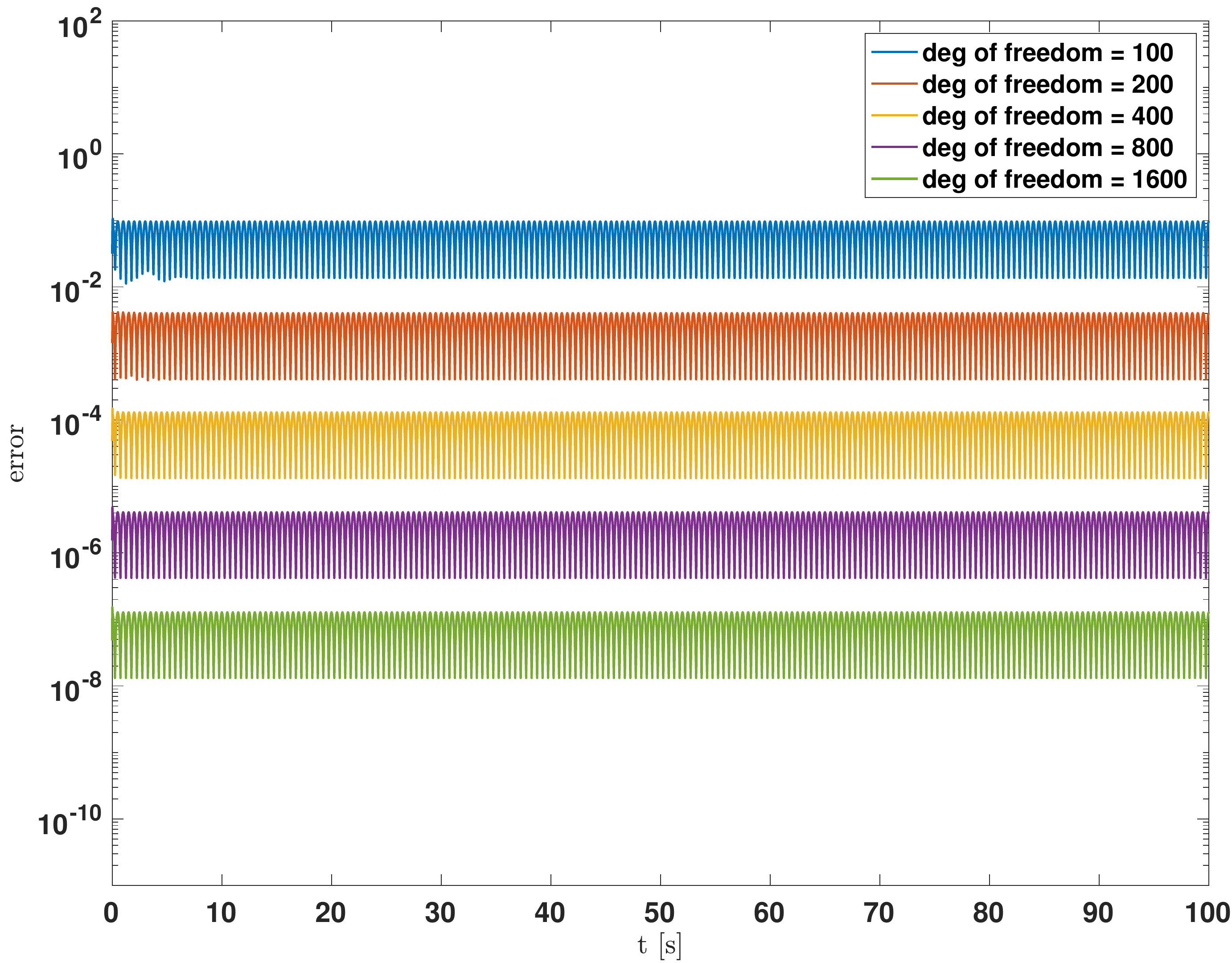}}{GLL nodes.}%
\hspace{0.0cm}%
\stackunder[5pt]{\includegraphics[width=0.475\textwidth]{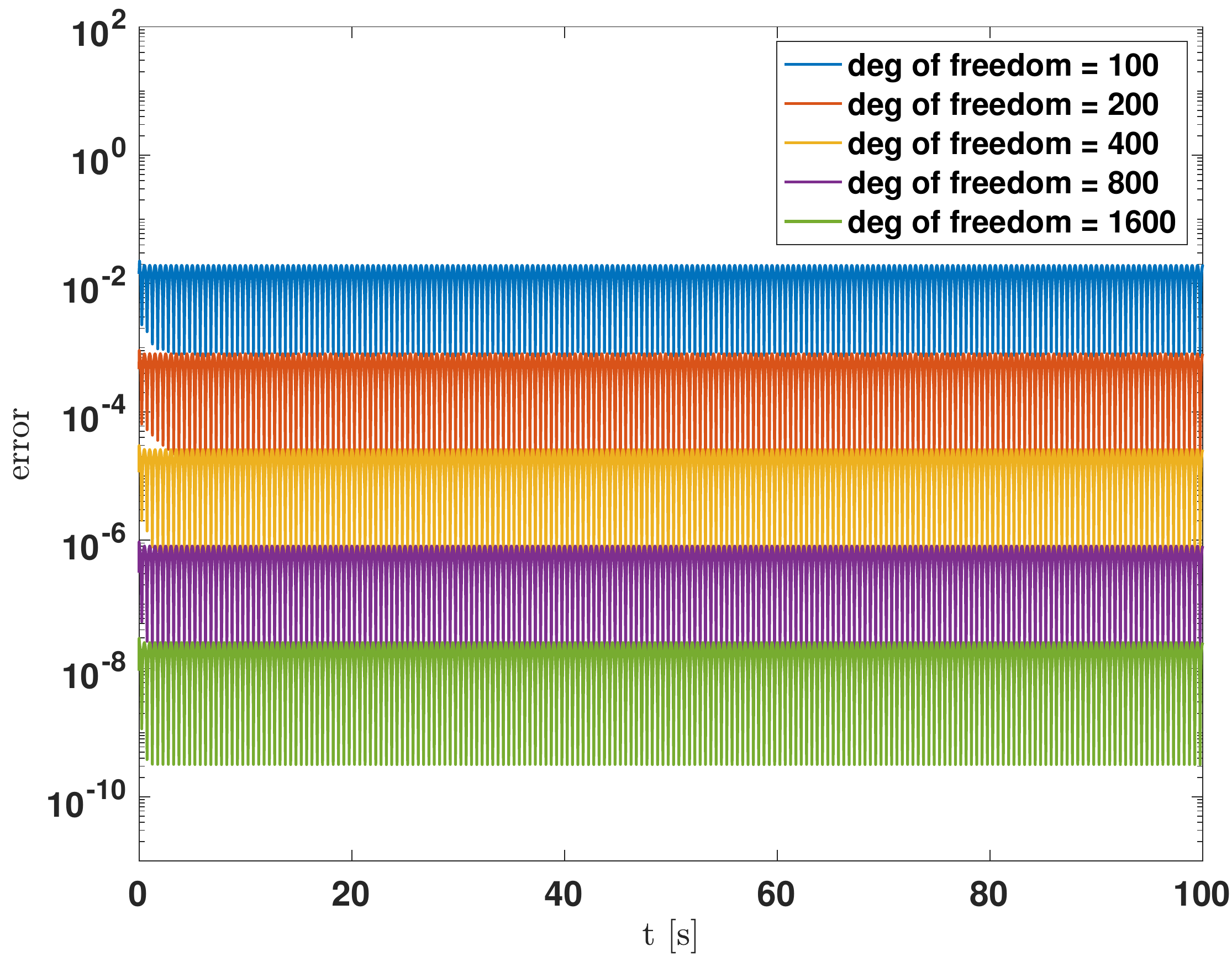}}{GL nodes.}%
     \end{subfigure}
    \caption{ Time history of numerical errors at different resolutions.}
    \label{fig:TH}
\end{figure}
\begin{table}[h!]
\centering
\begin{tabular}{c |  c | c | c | c  }
 dof & error(GLL)&  rate(GLL) & error(GL)&  rate(GL)  \\
\hline
100&9.6094e-02 & -- & {1.9066e-02} & {--} \\
200&  4.0376e-03 & 4.5729 & {8.0204e-04} & {4.5712}  \\
400&1.3010e-04 & 4.9556 & { 2.5693e-05} & {4.9642}  \\
800&4.0939e-06 & 4.9900 & {8.0751e-07} & {4.9917}  \\
1600& 1.2816e-07 & 4.9975 & {2.5271e-08} & {4.9979} \\
\hline
\end{tabular}
\caption{Numerical errors and convergence rate at $t = 100$ $s$. }
\label{tab:elastic_L0}
\end{table}
In Table \ref{tab:elastic_L0}, the numerical errors, at the final time $t = 100$ $s$, and the convergence rate are shown for different resolutions. Note that the rates of convergence is ${N+1}$, which is optimal. 
We also run the simulation with the number of elements fixed, $ K = 80$, and vary  polynomial degrees as $ N = 2, 4, 6, 8, 10.$
Spectral convergence of the discretization error is shown in Figure \ref{fig:spec_rate}.
\begin{figure}[h!]
    \centering
         {\includegraphics[width=0.5\textwidth]{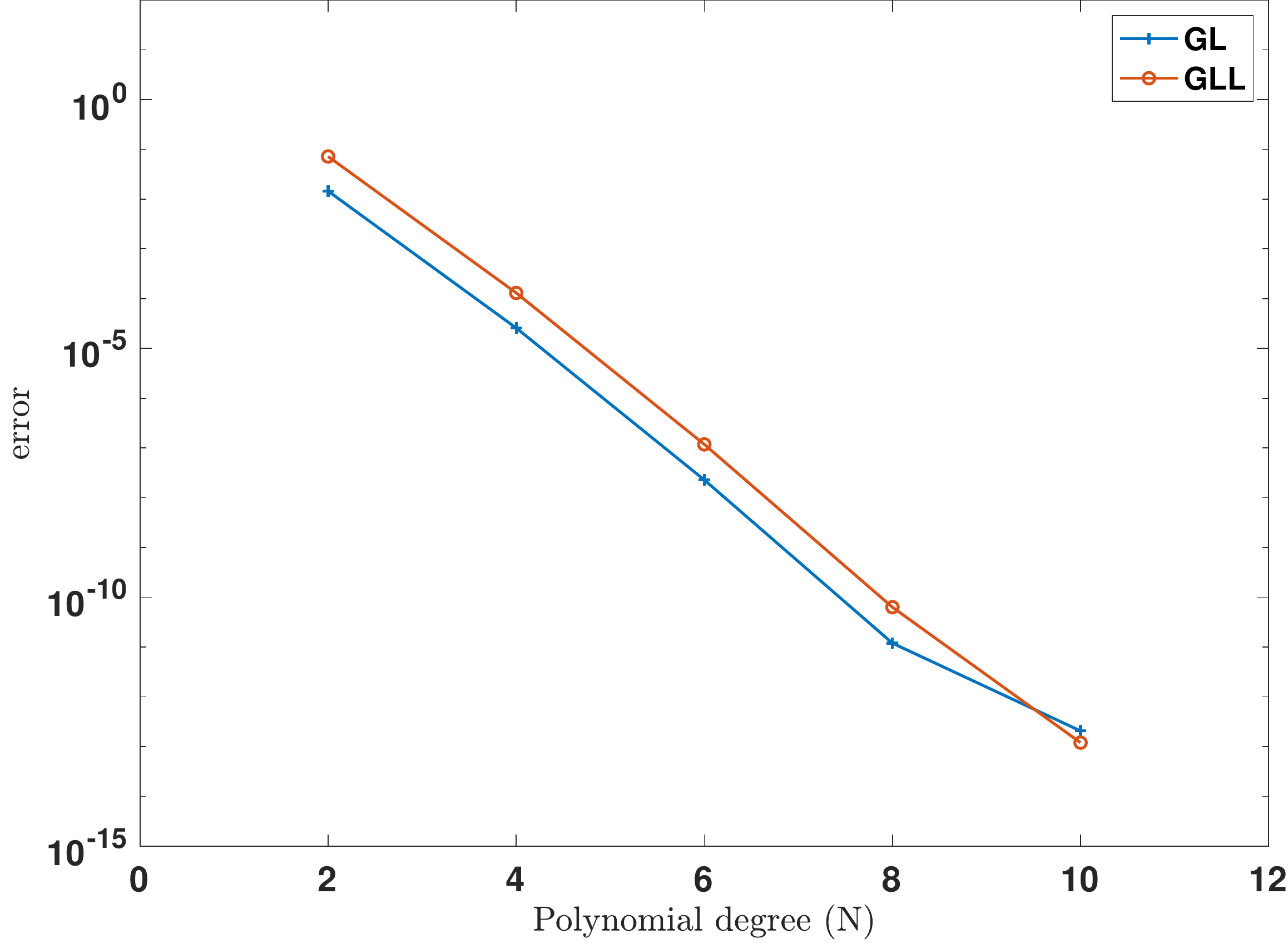}}  
    \caption{ Spectral convergence rate at $ t= 100 $ $s$.}
    \label{fig:spec_rate}
\end{figure}
\subsubsection{Dynamic rupture in 1D}
We will now consider an idealized dynamic earthquake rupture in 1D.
 The  domain is $0 \le x \le L = 60$ km, with homogeneous material properties,  $c_s = 3464$ m/s,  $\rho = 2670$ $\mathrm{kg/m^3}$, and  ${\mu} = \rho c_s^2 = 32.0381$ GPa.
 There is a fault at the middle of the domain, $x = 30$ km, with finite frictional strength. The two elastic solids separated by the fault are held together by a finite but high level frictional resistance.
The  nonlinear friction coefficient is prescribed by the slip-weakening friction law 
\begin{equation}\label{eq:slip-weakening}
\begin{split}
& f\left( S\right)  = \left \{
\begin{array}{rl}
 f_s -\left(f_s-f_d\right)\frac{ {S}}{d_c},  & \text{if}  \quad {S} \le d_c,\\
f_d, \quad {}  \quad {}& \text{if} \quad {S} \ge d_c,
\end{array} \right.
\end{split}
\end{equation}
where $ f_s$ and $ f_d$ are the static and dynamic friction coefficients, $d_c$ is the critical displacement and  the slip $S$ evolves according to
\begin{align}
\frac{dS}{dt} = V,
\end{align}
where $V = |\llbracket {v} \rrbracket|$ is the slip-rate. 
We introduce the peak frictional strength on the fault $\tau_p =  f_s \sigma_n$ and the residual  frictional strength on the fault $\tau_r =  f_d \sigma_n$, where $\sigma_n > 0$ is the compressive normal stress.
By \eqref{eq:slip-weakening}, as soon as the load on the fault exceeds the peak strength $\tau_p$, the fault will begin to slip and the strength on the fault will weaken linearly with slip $S$,  until slip  reaches the critical displacement  $S = d_c$. When the fault is fully weakened  the strength on the fault takes the value of the residual strength $\tau_r$. For the this simple 1D model, there is no mechanism to arrest ruptures. So once the fault nucleates it will slip forever.
 
 The parameter of friction are given in Table \ref{tab:friction} below, see also \cite{Harris2018}.
\begin{table}[h!]
\centering
\begin{tabular}{c |  c | c | c | c  }
 $f_s$ & $f_d$&  $d_c$[m] & $\sigma_n$[MPa] &  $\tau_0$[MPa]  \\
\hline
 0.677& 0.525& 0.4  & 120 & 81.6\\
\hline
\end{tabular}
\caption{Friction parameters.}
\label{tab:friction}
\end{table}
Note that $\tau_0 = 81.6 ~\ \text{MPa}$ is the initial load, and
$\tau_p =  f_s \sigma_n = 81.24~\ \text{MPa}$ and $\tau_r =  f_d \sigma_n = 63~\ \text{MPa}$ .
By the choice of the parameters, in Table \ref{tab:friction}, at the initial time the load will already exceed the peak strength $\tau_0 > \tau_p$. The initiation of rupture will be instantaneous and explosive.

We discretize the domain $[0, L]$ into 400 DG elements, and consider degree $N = 3$ polynomial approximation on GL nodes. Note that the effective grid spacing is $h = \Delta{x}/(N+1) = L/1600$.
As we have noted, the interface at $x = 30$ km is governed by the slip-weakening friction law, with the parameters given in Table \ref{tab:friction}. Except the interface at $x = 30$ km, every other DG interfaces are locked with infinite frictional strength, $\alpha \to \infty$. We use the time-step \eqref{eq:explicit_time_step}, and run the simulation for $t = 8$ s. 

In order to make a comparison, we perform numerical simulations with a SBP finite difference scheme, with the uniform grid spacing, $h = L/1600$. The SBP operator is 6th order accurate in the interior with 3rd order accurate boundary closure, yielding a 4th order accurate scheme globally.  As opposed to the DG method, where every inter-element boundary is a frictional interface,  in the finite difference scheme friction is only present at the fault $x = 30$ km.

In Figure \ref{fig:dynamic_rupture_fault}, we display the evolution of the slip-rate $V$, the shear stress $\tau$ on the fault and the fault slip $S$.
Note that the nucleation stage is explosive. That is, the shear stress weakens exponentially and the slip-rate increases exponentially. The fault accelerates until the fault is fully weakened, $\tau = \tau_r$ and the slip-rate reaches a constant value $V \sim 4$ m/s.
The fault continues slipping,  at a constant slip-rate $V \sim 4$ m/s, until the simulation is terminated.  
We also note that the results of both schemes are very similarly. In particular, the final slip predicted by the two algorithms are identical.
\begin{figure}[h!]
\begin{subfigure}
    \centering
       \stackunder[5pt]{\includegraphics[width=0.475\textwidth]{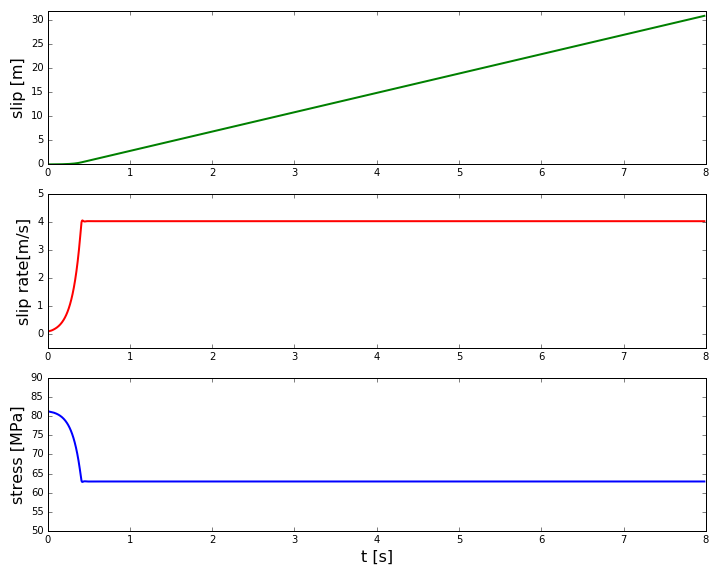}}{DG method.}
        \stackunder[5pt]{\includegraphics[width=0.475\textwidth]{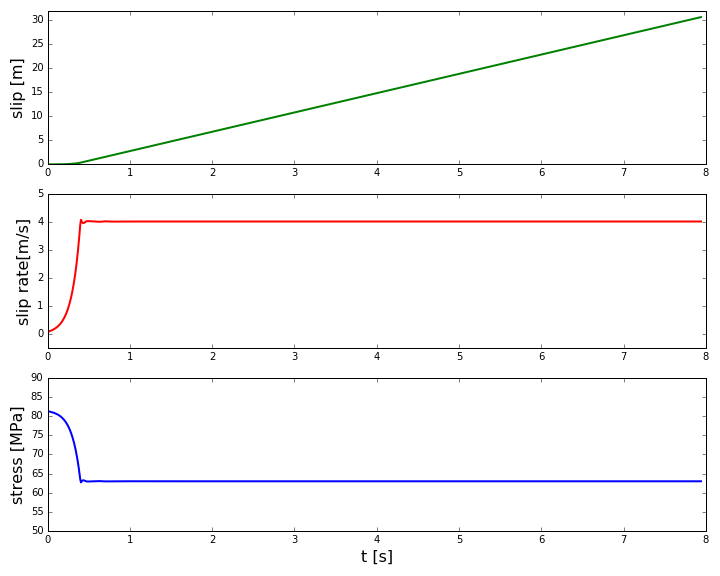}}{SBP method.}
     \end{subfigure}
    \caption{Evolution of the slip-rate $V$, the shear stress $\tau$ on the fault and the fault slip $S$.}
    \label{fig:dynamic_rupture_fault}
\end{figure}

Snapshots of the particle velocity and the stress are shown in Figure \ref{fig:snapshots_dynamic_rupture}. The stress is continuous across the fault interface, but the stress drop $\Delta{\tau} = \tau_0-\tau_r$ propagates from the fault into the adjacent elastic solids. 
The particle velocity is discontinuous across the interface. The discontinuity is the measure of the slip-rate $V$, and it is carried by outgoing wave radiations into the elastic solids.

\begin{figure}[h!]
\begin{subfigure}
    \centering
       \stackunder[5pt]{\includegraphics[width=0.33\textwidth]{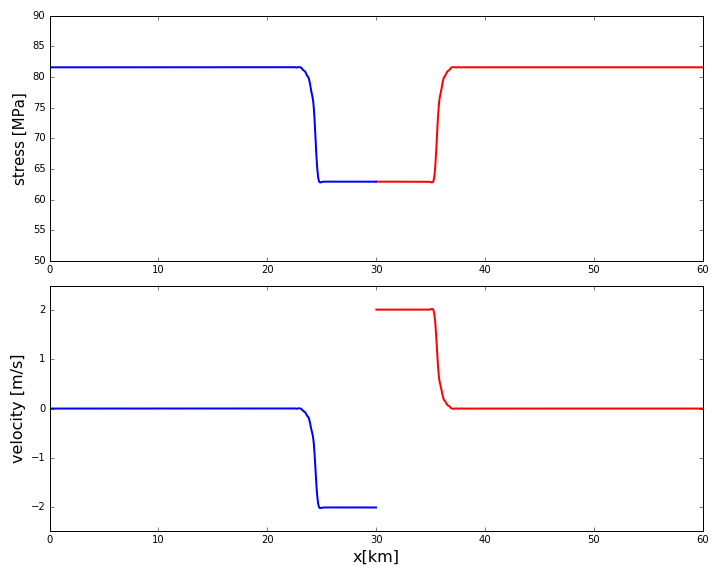}}{$t = 2$ s.}
        \stackunder[5pt]{\includegraphics[width=0.33\textwidth]{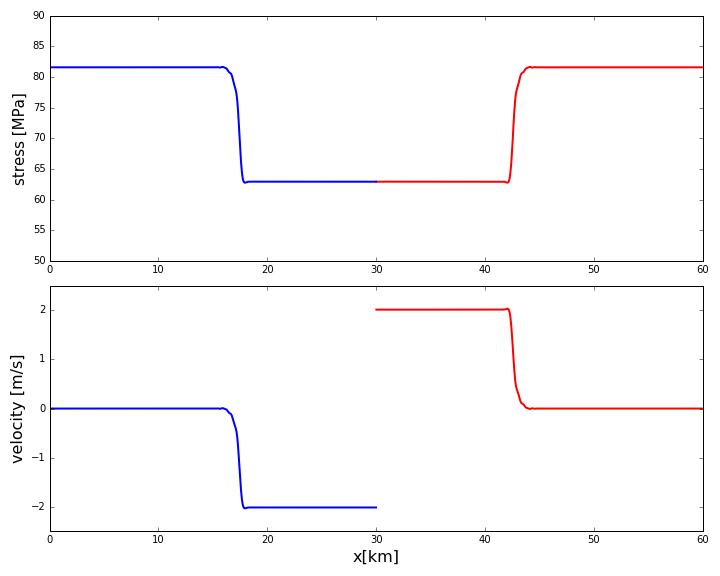}}{$t = 4$ s.}
        \stackunder[5pt]{\includegraphics[width=0.33\textwidth]{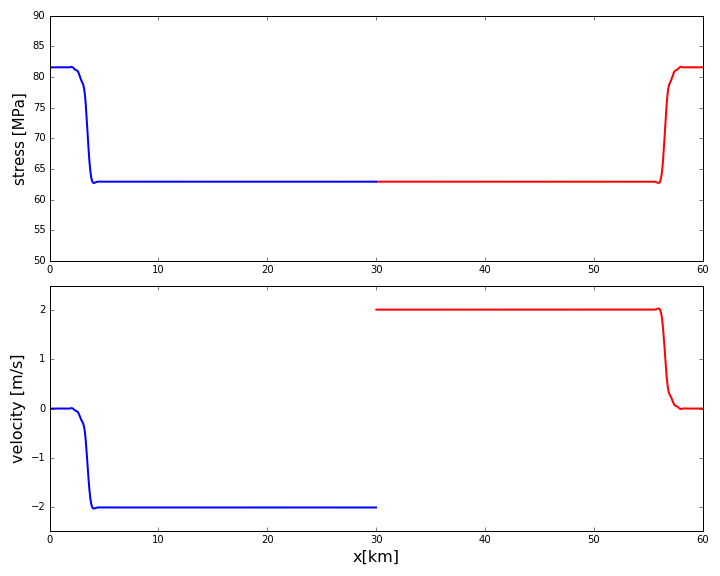}}{$t = 8$ s.}
     \end{subfigure}
    \caption{Snapshots of the velocity and the stress fields using the DG method.}
    \label{fig:snapshots_dynamic_rupture}
\end{figure}

In Figure \ref{fig:snapshots_dynamic_rupture_sbp_vs_dg}, we compare the wave fields for the DG method and SBP method at $t = 8$ s.
Note that the two solutions are similar. However, for the SBP method there are high frequency oscillations trailing the discontinuities.
In 2D and 3D  the oscillations will not only be present in the medium, it will also be present on the fault surface because of large (temporal and spatial) gradients in the slip-rates and stress fields.

\begin{figure}[h!]
\begin{subfigure}
    \centering
         \stackunder[5pt]{\includegraphics[width=0.475\textwidth]{ArXiv/dg_body_o3_t8s.png}}{DG method.}
        \stackunder[5pt]{\includegraphics[width=0.475\textwidth]{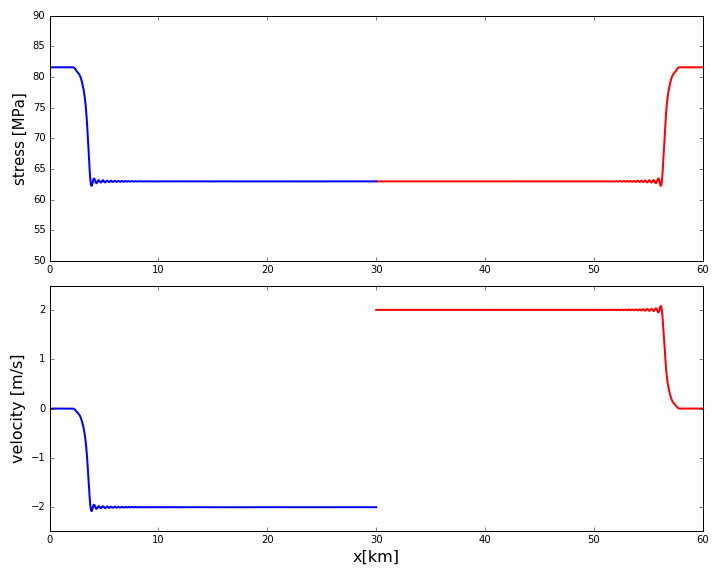}}{SBP method.}
     \end{subfigure}
    \caption{A comparison of the velocity and the stress fields  at $t = 8$ s.}
    \label{fig:snapshots_dynamic_rupture_sbp_vs_dg}
\end{figure}

\subsection{Two space dimensions}
Here, we perform numerical experiments in 2D and make comparisons with the Rusanov flux DG method \cite{DumbserPeshkovRomenski}. 
We present simulations on a curvilinear mesh, and propagate dynamic  earthquake ruptures on a dynamically adaptive Cartesian mesh.

\subsubsection{Comparison with the Rusanov flux}
Consider the 2D rectangular Poisson solid, with  $0 \le x \le 1$ km, $0 \le y \le 1$ km, and $\rho = 1000$ kg/m$^3$, $\lambda = 1 $ GPa,  $\mu = 1 $ GPa, where $\lambda, \mu$ are the first and second Lam\'e parameters.  At the top boundary $y = 0$ we set a free-surface boundary condition, while at all other boundaries we set the incoming characteristic to zero. The setup models a 2D half-space problem with the free-surface boundary condition at the surface $ y=0$, $\sigma_{xy}(x, 0, t) = 0$, $\sigma_{yy}(x, 0, t) = 0$. We initialize the particle velocity with a Gaussian perturbation 
\begin{align*}
v_x(x,y,0) = v_y(x,y,0) = e^{-5\frac{\left((x-0.5)^2 + (y-0.5)^2\right)}{0.01}},
\end{align*}
centered at $x =y = 0.5 $ km, and the stress fields are initially set to zero, $\sigma_{xx}(x, y, 0) = 0$, $\sigma_{yy}(x, y, 0) = 0$,  $\sigma_{xy}(x, y, 0) = 0$.

We discretize the medium with $81\times 81$ elements in both directions with a polynomial approximation of degree $N = 4$. We use GL quadrature nodes and advance the solutions until $ t= 10$ s. The snapshots of the solutions at $t = 0.5, 1.0, 2.77$ s are shown in Figure \ref{fig:Rusanov} b). In order to make a comparison we have run the simulations for the same setup  and material parameters using the Rusanov flux, see Figure \ref{fig:Rusanov} a). Note that initially (at $ t = 0.5, 1.0$ s) the solutions are visually comparable for both the Rusanov flux and our physically motivated flux. However, as time passes the numerical solution for the Rusanov flux generates instabilities from the boundaries.  At $ = 2.77$ s, these instabilities have eventually  corrupted the solution everywhere in the simulation domain.

We have also performed numerous numerical experiments by varying the velocity ratio $\gamma = c_p/c_s$, with $c_p = \sqrt{\left(2\mu + \lambda\right)/\rho}$, $c_s = \sqrt{\mu/\rho}$. The numerical instability for the Rusanov flux appears to be more severe when $\gamma \gg 2$ (i.e. the solutions blow up much earlier).  For the physically motivated numerical flux the solution is stable for all velocity ratios  $\gamma$. This is consistent with the theory, since the numerical method is provably stable.

\begin{figure}[h!]
    \centering
     \includegraphics[width=0.99\textwidth]{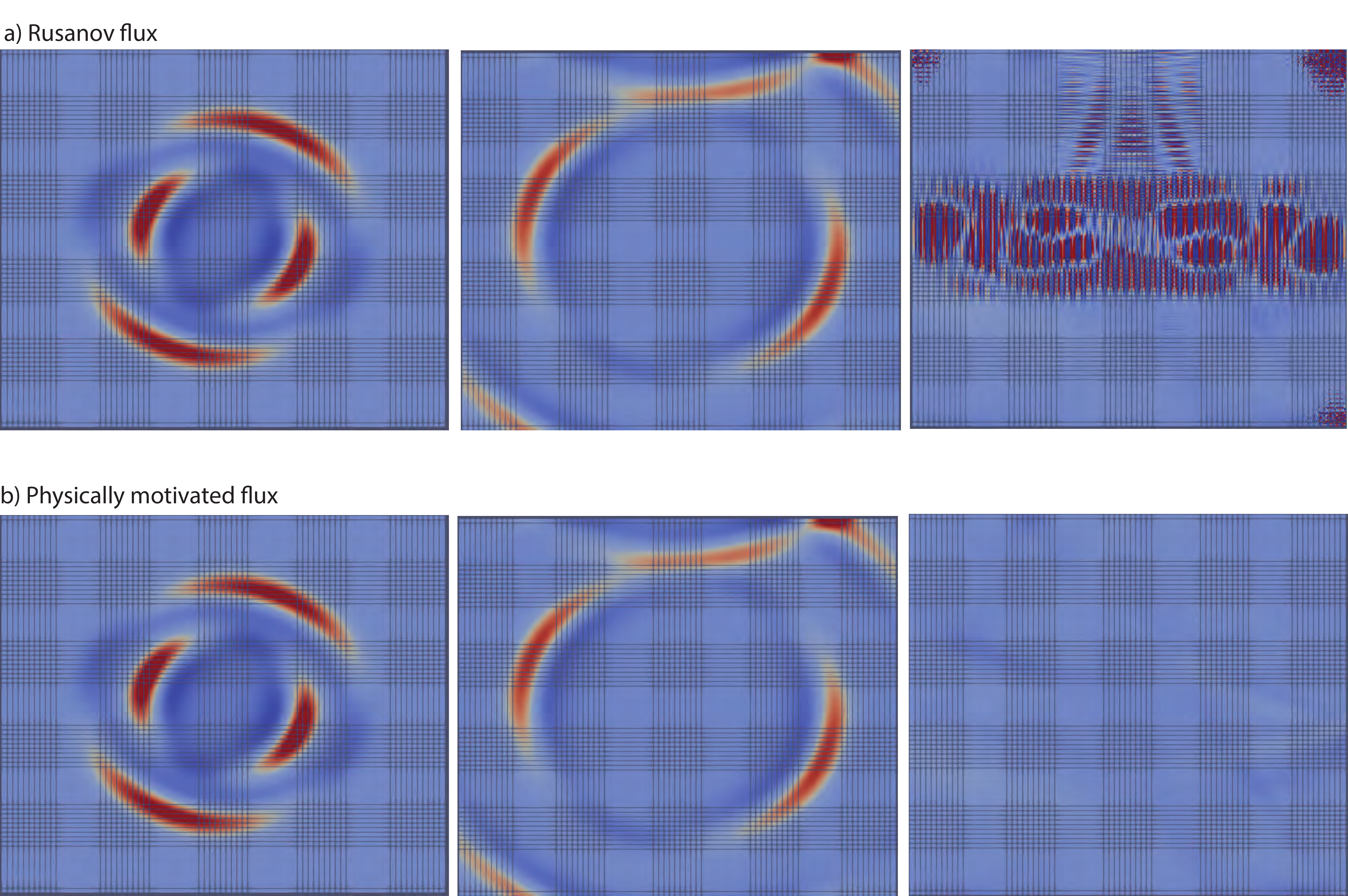}
    \caption{ A 2D example. a) The Rusanov flux showing numerical instabilities from boundaries. b) The physically motivated  flux showing stable solutions. The snapshots are at $t = 0.5, 1.0, 2.77$ s . All simulation and physical parameters are identical.}
    \label{fig:Rusanov}
\end{figure}

\subsubsection{Accuracy of Rayleigh surface waves}
Surface waves are propagating waves whose amplitudes are largest on the boundaries but decay exponentially into the domain. Here we demonstrate the effectiveness of the method  for computing surface waves in an elastic medium.  The 2D   elastic wave equation  in a half-plane $-\infty < x  <\infty $, $0 \le y < \infty $, with the free-surface boundary condition at $y = 0$, $\sigma_{xy}(x, 0, t) = 0$, $\sigma_{yy}(x, 0, t) = 0$, can  support surface waves. We  consider specifically  Rayleigh surface waves, see \cite{Rayleigh1885, DuruKreissandMattsson2015, KreissPeterssonYstrom2004}. For a constant coefficients $x$-periodic problem with the free-surface boundary condition at $y = 0$, the displacement field satisfies the Rayleigh wave solution
\begin{equation}\label{eq:surface}
\begin{split}
&\begin{pmatrix}
u_x(x,y,t)\\
u_y(x,y,t)\\
\end{pmatrix}
 = e^{-\omega  \sqrt{1-\tilde{\xi}^2} y}
\begin{pmatrix}
\cos{\left(\omega\left(x + c_r t\right)\right)}\\
\sqrt{1-\tilde{\xi}^2}\sin{\left(\omega\left(x + c_r t\right)\right)}\\
\end{pmatrix}
\\
&+
\left(\frac{\tilde{\xi}^2}{2}-1\right)e^{-\omega  \sqrt{1-\tilde{\xi}^2\mu/\left(2\mu + \lambda\right)}y}
\begin{pmatrix}
\cos{\left(\omega\left(x + c_r t\right)\right)}\\
\sin{\left(\omega\left(x + c_r t\right)\right)}/\sqrt{1-\tilde{\xi}^2\mu/\left(2\mu + \lambda\right)}\\
\end{pmatrix}.
\end{split}
\end{equation}
Here $\omega > 0,$  $c_r = \tilde{\xi}\sqrt{\mu}$ is the Rayleigh phase velocity, and $\tilde{\xi}$ satisfies the Rayleigh dispersion relation
\begin{equation}
\begin{split}
\sqrt{1-\tilde{\xi}^2}\sqrt{1-\frac{\tilde{\xi}^2\mu}{\left(2\mu + \lambda\right)}} - \left(\frac{\tilde{\xi}^2}{2}-1\right)^2 = 0.
\end{split}
\end{equation}
Note that  for all $\mu > 0$ and  $\lambda \ge 0$ we  must have $0.763 < \tilde{\xi}^2< 0.913$. Thus, the Rayleigh surface wave propagates in the $x$-direction and decays exponentially in the $y$-direction. The velocity field can be extracted from \eqref{eq:surface}, by taking the time derivative of the displacement field, giving
\begin{equation}
\begin{split}
{v}_x(x,y,t)  = \frac{\partial {u}_x(x,y,t) }{\partial t}, \quad {v}_y(x,y,t)  = \frac{\partial {u}_y(x,y,t) }{\partial t}.
\end{split}
\end{equation}
The stress field can be obtain from \eqref{eq:surface}, by combining the spatial gradients of the displacement field with the stiffness tensor of elastic material,  as prescribed by Hooke's law. We have
\begin{equation}
\begin{split}
\sigma_{xx}(x,y,t)  &= (2\mu + \lambda)\frac{\partial {u}_x(x,y,t) }{\partial x} + \lambda \frac{\partial {u}_y(x,y,t) }{\partial y}, 
\quad 
\sigma_{yy}(x,y,t)  = \lambda\frac{\partial {u}_x(x,y,t) }{\partial x} + (2\mu + \lambda) \frac{\partial {u}_y(x,y,t) }{\partial y},\\
\quad
\sigma_{xy}(x,y,t)  &= \mu\left(\frac{\partial {u}_x(x,y,t) }{\partial y} +  \frac{\partial {u}_y(x,y,t) }{\partial x}\right).
\end{split}
\end{equation}

We consider the $x$-periodic rectangular domain, $0\le x \le 1$ km, $0 \le y \le 10$ km, with $\omega = 2 \pi$. Note  that in the $x$-direction the solution is $1$-periodic,  at $y = 0$ we have the free-surface boundary condition and  at $ y = 10$ km we prescribe a Dirichlet condition for the velocity field. 

We use  $N = 4$ degree polynomial approximation on GL and GLL nodes separately, and evaluate numerical accuracy, on a sequence of uniformly refined meshes. We consider the relative $L_2$-norm error for the particle velocity vector and the stress vector, separately.  First we consider the Poisson solid with $\rho = 1000$ kg/m$^3$, $\lambda = 1000 $ MPa,  $\mu = 1000 $ MPa, with $\lambda/\mu =1$.  The final time is $t = 1$ s. Numerical errors are at the final time $t = 1$ s are shown in Tables \ref{tab:RayleighSurfaceWaves_Velocity_1} and \ref{tab:RayleighSurfaceWaves_Stress_1} for the particle velocity and the stress field respectively. In the asymptotic regime the errors converge optimally (at the rate $N+1$).

\begin{table}[h!]
\centering
\begin{tabular}{c |  c | c | c | c  }
 $\Delta{x}$ & error(GLL)&  rate(GLL) & error(GL)&  rate(GL)  \\
\hline
1&3.1560e-01 & -- & {1.0210e-01} & {--} \\
0.5&  2.3600e-02 & 3.7395 & {3.7000e-03} & {4.7803}  \\
0.25&6.8522e-04 & 5.1077 & {9.8056e-05 } & {5.2436}  \\
0.125&2.2145e-05 & 4.9515 & {3.2188e-06} & {4.9290}  \\
0.0625&  6.9068e-07 &  5.0029 & {1.0016e-07} & {5.0062} \\
\hline
\end{tabular}
\caption{Relative  numerical errors of the particle velocity and convergence rate at $t = 1.0$ s with $\lambda/\mu = 1$.}
\label{tab:RayleighSurfaceWaves_Velocity_1}
\end{table}

\begin{table}[h!]
\centering
\begin{tabular}{c |  c | c | c | c  }
 $\Delta{x}$ & error(GLL)&  rate(GLL) & error(GL)&  rate(GL)  \\
\hline
1&2.7980e-01 & -- & {1.1370e-01} & {--} \\
0.5&1.8400e-02 & 3.9253 & {4.0000e-03} & {4.8344}  \\
0.25&8.8761e-03 & 4.3750 & {1.4461e-04 } & {4.7849}  \\
0.125&2.7906e-05 & 4.9913 & {4.4875e-06} & {5.0101}  \\
0.0625& 8.2938e-07 &  5.0724 &  {1.3488e-07} &  {5.0561} \\
\hline
\end{tabular}
\caption{Relative numerical errors of the stress field and convergence rate at $t = 1.0$ s with $\lambda/\mu = 1$. }
\label{tab:RayleighSurfaceWaves_Stress_1}
\end{table}

The analysis in \cite{DuruKreissandMattsson2015, KreissPeterssonYstrom2004}, shows that surface waves are very sensitive to numerical errors in almost incompressible elastic materials, that is when $\lambda/\mu \gg 1$. Higher order accurate  numerical schemes become essential for accurate and efficient numerical simulations. To investigate this, we consider $\lambda/\mu = 100$, where $\rho = 1000$ kg/m$^3$, $\lambda = 100000 $ MPa,  $\mu = 1000 $ MPa. Numerical errors at the final time $t = 1$ s are shown in Tables \ref{tab:RayleighSurfaceWaves_Velocity} and \ref{tab:RayleighSurfaceWaves_Stress} for the particle velocity and the stress field respectively. Note that for the particle velocity, the amplitude of the relative errors seems unaffected by the velocity ratio. For the stress field, the increase of the the velocity ratio from $\lambda/\mu = 1$ to $\lambda/\mu = 100$ leads to the increase of the relative error by a factor of 4. However, for both cases $\lambda/\mu = 1$ and $\lambda/\mu = 100$, the relative error converges optimally to zero in the asymptotic regime.
\begin{table}[h!]
\centering
\begin{tabular}{c |  c | c | c | c  }
%
 $\Delta{x}$ & error(GLL)&  rate(GLL) & error(GL)&  rate(GL)  \\
\hline
1&3.4170e-01 & -- & {1.0800e-01} & {--} \\
0.5&  2.1400e-02 & 3.9994 & {3.4000e-03} & {4.9698}  \\
0.25&6.9680e-04 & 4.9383 & {1.0752e-04 } & {5.0024}  \\
0.125&2.3167e-05 & 4.9104 & {3.4860e-06} & {4.9469}  \\
0.0625&7.2740e-07 & 4.9934 & {1.1024e-07} & {4.9828} \\
\hline
\end{tabular}
\caption{Relative numerical errors of the particle velocity and convergence rate at $t = 1.0$ s with $\lambda/\mu = 100$.}
\label{tab:RayleighSurfaceWaves_Velocity}
\end{table}
\begin{table}[h!]
\centering
\begin{tabular}{c |  c | c | c | c  }
%
 $\Delta{x}$ & error(GLL)&  rate(GLL) & error(GL)&  rate(GL)  \\
\hline
1&4.0050e-01 & -- & {1.7220e-01} & {--} \\
0.5&3.7900e-02 & 3.4034 & {1.0600e-02} & {4.0188}  \\
0.25&2.2000e-03 & 4.0728 & {5.2501e-04 } & {4.3387}  \\
0.125&9.2341e-05 & 4.6063 & {1.8313e-05} & {4.8414}  \\
0.0625&3.0247e-06 & 4.9321& {5.5659e-07} & {5.0401} \\
\hline
\end{tabular}
\caption{Relative numerical errors of the stress field and convergence rate at $t = 1.0$ s with $\lambda/\mu = 100$. }
\label{tab:RayleighSurfaceWaves_Stress}
\end{table}


\subsubsection{Non-planar topography}
Here, we demonstrate the potential of our method in modeling geometrically complex free surface topography.
Consider the 2D  isotropic elastic medium, with  $-10 \le x \le 10$ km, $0 \le y \le \widetilde{y}(x)$ km, and $ \widetilde{y}(x) = 10 + 0.1x + \sin\left(4\pi x/20 + 3.34\right) \cos\left(2\pi\left(x/20 -0.5\right) + 3.34\right)$. We use transfinite interpolation to propagate points on the boundaries into the domain,  resulting in a curvilinear mesh obeying the topography. To enable efficient numerical treatment, we map the mesh and the PDE to a regular Cartesian mesh. We discretize the transformed domain into a tensor-product of dG elements, and further discretize each element using GLL nodes.  Note that in the physical  space the elements are curved. See  Figure \ref{fig:mesh} for a graphical representation of the computational mesh. We consider a homogeneous  crustal rock material properties,with $\rho = 2700$ kg/m$^3$, $c_p = 6000 $ m/s, and $c_s = 3343 $ m/s, where $c_p$ is the p-wave speed and $c_s$ is the shear wave speed.  In the transformed Cartesian domain, however, the medium is heterogeneous and anisotropic. At the top boundary $ y = \widetilde{y}(x)$ we set a free-surface boundary condition, while at all other boundaries we set the incoming characteristic to zero. All boundary and inter-element conditions are implemented weakly, as discussed in the previous sections, by constructing appropriate data and penalizing the data against the incoming characterisitics, using physically motivated penalties. We initialize the normal stress ($\sigma_{xx}$, $\sigma_{yy}$) with a Gaussian perturbation centered at $x = 0$ km, $y = 6 $ km, while  the shear stress ($\sigma_{xy}$) and the particle velocity vector ($v_x$, $v_y$)  are initially set to zero. The initial condition  generates  pressure wave perturbation only. Snapshots of the absolute divergence: $\left|\frac{\partial v_x}{\partial x} + \frac{\partial v_y}{\partial y}\right |$, and the absolute curl: $\left|\frac{\partial v_y}{\partial x} - \frac{\partial v_x}{\partial y}\right |$, of the particle velocity vector are plotted in Figure \ref{fig:topography}, showing the evolution of the wave field and the interaction of waves with the non-planar topography. Note that initially, for $ t \le 2.3$ s, the absence of shear wave perturbation in the initial data implies that the curl of the velocity vector vanishes identically. However, as time progresses and the wave begin to interact with the free-surface topography, shear waves are generated due to mode conversions. This is evident in the curl of the velocity  field shown in Figure \ref{fig:topography} for $t \ge 0.62$ s. We have evolved the wave field for a sufficiently long time, $t \le 100$ s, without observing instabilities. Again, this is consistent with the expectations from theory, since the numerical method is provably stable.

\begin{figure}[h!]
\begin{subfigure}
    \centering
\stackunder[5pt]{\includegraphics[width=0.475\textwidth]{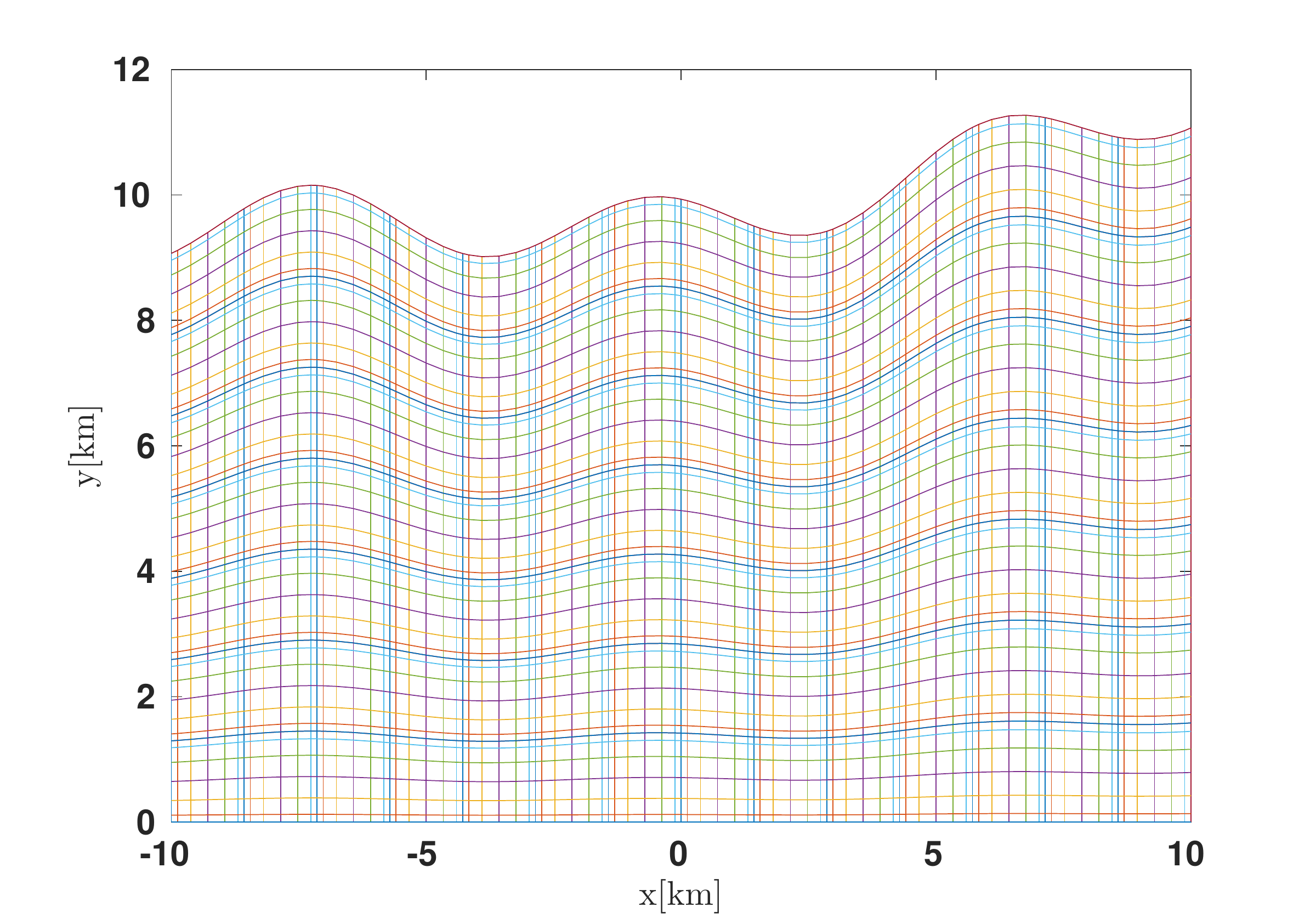}}{Boundary conforming curvilinear mesh.}%
\hspace{0.0cm}%
\stackunder[5pt]{\includegraphics[width=0.5\textwidth]{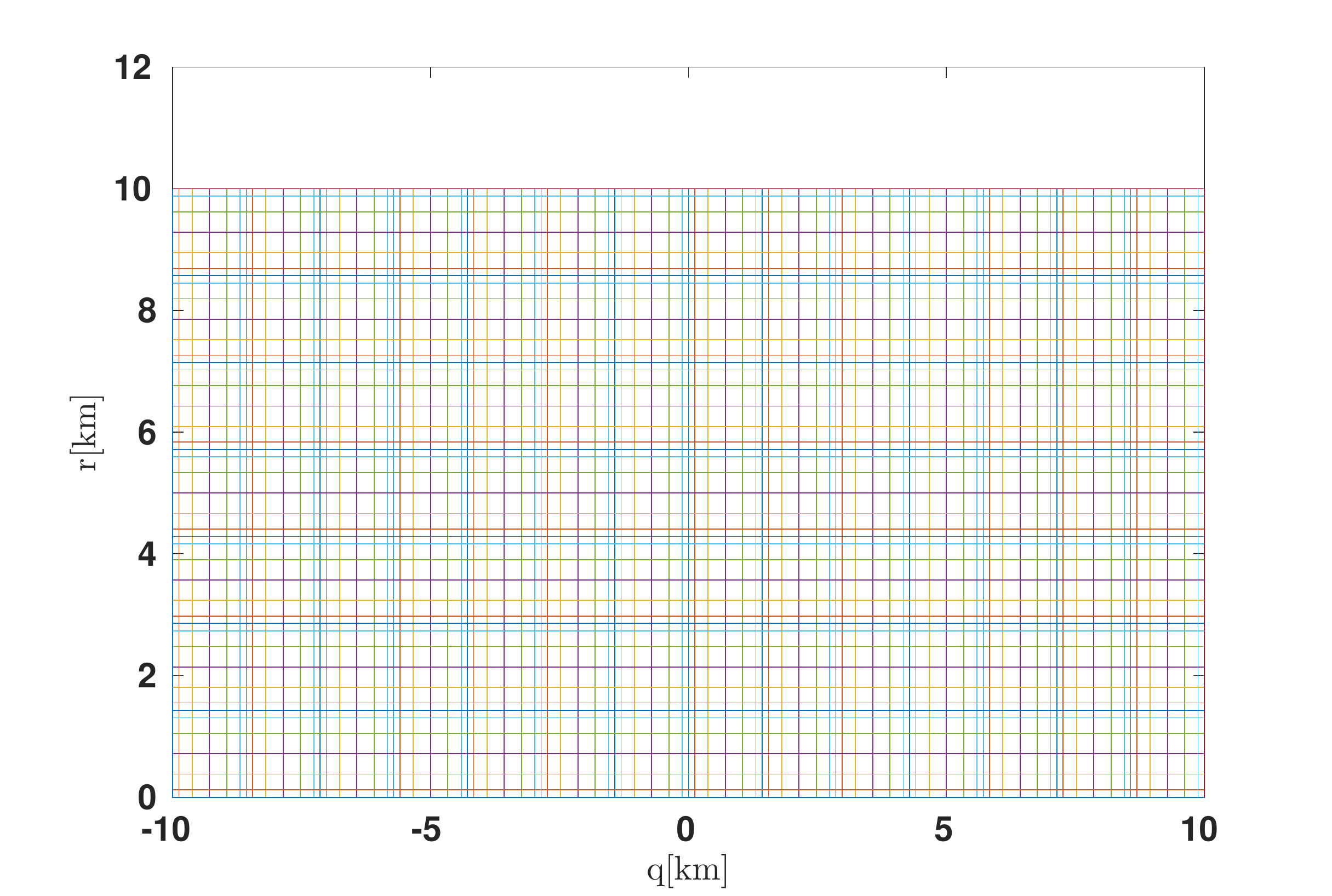}}{Transformed Cartesian mesh.}%
     \end{subfigure}
    \caption{ Computational mesh}
    \label{fig:mesh}
\end{figure}

\begin{figure}[h!]
    \centering
    \includegraphics[width=0.245\textwidth]{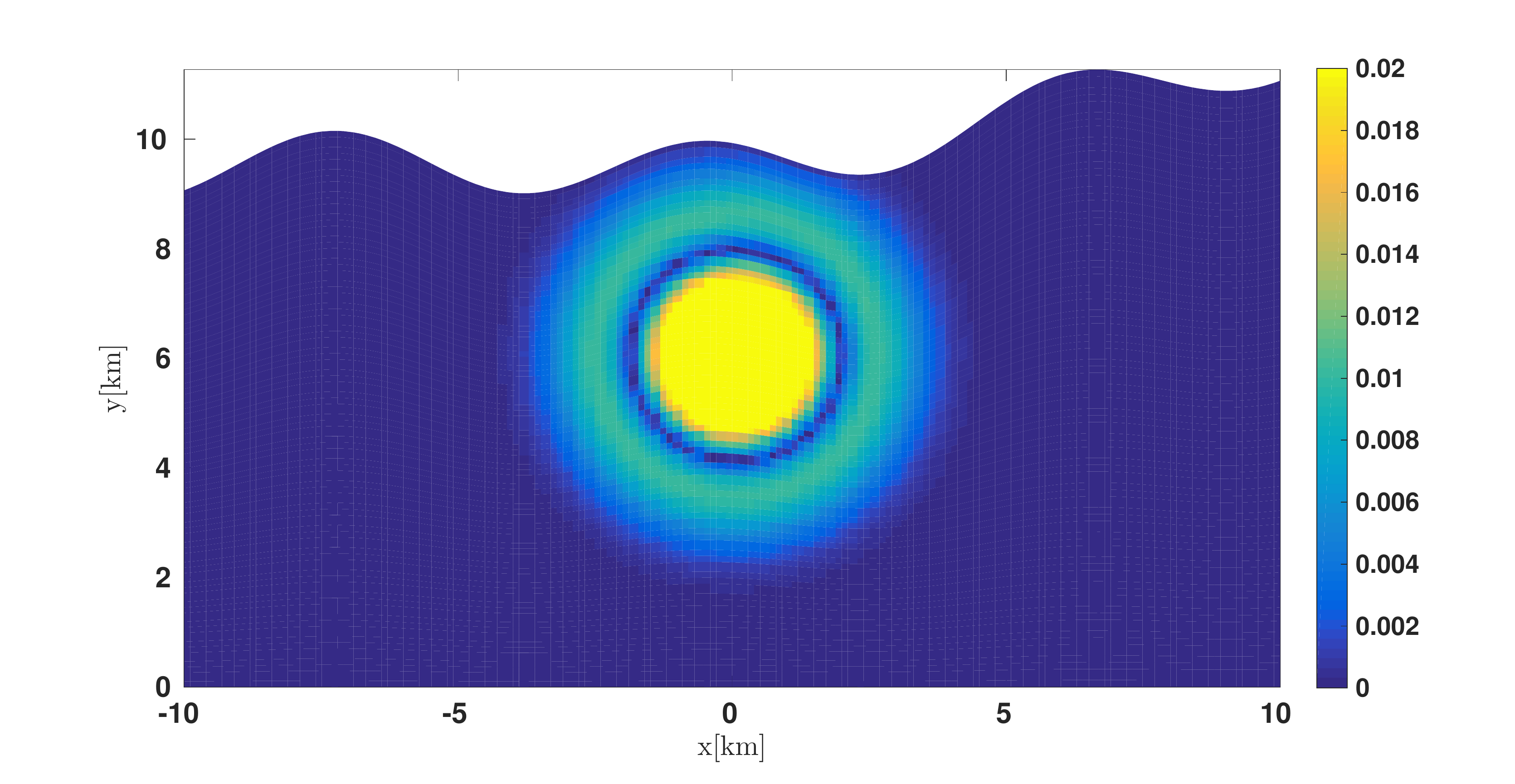}
     \includegraphics[width=0.245\textwidth]{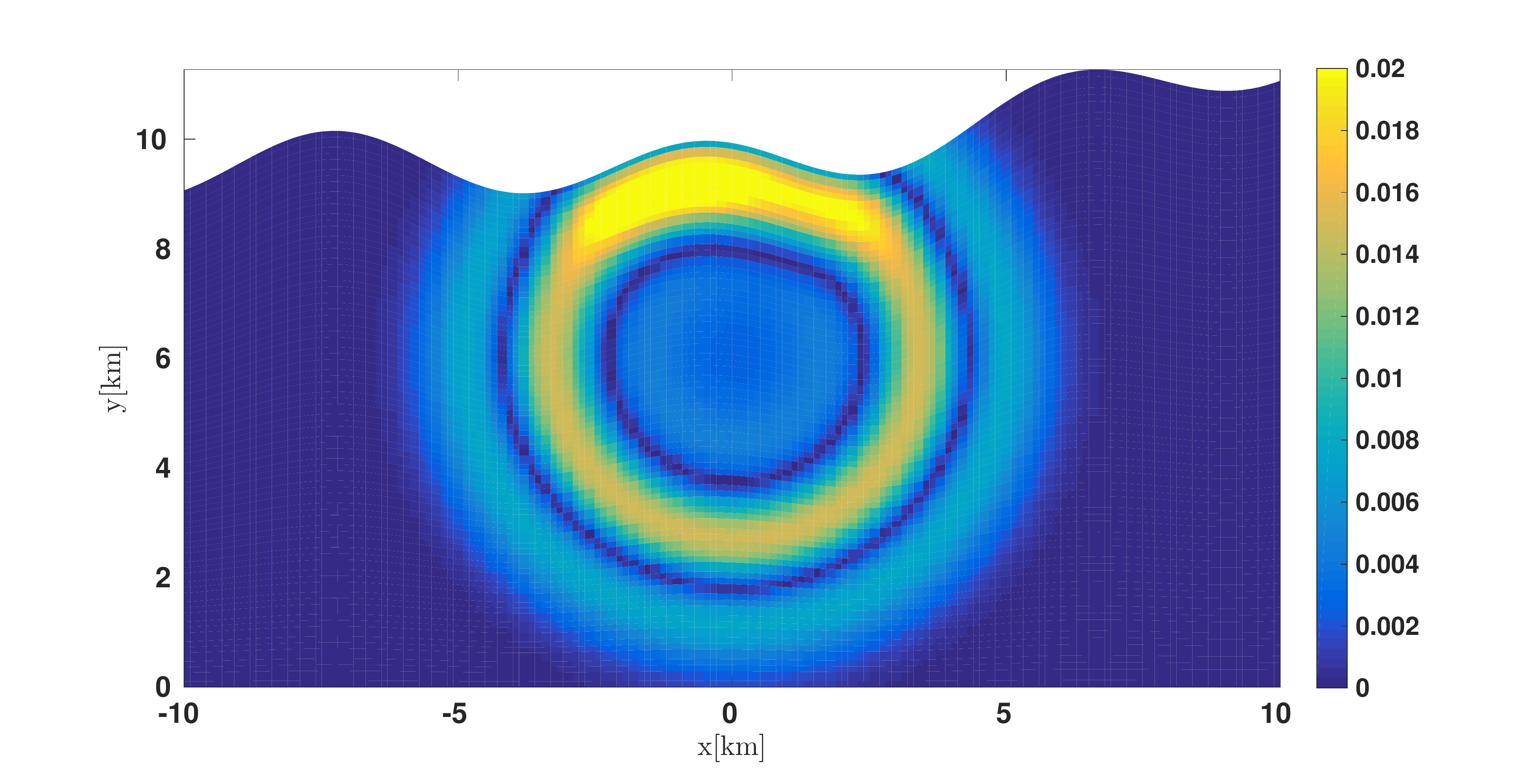}
     \includegraphics[width=0.245\textwidth]{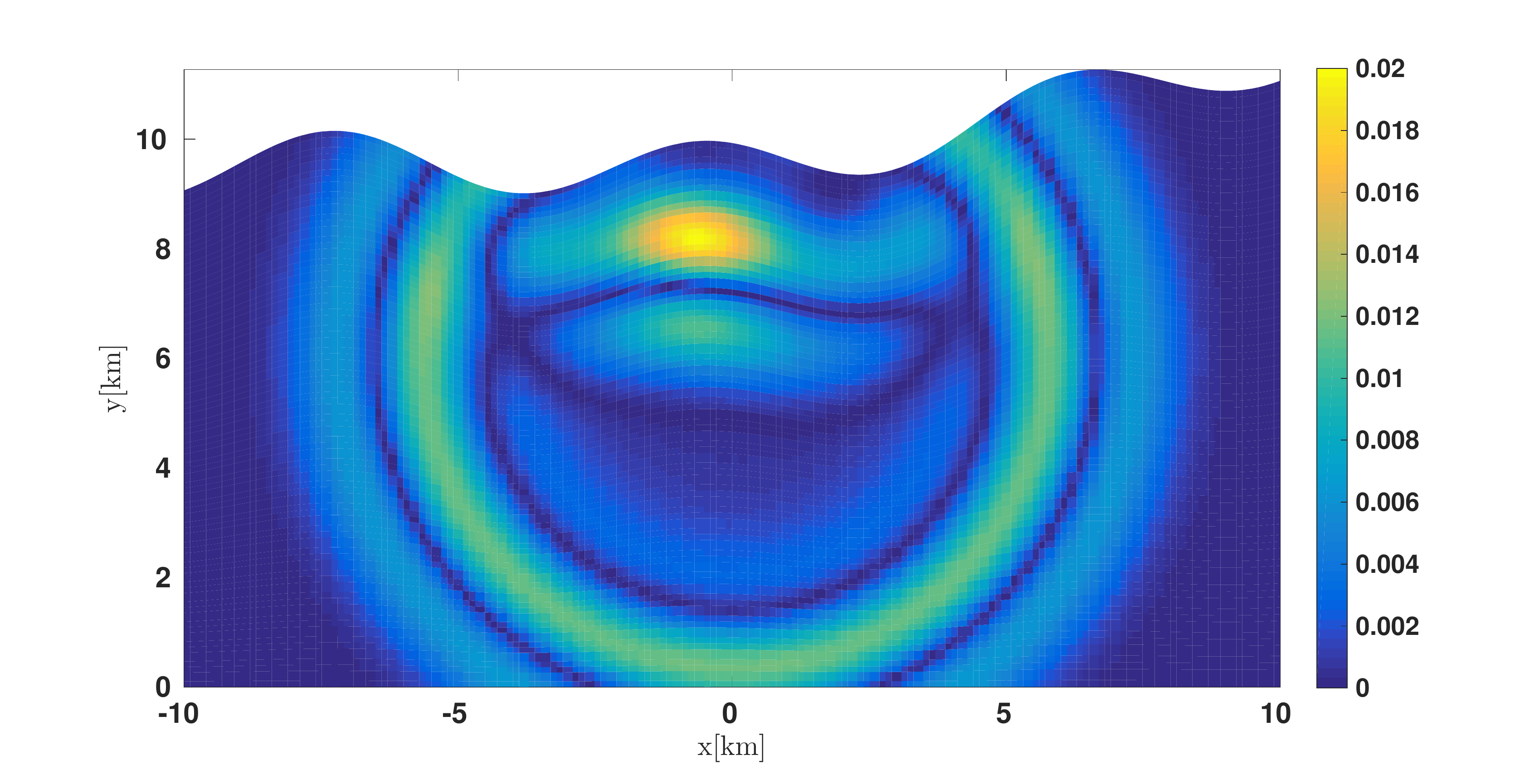}
     \includegraphics[width=0.245\textwidth]{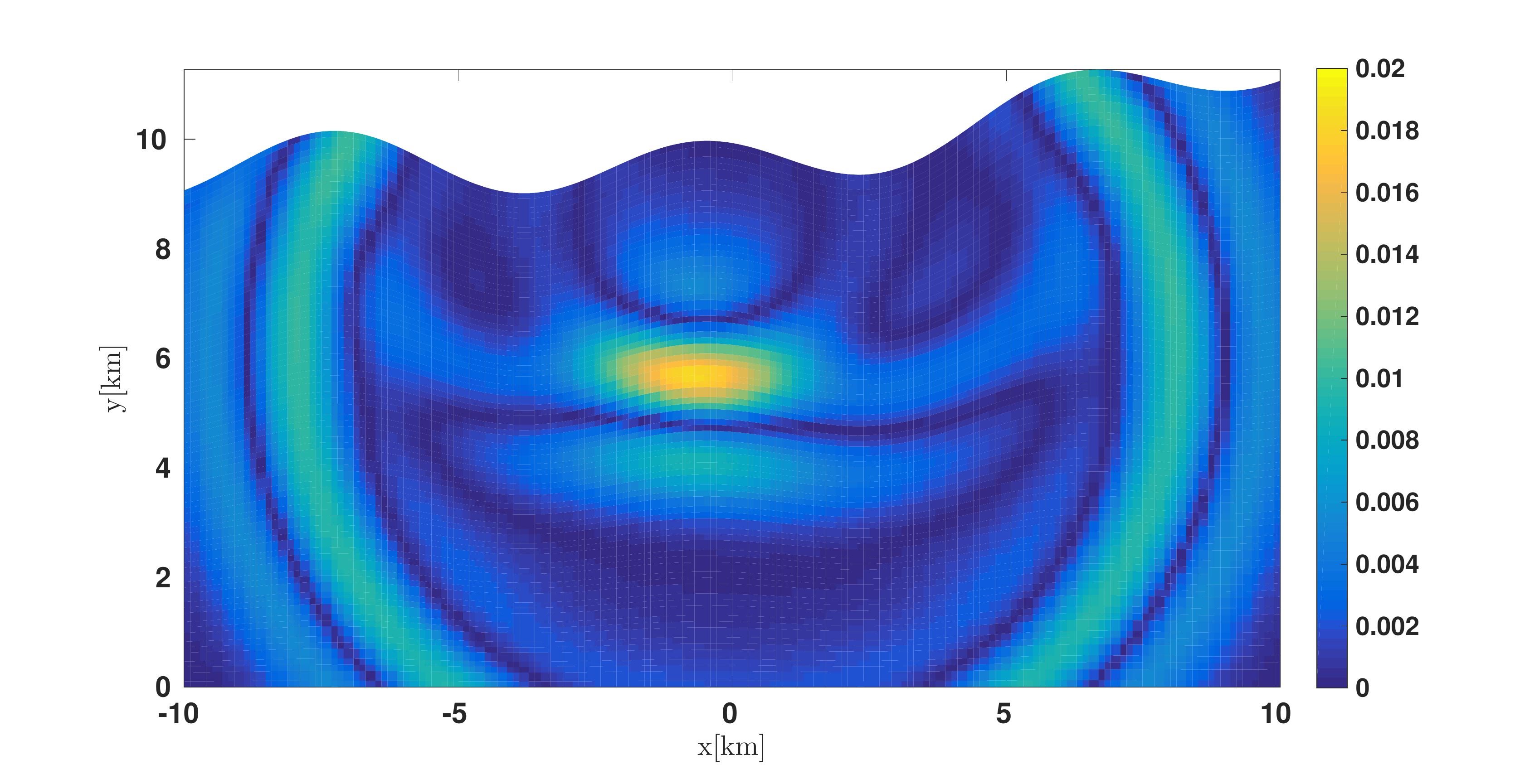}
      \includegraphics[width=0.245\textwidth]{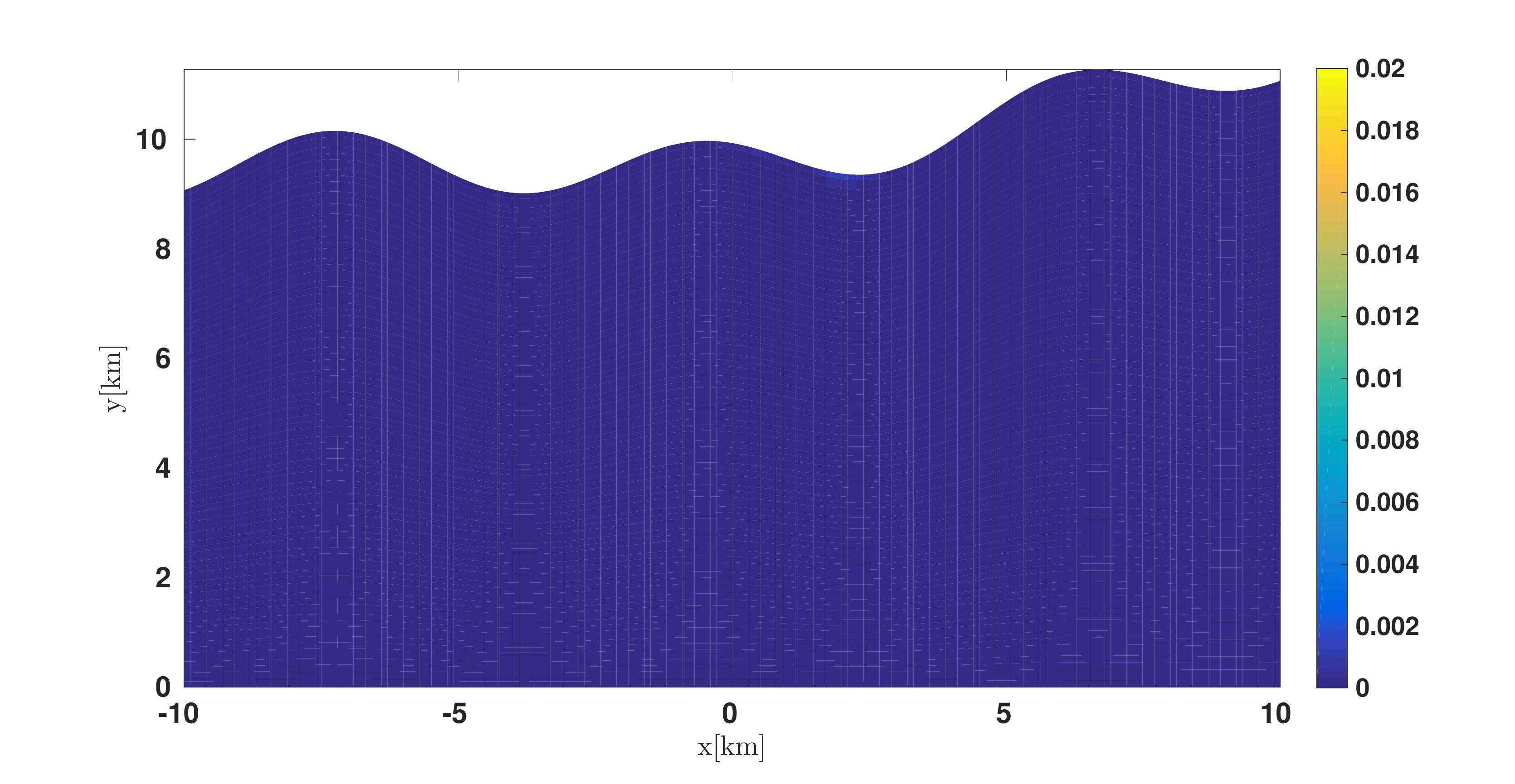}
     \includegraphics[width=0.245\textwidth]{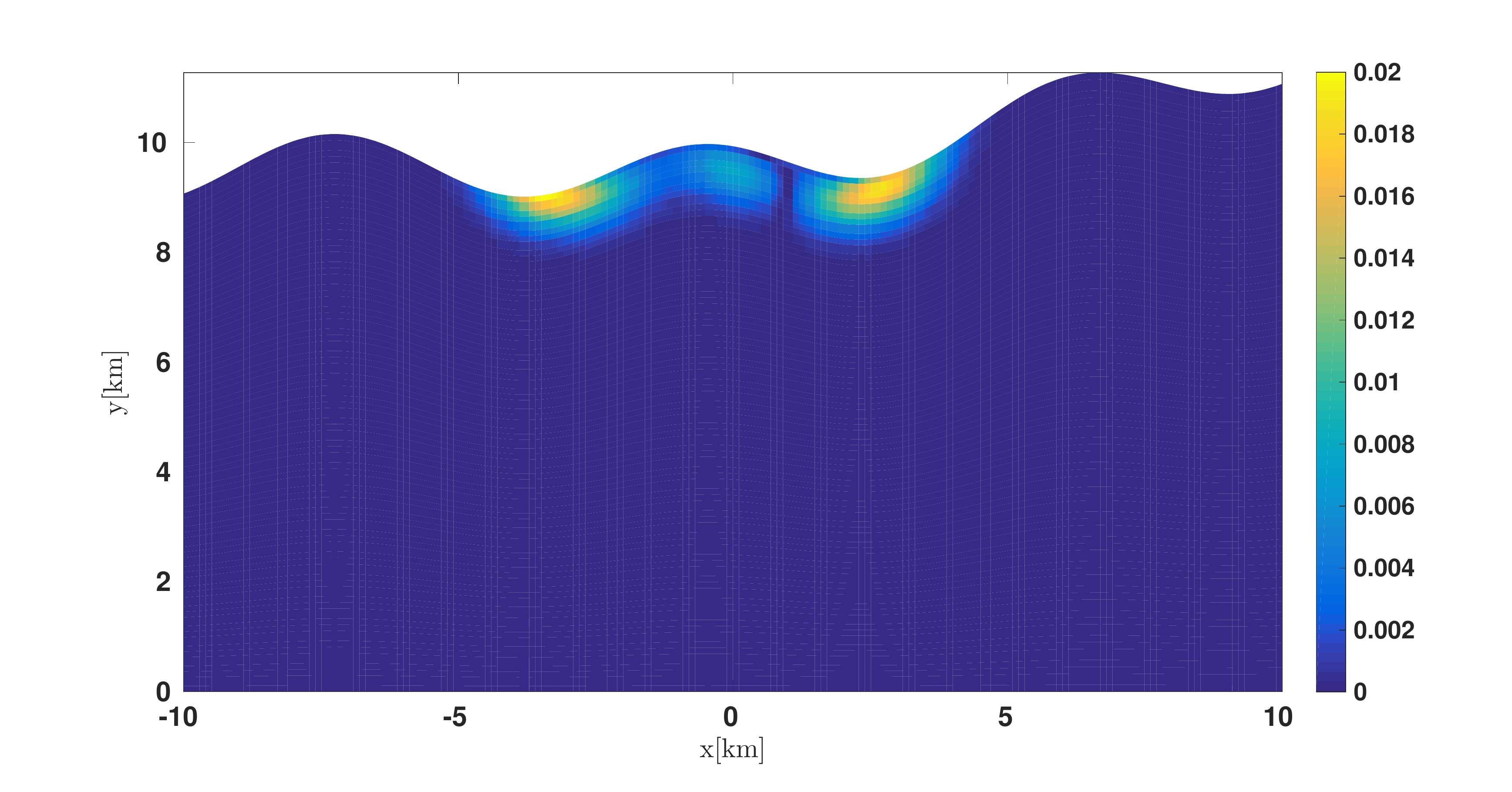}
     \includegraphics[width=0.245\textwidth]{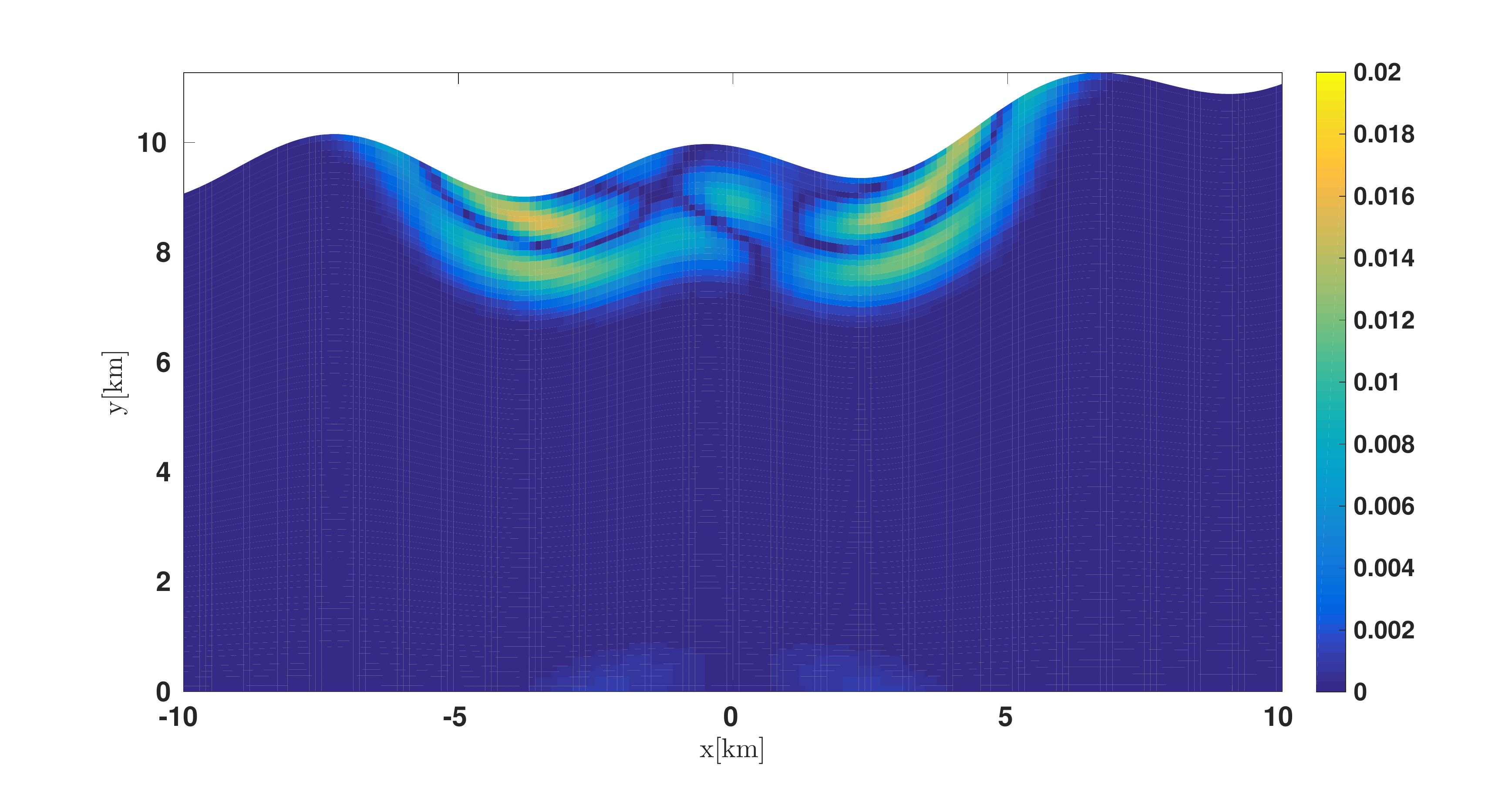}
     \includegraphics[width=0.245\textwidth]{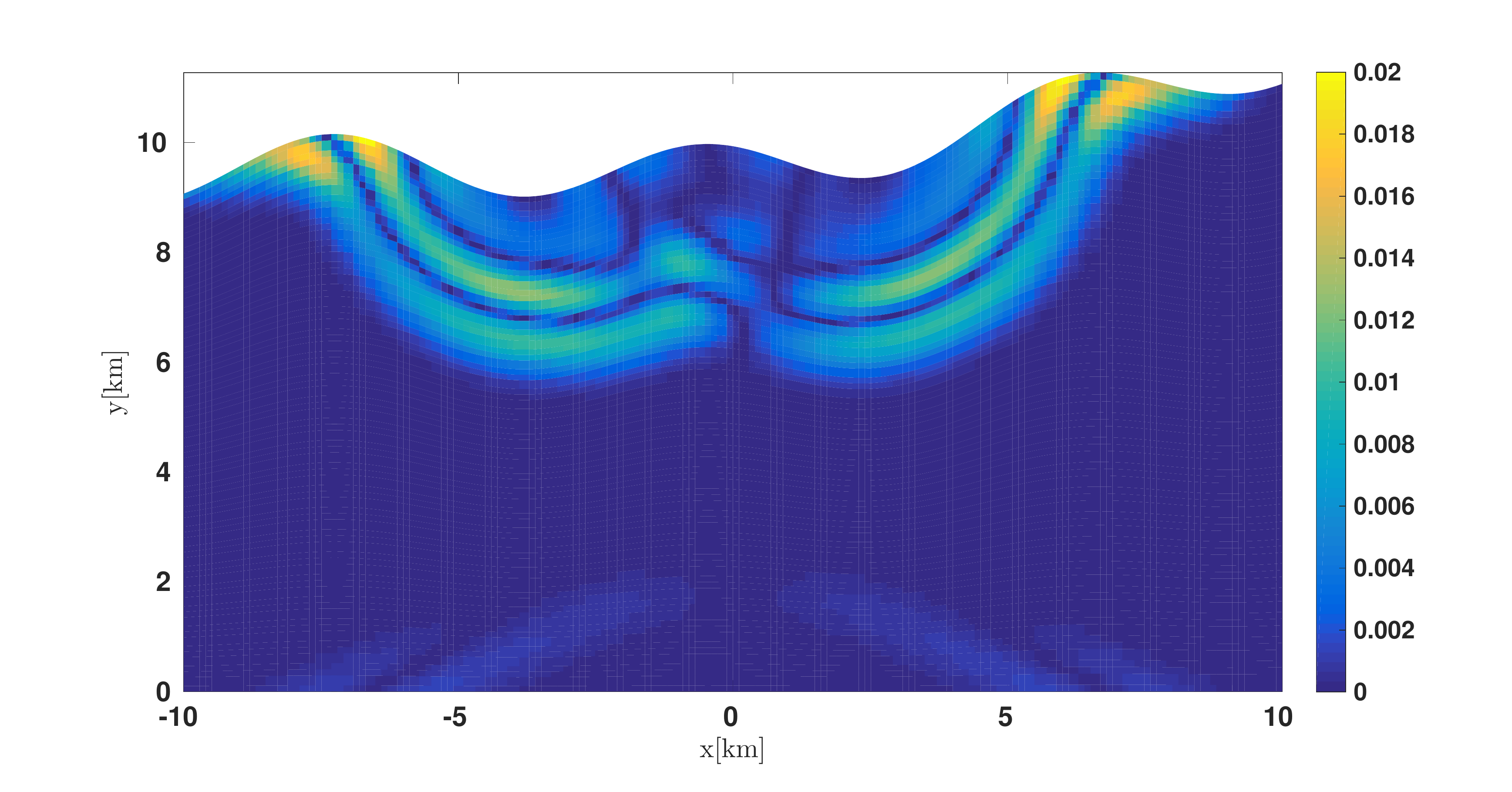}
    \caption{ Complex topography.  Snapshots of the wave field, from left to right, at $t = 0.23, 0.62, 1.01, 1.40$ $s$.  The top panel is the absolute divergence of the particle velocity vector and the lower panel  is the absolute curl of the particle velocity vector.}
    \label{fig:topography}
\end{figure}

Next, we will  perform numerical experiments to demonstrate the robustness and accuracy of the method.
\subsubsection{ Dynamic earthquake ruptures on a dynamically adaptive mesh}
We  will now  propagate  dynamic earthquake ruptures on a dynamically adaptive mesh. This numerical experiment is designed to demonstrate  the robustness of our method, and attempt an adaptive mesh refinement strategy. The domain span $(x, y) \in [0, 30~\text{km}] \times [0, 20~\text{km}]$.
Here, the fault is a vertical line subdividing the two isotropic elastic solids, at $x = 15~\text{km}$. The material properties of the  elastic solid are homogeneous
 $c_p=6000$ m/s, $c_s=3464$ m/s, $\rho=2670$ kg/m$^3$. The two elastic solids are held together by a slip-weakening friction law, \eqref{eq:slip-weakening}, 
with the friction parameters $f_s  = 0.677 $, $f_d  = 0.525 $, and $ d_c= 0.40$ m.
We consider initial uniform prestress distribution $ \sigma_{xy}^0 = 70$ MPa, $ \sigma_{yy}^0 = 0$ MPa, $ \sigma_{xx}^0= 120$ MPa.
At $t =0$ we discretize the domain uniformly with the element size $\Delta{x} = 30/21$ km, $\Delta{y} = 20/14$ km, and consider degree $ N = 5$ polynomial approximation on GL nodes.
The peak frictional strength on the fault is $\tau_p =  f_s \sigma_n = 81.24~\ \text{MPa}$.
We nucleate the fault at $y = 7.5$ km depth by over-stressing the  element containing $y = 7.5$ km, with $ \tau_0= 81.6$ MPa.

\begin{figure}[h!]
\begin{center}
	\includegraphics[width=0.3\columnwidth]{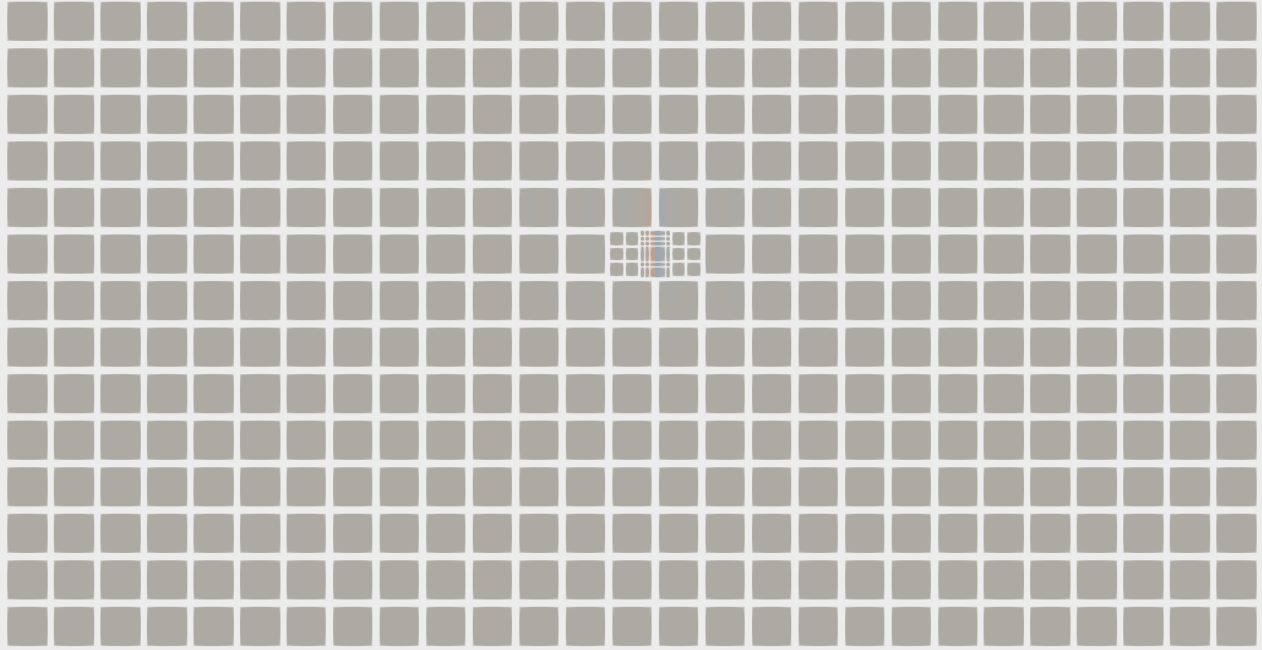} 
	\includegraphics[width=0.3\columnwidth]{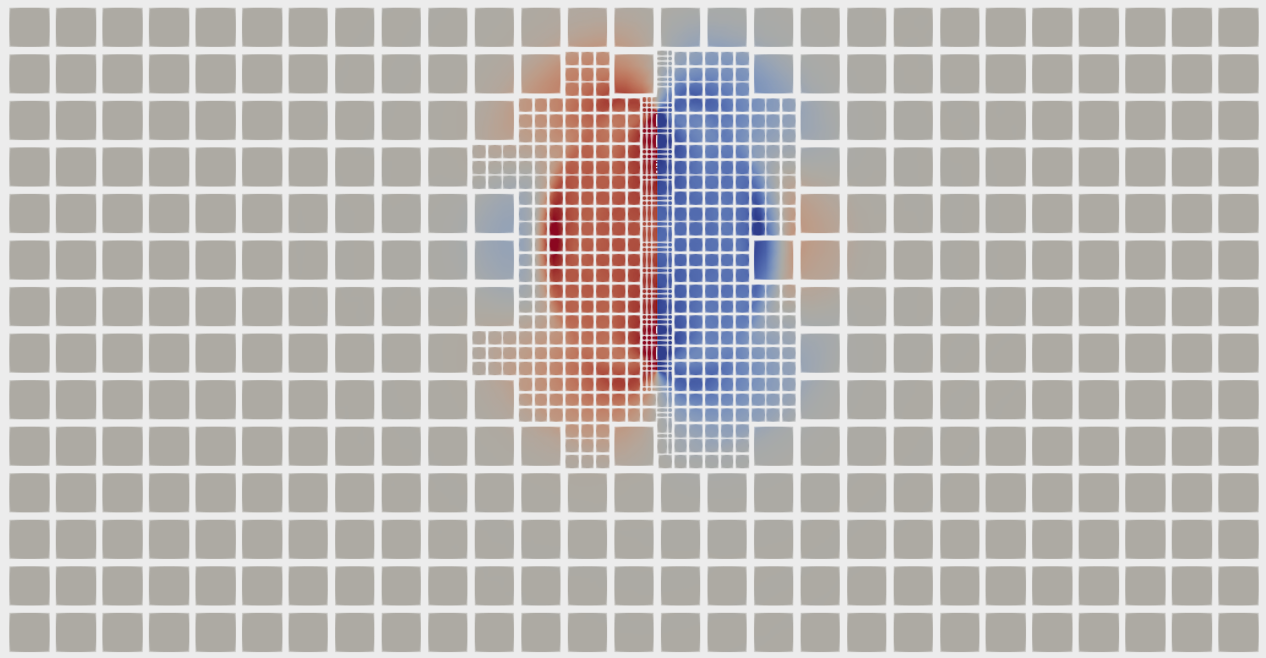} 
	\includegraphics[width=0.3\columnwidth]{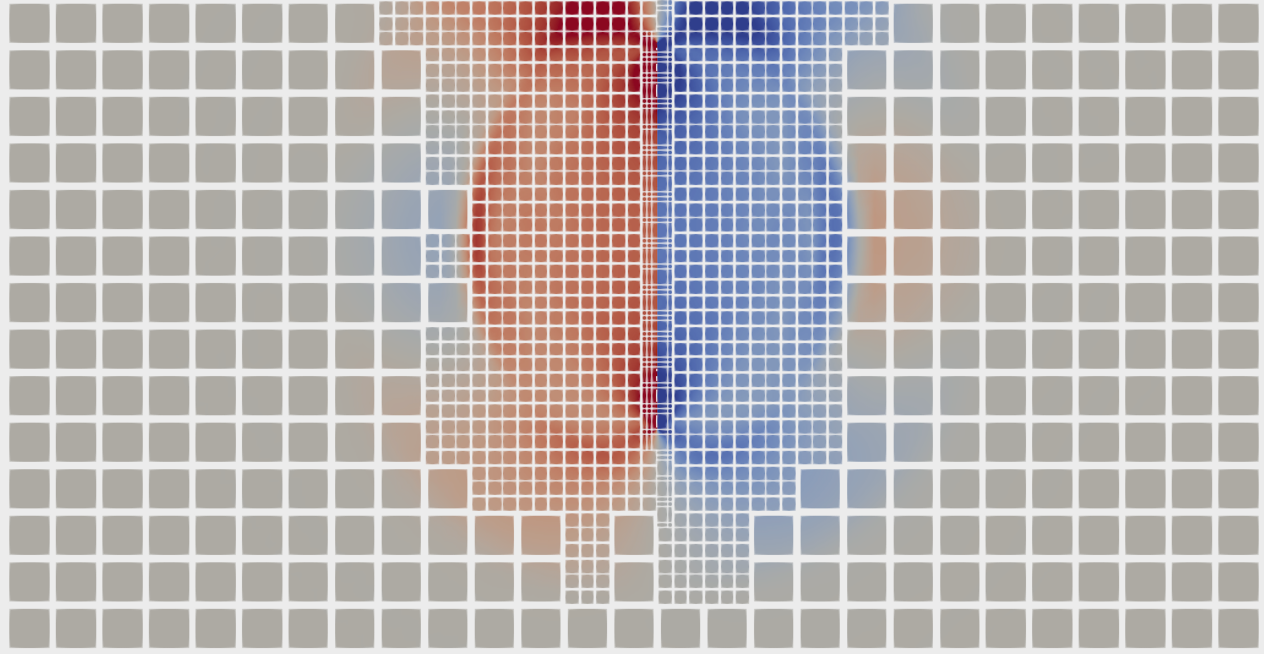} 
	\includegraphics[width=0.3\columnwidth]{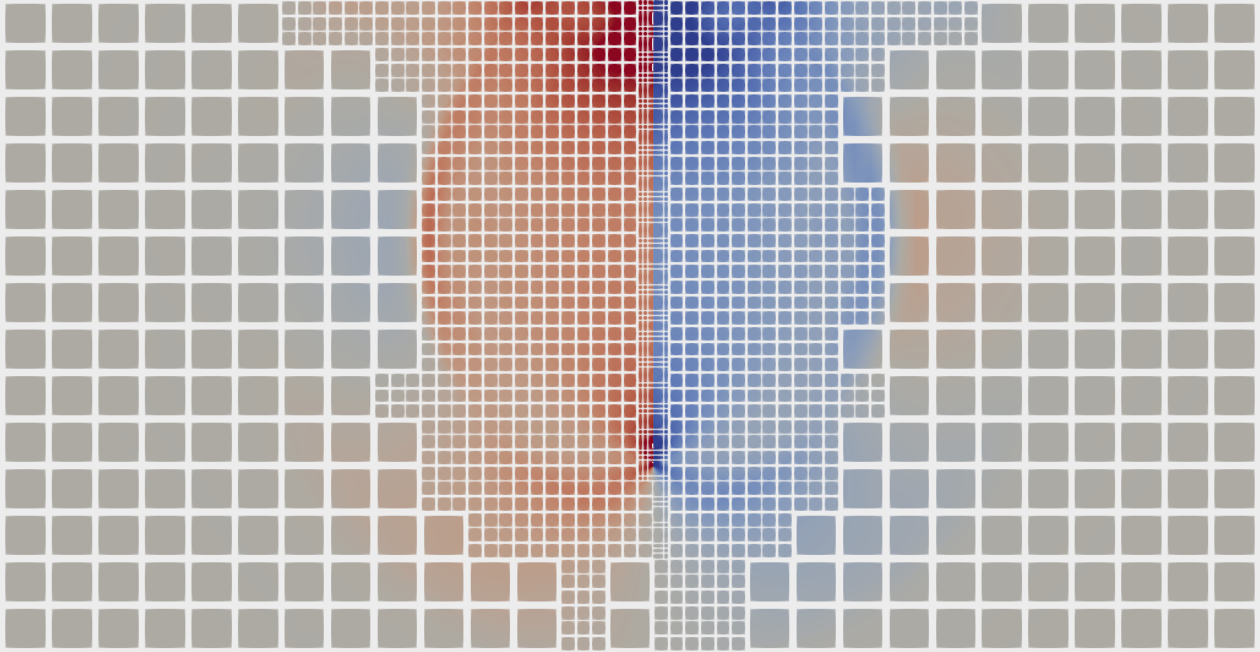} 
	\includegraphics[width=0.3\columnwidth]{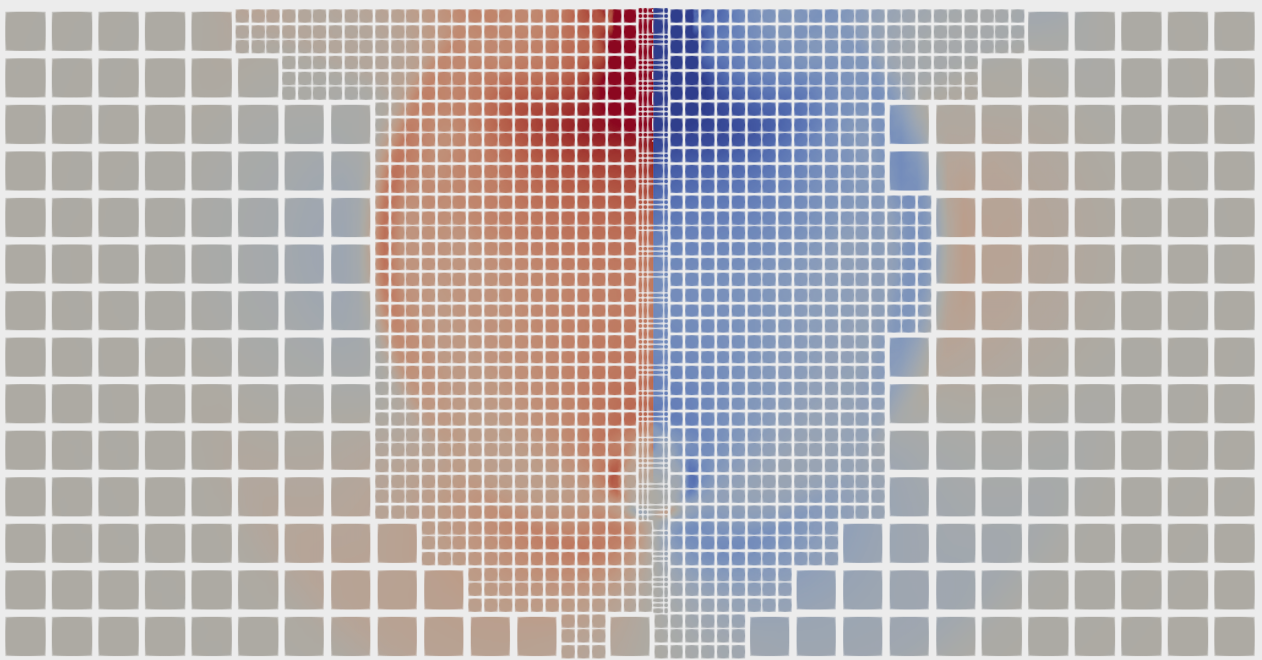} 
	\includegraphics[width=0.3\columnwidth]{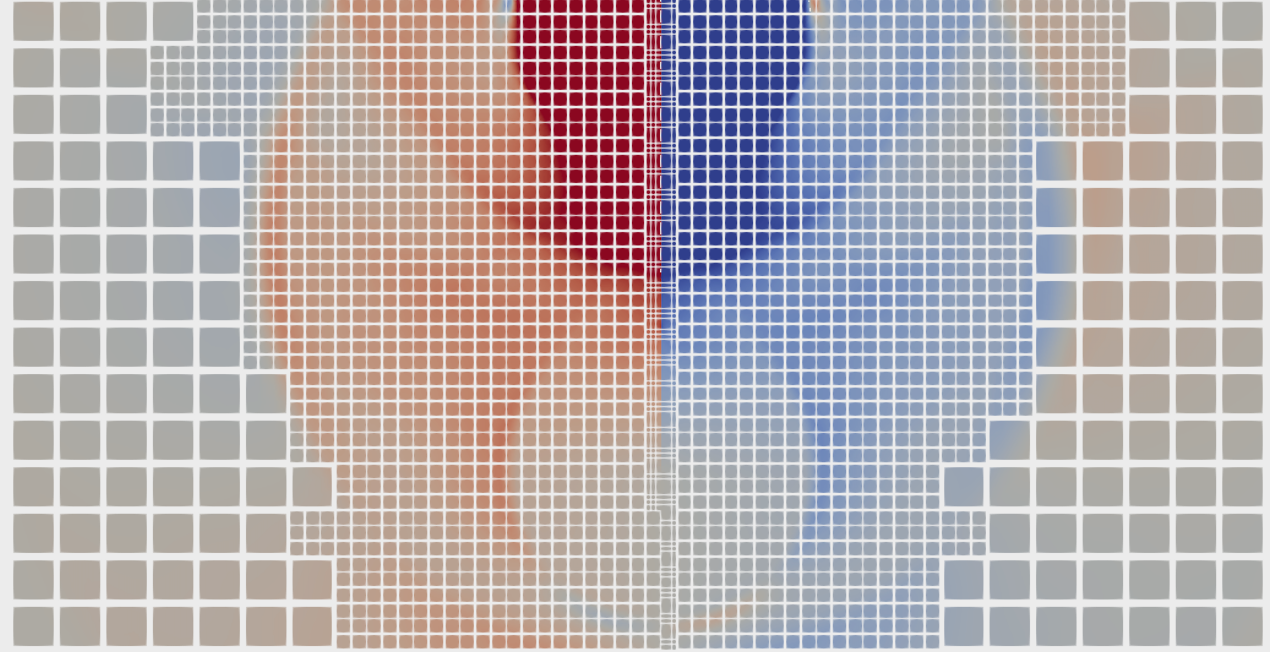} 
\end{center}
\caption{Earthquake ruptures on a dynamically adaptive mesh. The snapshots of the particle velocity $v_y$ are taken at $t = 0.1, 1, 2, 3, 4, 5$ s.}
\label{fig:adaptive_mesh_refinement}
 \end{figure}
For ruptures, refinement criteria is set by monitoring the slip rate $V$ on the fault, while the root means square of the particle velocity, $v = \sqrt{v_x^2 + v_y^2}$,  gives the mesh refinement indicator for the wave fields. That is, once the slip-rate at any point in an element exceeds the threshold  $V = 1$ cm/s, we activate  mesh refinement on the fault.
Similarly for the wave fields, if the root means square $v $  exceeds $50$ cm/s  the mesh is refined. Note that on the fault we have two levels of mesh refinement, see Figure \ref{fig:adaptive_mesh_refinement2}. That is the refined elements on the fault are 9 times smaller than the initial coarse mesh, while the refinement meshes for the wave fields are 3 times smaller than the original coarse mesh.

Snap shots of the particle velocity $v_x$ are shown in figure \ref{fig:adaptive_mesh_refinement}.
Note that initially,  the two  elements closest the hypocenter are refined.
After the nucleation, the rupture progresses along  the fault. The adaptive mesh refinement tracks the rupture front and the accompanying elastic waves.
\begin{figure}[h!]
\begin{center}
	\includegraphics[draft=false,width=0.5\columnwidth]{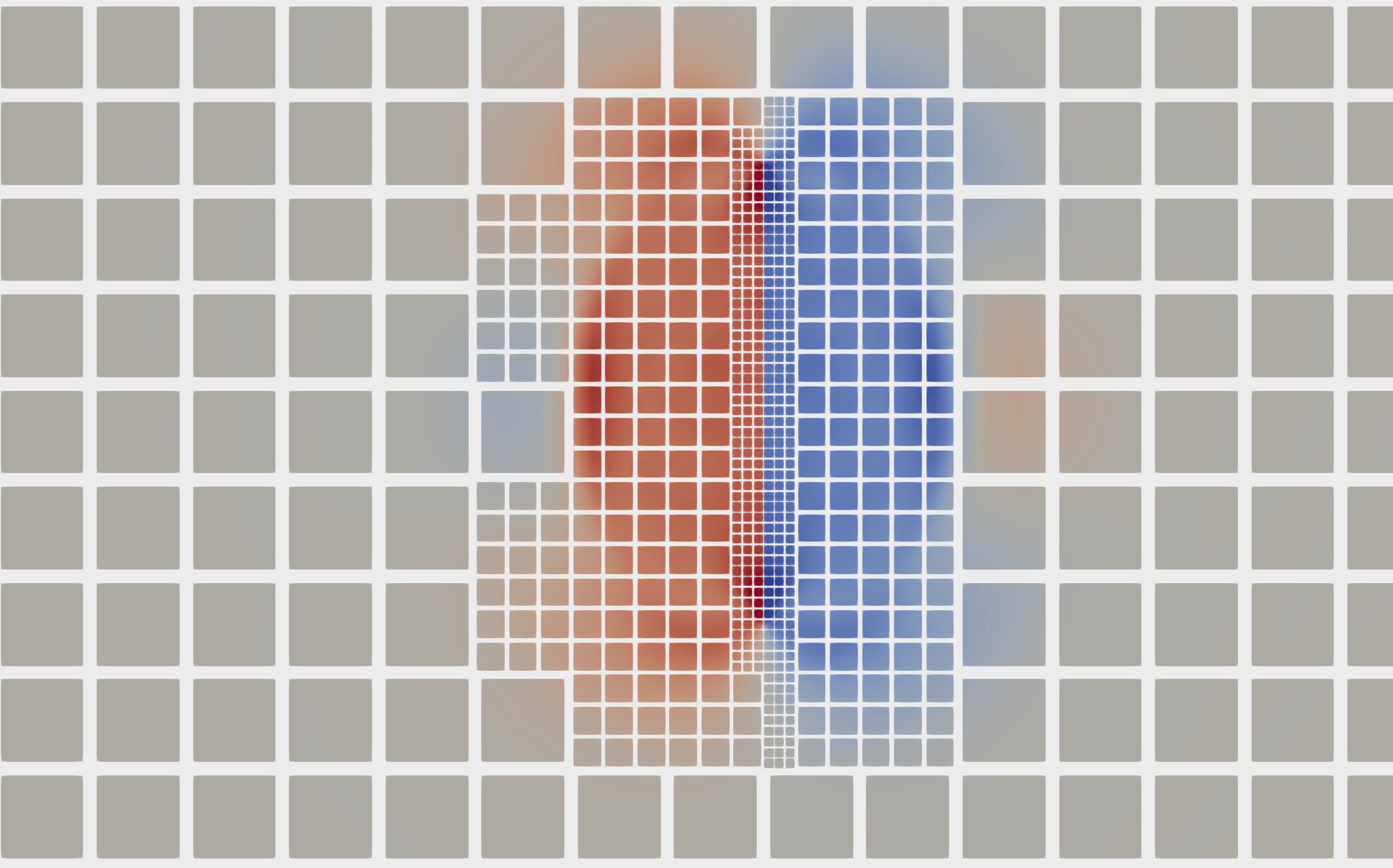} 
\end{center}
\caption{A snapshot of the particle velocity $v_y$  at $t = 1~$s zoomed closer to the fault, showing multiple levels of mesh refinement. }
\label{fig:adaptive_mesh_refinement2}
 \end{figure}
\section{Summary and outlook}
We have developed a new DG method approximation of the linear elastic wave equation incorporating physical interface and boundary conditions acting at element boundaries. 
Our original idea is to use friction to glue DG elements together, in an elastic solid, in a provably stable manner. Thus, all DG inter-element interfaces are frictional interfaces with associated frictional strength.  Classical inter-element interfaces where slip is not permitted have infinite frictional strength, and can never be broken by any load of finite magnitude. Other weak interfaces where frictional  slip can be accommodated have  finite nonlinear frictional strength, and are governed by a generic  nonlinear friction law \cite{Scholz1998, Rice1983,  JRiceetal_83,  Andrews1985}.  External boundaries of the domain are closed with a general linear well-posed and energy-stable   boundary conditions, modeling various geophysical phenomena.    

Our new physics based numerical flux is compatible with all well-posed boundary and interface conditions. By construction our flux implementation is upwind and yields energy identity analogous to the continuous energy estimate. To begin with, our analysis here focuses on a 1D model problem, but the results have  been extended  to multiple space dimensions and complex geometries, and will reported in our forthcoming paper. We present numerical experiments to demonstrate numerical stability, higher order accuracy and optimal convergence rate, for polynomial degree $N\le 10$. Further,  2D numerical examples are  presented to demonstrate the extension of  our method to multiple space  dimensions,  make comparisons with the Rusanov flux and to show the robustness of our method.

The code, as a Jupyter Python Notebook, for the 1D model problem, is publicly available on \emph{Seismolive} \cite{Krischer2018}\\
 ({http://seismo-live.org/}), an online educational  software for computational seismology. The method has been extended to 3D \cite{SIStag2019}, and implemented in \emph{ExaHyPE} \cite{ExaHyPE2019}, a simulation engine for hyperbolic PDEs, on adaptive Cartesian meshes, for exa-scale supercomputers. This software, \emph{ExaHyPE},  is open source: https://exahype.eu/exahype-engine.

\section*{Acknowledegments}
The work presented in this paper was enabled by funding from the European Union's Horizon 2020 research and innovation program under grant agreement No 671698 (ExaHyPE). 
\newline
{}\hfill{\includegraphics[angle=90, width=0.15\textwidth]{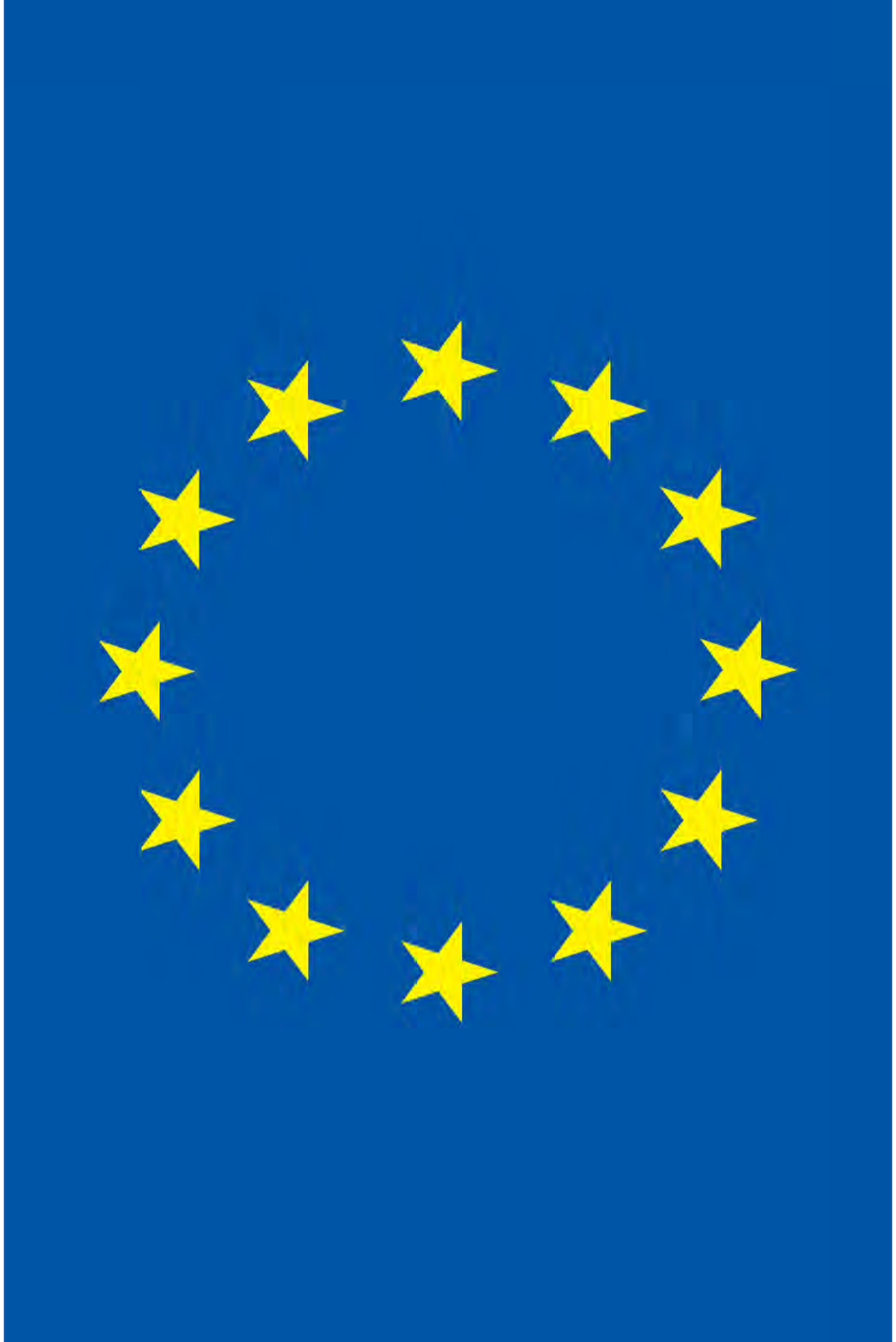}}

A.-A.G.\ acknowledges additional support by the German Research Foundation (DFG) (projects no.~KA 2281/4-1, GA 2465/2-1, GA 2465/3-1), by BaCaTec (project no.~A4) and BayLat, by KONWIHR -- the Bavarian Competence Network for Technical and Scientific High Performance Computing (project NewWave), by KAUST-CRG (GAST, grant no.~ORS-2016-CRG5-3027 and FRAGEN, grant no.~ORS-2017-CRG6 3389.02), by the European Union's Horizon 2020 research and innovation program (ChEESE, grant no.~823844).

\appendix


\end{document}